\documentclass [12pt]{amsart}
\usepackage[utf8]{inputenc}
\usepackage{amsmath}
\usepackage{amssymb}
\usepackage{hyperref}
\usepackage{amsthm}
\usepackage[table]{xcolor}    
\usepackage{graphicx,tikz}
\usetikzlibrary{calc,patterns}

\usepackage{comment}
\usepackage{amsfonts}
\usepackage[utf8]{inputenc} 
\usepackage[english]{babel}
\usepackage{bbm}
\usepackage{mathtools}
\usepackage{enumerate}
\usepackage{tikz-cd}
\usepackage{tikz-3dplot}
\usepackage{mathrsfs}
\usetikzlibrary{arrows}
\usetikzlibrary{shapes,snakes}

\makeatletter
\newtheorem*{rep@theorem}{\rep@title}
\newcommand{\newreptheorem}[2]{%
\newenvironment{rep#1}[1]{%
 \def\rep@title{#2 \ref{##1}}%
 \begin{rep@theorem}}%
 {\end{rep@theorem}}}
\makeatother

\newtheorem{theorem}{Theorem}[section]
\newtheorem{lemma}[theorem]{Lemma}
\newtheorem{proposition}[theorem]{Proposition}
\newtheorem{corollary}[theorem]{Corollary}
\theoremstyle{definition}
\newtheorem{definition}[theorem]{Definition}
\theoremstyle{definition}
\newtheorem{example}[theorem]{Example}
\newtheorem{remark}[theorem]{Remark}

\newreptheorem{theorem}{Theorem}
\newreptheorem{lemma}{Lemma}
\newreptheorem{corollary}{Corollary}
\newreptheorem{proposition}{Proposition}
\newreptheorem{conjecture}{Conjecture}

\newenvironment{claim}[1]{\par\noindent\underline{Claim:}\space#1}{}
\newenvironment{claimproof}[1]{\par\noindent\underline{Proof:}\space#1}{\leavevmode\unskip\penalty9999 \hbox{}\nobreak\hfill\quad\hbox{$\blacksquare$}}

\def\R{\mathbb{R}}

\def\Z{\mathbb{Z}}

\def\<{{\langle}}
\def\>{{\rangle}}
\def\l{{\lambda}}
\def\m{{\mu}}

\def\Bcal{{\mathcal{B}}}

\def\Dcal{{\mathcal{D}}}

\def\Fcal{{\mathcal{F}}}

\def\Fix{{ \operatorname{Fix}}}

\newcommand\parr[1]{^{({#1})}}

\begin{document}

\title{Plabic graphs and zonotopal tilings}
\author{Pavel Galashin}
\email{galashin@mit.edu}

\begin{abstract}
We say that two sets $S,T\subset\{1,2,\dots,n\}$ are {\it chord separated} if there does not exist a cyclically ordered quadruple $a,b,c,d$ of integers satisfying $a,c\in S-T$ and $b,d\in T-S$. This is a weaker version of Leclerc and Zelevinsky's {\it weak separation}. We show that every maximal {\it by inclusion} collection of pairwise chord separated sets is also maximal {\it by size}. Moreover, we prove that such collections are precisely vertex label collections of fine zonotopal tilings of the three-dimensional cyclic zonotope. In our construction, plabic graphs and square moves appear naturally as horizontal sections of zonotopal tilings and their mutations respectively. 
\end{abstract}

\address{Department of Mathematics, Massachusetts Institute of Technology, Cambridge, MA, 02139, USA.}
\keywords{Zonotopal tilings, weak separation, purity phenomenon, plabic graphs, higher Bruhat order}
\subjclass[2010]{52C22(primary), 05E99(secondary)} 

\date\today
\maketitle

\def\WS{\Dcal}
\def\PC{\mathbf{A}}
\def\VC{ \mathbf{V}}
\def\Zon{\Zcal}
\def\Tiling{\Delta}
\def\PTiling{\Sigma}
\def\TPTiling{\widetilde{\PTiling}}
\def\TPTFamily{\TPTiling_*}
\def\Face{\tau}
\def\v{\mathbf{v}}
\def\x{\mathbf{x}}
\def\Hyp{H}
\def\rk{ \operatorname{rk}}
\def\cork{ \operatorname{cork}}
\def\codim{ \operatorname{codim}}
\def\conv{ \operatorname{Conv}}
\def\span{ \operatorname{Span}}
\def\Vert{ \operatorname{Vert}}
\def\ipar{{(i)}}
\def\epar{{(e)}}
\def\UP{ \operatorname{UP}}
\def\DOWN{ \operatorname{DOWN}}
\def\proj{ \operatorname{proj}}
\def\maxgap{ \operatorname{maxgap}}
\def\IC{ \operatorname{IC}}

\def\WhCl{{\Wcal}}
\def\BlCl{{\Bcal}}
\def\Bound{{\partial}}

\def\Mcal{{\mathcal{M}}}
\def\Mcalu{{\underline{\mathcal{M}}}}
\def\Mcaltilde{\widetilde{\mathcal{M}}}
\def\Lcal{{\mathcal{L}}}
\def\Tcal{{\mathcal{T}}}
\def\Ical{{\mathcal{I}}}
\def\Dcal{{\mathcal{D}}}
\def\Zcal{{\mathcal{Z}}}
\def\Wcal{{\mathcal{W}}}
\def\Bcal{{\mathcal{B}}}

\def\Cu{\underline{C}}
\def\Xu{\underline{X}}
\def\Yu{\underline{Y}}
\def\Zu{\underline{Z}}
\def\Bu{\underline{B}}

\def\Cyclic{{\bf C}}

\newcommand{\reorient}[2]{ _{\overline #1}#2}


\section{Introduction}

In 1998, Leclerc and Zelevinsky~\cite{LZ} defined the notion of \emph{weak separation} while studying the $q$-deformation of the coordinate ring of the flag variety. They showed that for two subsets $S$ and $T$ of the set $[n]:=\{1,2,\dots,n\}$, the corresponding quantum flag minors quasicommute if and only if $S$ and $T$ are \emph{weakly separated} which is a certain natural combinatorial condition that we recall below. They raised an exciting \emph{purity conjecture}: every maximal \emph{by inclusion} collection of pairwise weakly separated subsets of $[n]$ is also maximal \emph{by size}. This conjecture has been proven independently by Danilov-Karzanov-Koshevoy~\cite{DKK10,DKK14} and by Oh-Postnikov-Speyer~\cite{OPS}. In particular, Oh, Postnikov, and Speyer showed that maximal by inclusion weakly separated collections of $k$-element sets correspond to reduced \emph{plabic graphs} (see Figure~\ref{fig:strands} for an example). Plabic graphs have been introduced by Postnikov in~\cite{Postnikov} where he used them to construct a certain CW decomposition of the totally nonnegative Grassmannian. Since then it was discovered that plabic graphs have important connections to various fields such as scattering amplitudes~\cite{AHBCGPT}, soliton solutions to the KP equation, and cluster algebras~\cite{KW1,KW2}. It was shown earlier by Scott~\cite{Scott,Scott2} that maximal by inclusion weakly separated collections of subsets form clusters in the cluster algebra structure on the coordinate ring of the Grassmannian.

Let us recall the definition of \emph{weak separation} from~\cite{LZ}. Given two subsets $S,T$ of $[n]$, we say that $S$ \emph{surrounds} $T$ if there do not exist numbers $i<j<k\in [n]$ such that $i,k\in T-S$ and $j\in S-T$. We say that $S$ and $T$ are \emph{strongly separated} if $S$ surrounds $T$ and $T$ surrounds $S$. We say that $S$ and $T$ are \emph{weakly separated} if at least one of the following holds:
\begin{enumerate}
 \item $|S|\leq |T|$ and $S$ surrounds $T$;
 \item $|T|\leq |S|$ and $T$ surrounds $S$.
\end{enumerate}
This definition simplifies considerably when $|S|=|T|$. Namely, let us say that $S$ and $T$ are \emph{chord separated} if  there do not exist numbers $a<b<c<d\in [n]$ such that $a,c\in S-T$ and $b,d\in T-S$ or vice versa. In other words, $S$ and $T$ are chord separated if either $S$ surrounds $T$ or $T$ surrounds $S$. One easily observes that when $|S|=|T|$, the two sets are weakly separated if and only if they are chord separated. However, for general $S$ and $T$ our notion of chord separation is weaker than that of weak separation. 

%
We say that a collection $\WS\subset 2^{[n]}$ of subsets of $[n]$ is \emph{weakly separated} if any two sets $S,T\in\WS$ are weakly separated from each other. We can now state the purity phenomenon for weak separation:
\begin{theorem}[see \cite{OPS,DKK10,DKK14}]
\label{thm:purity_known}
 \begin{enumerate}
 \item Every maximal \emph{by inclusion} weakly separated collection $\WS\subset 2^{[n]}$ has size 
 \[{n\choose 2}+{n\choose 1}+{n\choose 0}.\]
 \item\label{item:nchoosek} Every maximal \emph{by inclusion} weakly separated collection $\WS\subset {[n]\choose k}$ has size 
 \[k(n-k)+1.\]
\end{enumerate}
Here ${[n]\choose k}$ denotes the collection of all $k$-element subsets of $[n]$.
\end{theorem}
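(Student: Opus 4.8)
The plan is to treat part (2) first, exploiting the observation made above that on $\binom{[n]}{k}$ weak separation and chord separation coincide, and then to bootstrap part (1). For the uniform case I would realize a weakly separated collection $\WS\subset\binom{[n]}{k}$ geometrically: place each $k$-set $S$ at the point $\sum_{i\in S}e_i$ and project generically to the plane, arranged so that the $n$ cyclic intervals $\{i,i+1,\dots,i+k-1\}$ land on the vertices of a convex $n$-gon, and join two sets by an edge precisely when they are weakly separated and differ by a single swap ($|S\triangle T|=2$). The claim to establish is that a maximal-by-inclusion $\WS$ produces, via its white and black cliques, a \emph{plabic tiling}: a subdivision of this $n$-gon into polygonal $2$-cells whose vertex set is exactly $\WS$. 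Granting this, the size of $\WS$ is just the number of vertices of the subdivision, and the constant is pinned down by exhibiting one explicit maximal collection of size $k(n-k)+1$.

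The key steps, in order, are: (i) show that this adjacency graph of $\WS$ embeds planarly with the $n$ cyclic intervals on the boundary; (ii) prove the crucial \emph{fineness lemma} — maximality forces the subdivision to have no ``empty'' quadrilateral cells, so that every $2$-cell is a minimal black or white clique; and (iii) run Euler's formula $V-E+F=2$ on the resulting cell complex, using that the boundary is an $n$-gon and that the rigid local structure of black/white cliques controls the edge and face counts, to solve for $V=k(n-k)+1$. Step (ii) is where I expect the real difficulty: one must show that whenever $\WS$ fails to be a fine tiling there is a square face across which a ``square move'' produces a new $k$-set weakly separated from \emph{all} of $\WS$, contradicting maximality. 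Verifying that such an inserted set is simultaneously weakly separated from every member of $\WS$, not merely from its local neighbors, is the heart of the matter and is exactly what upgrades maximal-by-inclusion to maximal-by-size.

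For part (1) I would run the same machinery one dimension up. Here sets of all cardinalities occur, so the correct geometric model is a fine \emph{zonotopal tiling} of the three-dimensional cyclic zonotope with $n$ generators on the moment curve: a maximal weakly separated collection becomes the vertex-label set of such a tiling, and a vertex count for fine tilings of this zonotope yields $\binom{n}{2}+\binom{n}{1}+\binom{n}{0}$. Alternatively one can reduce directly to the uniform case by homogenizing — padding each $S\subseteq[n]$ with an initial segment of $\{n+1,\dots,2n\}$ up to a fixed cardinality, checking that this preserves weak separation, and transporting the count of part (2) — though verifying that maximal collections correspond under this map requires care. In either route the obstacle is the same as in (ii): establishing that maximality is equivalent to fineness of the associated subdivision. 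Once that equivalence is in hand purity is immediate, since every fine subdivision of the fixed polytope has the same number of vertices; a complementary way to package the conclusion is to prove that any two maximal collections are connected by square moves, each of which preserves cardinality, so that a single explicit example fixes the common size.
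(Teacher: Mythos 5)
First, note that the paper does not reprove this statement: it is quoted from the literature, with part (2) derived from Theorem~\ref{thm:face_labels} (face labels of reduced plabic graphs) together with Theorem~\ref{thm:connected} (connectivity under moves), and part (1) attributed to the positroid version of purity in \cite{OPS}. Your treatment of part (2) is essentially a sketch of the Oh--Postnikov--Speyer argument (the plabic-tiling construction plus Euler's formula), and the step you flag as hard is indeed where all the work lives; one quibble is that the obstruction to overcome is not the presence of quadrilateral cells (plabic tilings legitimately have non-triangular faces, namely the non-trivial cliques) but rather that the complex built from a maximal-by-inclusion collection actually covers the whole $n$-gon with no holes.

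The genuine gap is in your first route for part (1). Fine zonotopal tilings of the three-dimensional cyclic zonotope $\Zon(n,3)$ have ${n\choose 0}+{n\choose 1}+{n\choose 2}+{n\choose 3}$ vertices (the number of independent sets of a rank-$3$ matroid on $n$ elements), and their vertex-label collections are exactly the maximal \emph{chord separated} collections in $2^{[n]}$ --- that is the main theorem of this paper, not a route to Theorem~\ref{thm:purity_known}(1). For sets of different sizes weak separation is strictly stronger than chord separation, so a maximal weakly separated collection is in general \emph{not} maximal as a chord separated collection and is not the vertex set of any fine tiling of $\Zon(n,3)$; if your identification held, it would output a count that is too large by ${n\choose 3}$. (The object whose vertex sets have size ${n\choose 0}+{n\choose 1}+{n\choose 2}$ is the \emph{two}-dimensional cyclic zonotope, but its tilings encode \emph{strongly} separated collections, which is again the wrong notion.) Your alternative route --- padding each $S\subseteq[n]$ up to a fixed cardinality inside a larger ground set and reducing to the uniform/positroid case --- is the correct one and is essentially how \cite{OPS} prove part (1); the care you note is needed (that maximality is preserved under the padding correspondence) is precisely the content of their positroid purity theorem.
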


The second statement shows that every maximal \emph{by inclusion} \underline{chord} separated collection $\WS\subset {[n]\choose k}$ of $k$-element sets is also maximal \emph{by size}. As we have already mentioned, Oh-Postnikov-Speyer showed that such collections are in a one-to-one correspondence with reduced plabic graphs (see Section~\ref{sect:plabic_tilings} for the definition). 

The second part of our story involves zonotopal tilings. Zonotopes are Minkowski sums of segments, and zonotopal tilings of a given zonotope $\Zon$ are polytopal subdivisions of $\Zon$ into smaller zonotopes. We say that a zonotopal tiling is \emph{fine} (also called \emph{tight} in the literature) if all of its zonotopes are parallelotopes. Zonotopal tilings have been a popular subject of research for a long time, and have received much attention during the past two decades in the context of the \emph{generalized Baues problem}, see~\cite{BKS}, \cite{Reiner}, and~\cite[Section~7.2]{Book}. Another famous conjecture, the Extension space conjecture, is related to zonotopal tilings via the celebrated Bohne-Dress theorem~\cite{Bohne} which gives an unexpected correspondence between zonotopal tilings of a given zonotope and one-element liftings of the associated oriented matroid. We are going to be interested in the three-dimensional cyclic zonotope $\Zon(n,3)$ that is associated to the \emph{cyclic vector configuration $\Cyclic(n,3)$}. The endpoints of vectors in $\Cyclic(n,3)$ are the vertices of a convex $n$-gon lying in the affine plane $z=1$, see Figure~\ref{fig:cyclic}. Given a fine zonotopal tiling $\Tiling$ of $\Zon(n,3)$, its vertices are naturally labeled by subsets of $[n]$ (see Figure~\ref{fig:tiling_3d}), and we denote this collection of subsets by $\Vert(\Tiling)\subset 2^{[n]}$.
 
\begin{figure}
\tdplotsetmaincoords{80}{10}
\begin{tikzpicture}[scale=2,tdplot_main_coords]
    \foreach\x in {1,2,...,6}{
      \draw[->] (0,0,0) -- ({cos(\x*60)},{sin(\x*60)},1) node[anchor=south]{$v_\x$};
      \fill[opacity=0.1,blue] (0,0,1) -- ({cos(\x*60)},{sin(\x*60)},1) -- ({cos(\x*60+60)},{sin(\x*60+60)},1)--cycle;
    }
    \def\y{1.3}
    \draw[blue] (\y,\y,1)  -- (\y,-\y,1)node[anchor=west,black]{$z=1$} -- (-\y,-\y,1) -- (-\y,\y,1)  --cycle;
\end{tikzpicture}
\caption{\label{fig:cyclic}The configuration $\Cyclic(6,3)$.} 
\end{figure}

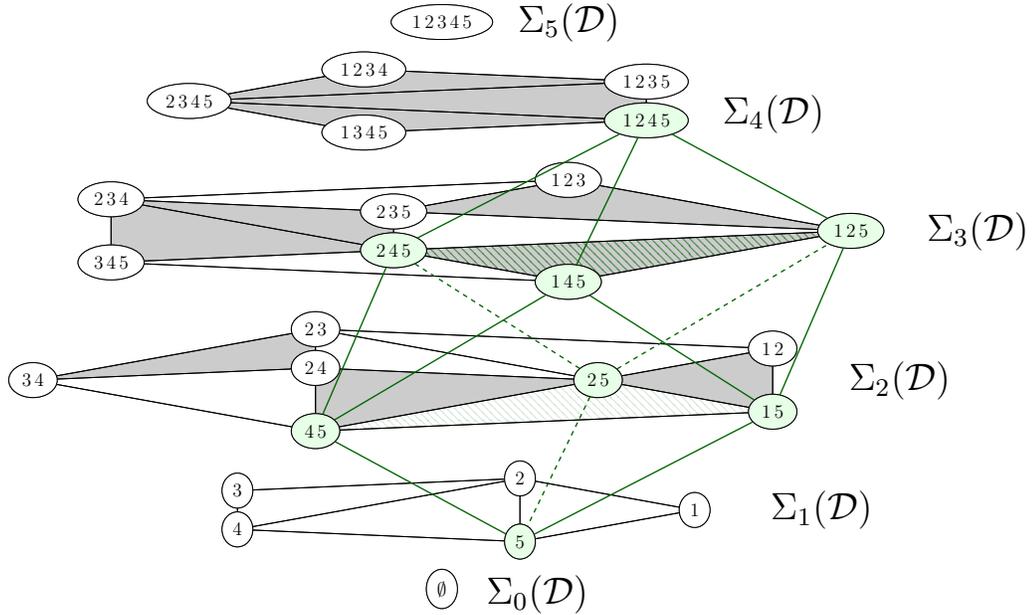
\begin{figure}\scalebox{0.6}{
\begin{tikzpicture}
\node[draw,ellipse,black,fill=white] (node0) at (0.00,0.00) {$\emptyset$};
\node[draw,ellipse,black,fill=white] (node1) at (5.60,1.75) {$1$};
\node[draw,ellipse,black,fill=white] (node2) at (1.73,2.45) {$2$};
\node[draw,ellipse,black,fill=white] (node3) at (-4.53,2.18) {$3$};
\node[draw,ellipse,black,fill=white] (node4) at (-4.53,1.32) {$4$};
\node[draw,ellipse,black,fill=white] (node5) at (1.73,1.05) {$5$};
\node[draw,ellipse,black,fill=white] (node12) at (7.33,5.33) {$1\,2$};
\node[draw,ellipse,black,fill=white] (node15) at (7.33,3.93) {$1\,5$};
\node[draw,ellipse,black,fill=white] (node23) at (-2.80,5.76) {$2\,3$};
\node[draw,ellipse,black,fill=white] (node24) at (-2.80,4.90) {$2\,4$};
\node[draw,ellipse,black,fill=white] (node25) at (3.46,4.63) {$2\,5$};
\node[draw,ellipse,black,fill=white] (node34) at (-9.06,4.63) {$3\,4$};
\node[draw,ellipse,black,fill=white] (node45) at (-2.80,3.50) {$4\,5$};
\node[draw,ellipse,black,fill=white] (node123) at (2.80,9.07) {$1\,2\,3$};
\node[draw,ellipse,black,fill=white] (node125) at (9.06,7.94) {$1\,2\,5$};
\node[draw,ellipse,black,fill=white] (node145) at (2.80,6.81) {$1\,4\,5$};
\node[draw,ellipse,black,fill=white] (node234) at (-7.33,8.64) {$2\,3\,4$};
\node[draw,ellipse,black,fill=white] (node235) at (-1.07,8.37) {$2\,3\,5$};
\node[draw,ellipse,black,fill=white] (node245) at (-1.07,7.51) {$2\,4\,5$};
\node[draw,ellipse,black,fill=white] (node345) at (-7.33,7.24) {$3\,4\,5$};
\node[draw,ellipse,black,fill=white] (node1234) at (-1.73,11.52) {$1\,2\,3\,4$};
\node[draw,ellipse,black,fill=white] (node1235) at (4.53,11.25) {$1\,2\,3\,5$};
\node[draw,ellipse,black,fill=white] (node1245) at (4.53,10.39) {$1\,2\,4\,5$};
\node[draw,ellipse,black,fill=white] (node1345) at (-1.73,10.12) {$1\,3\,4\,5$};
\node[draw,ellipse,black,fill=white] (node2345) at (-5.60,10.82) {$2\,3\,4\,5$};
\node[draw,ellipse,black,fill=white] (node12345) at (-0.00,12.57) {$1\,2\,3\,4\,5$};
\fill [opacity=0.2,black] (node12.center) -- (node15.center) -- (node25.center) -- cycle;
\fill [opacity=0.2,black] (node23.center) -- (node24.center) -- (node34.center) -- cycle;
\fill [opacity=0.2,black] (node24.center) -- (node25.center) -- (node45.center) -- cycle;
\fill [opacity=0.2,black] (node123.center) -- (node125.center) -- (node235.center) -- cycle;
\fill [opacity=0.2,black] (node125.center) -- (node145.center) -- (node245.center) -- cycle;
\fill [opacity=0.2,black] (node234.center) -- (node235.center) -- (node245.center) -- cycle;
\fill [opacity=0.2,black] (node234.center) -- (node245.center) -- (node345.center) -- cycle;
\fill [opacity=0.2,black] (node1234.center) -- (node1235.center) -- (node2345.center) -- cycle;
\fill [opacity=0.2,black] (node1235.center) -- (node1245.center) -- (node2345.center) -- cycle;
\fill [opacity=0.2,black] (node1245.center) -- (node1345.center) -- (node2345.center) -- cycle;

\fill [opacity=0.8,pattern=north west lines, pattern color=black!60!green] (node125.center) -- (node245.center) -- (node145.center) -- cycle;
\fill [opacity=0.4,pattern=north west lines, pattern color=black!60!green] (node15.center) -- (node25.center) -- (node45.center) -- cycle;

\draw[line width=0.04mm,black] (node1) -- (node2);
\draw[line width=0.04mm,black] (node1) -- (node5);
\draw[line width=0.04mm,black] (node2) -- (node1);
\draw[line width=0.04mm,black] (node2) -- (node3);
\draw[line width=0.04mm,black] (node2) -- (node4);
\draw[line width=0.04mm,black] (node2) -- (node5);
\draw[line width=0.04mm,black] (node3) -- (node2);
\draw[line width=0.04mm,black] (node3) -- (node4);
\draw[line width=0.04mm,black] (node4) -- (node2);
\draw[line width=0.04mm,black] (node4) -- (node3);
\draw[line width=0.04mm,black] (node4) -- (node5);
\draw[line width=0.04mm,black] (node5) -- (node1);
\draw[line width=0.04mm,black] (node5) -- (node2);
\draw[line width=0.04mm,black] (node5) -- (node4);
\draw[line width=0.04mm,black] (node12) -- (node15);
\draw[line width=0.04mm,black] (node12) -- (node23);
\draw[line width=0.04mm,black] (node12) -- (node25);
\draw[line width=0.04mm,black] (node15) -- (node12);
\draw[line width=0.04mm,black] (node15) -- (node25);
\draw[line width=0.04mm,black] (node15) -- (node45);
\draw[line width=0.04mm,black] (node23) -- (node12);
\draw[line width=0.04mm,black] (node23) -- (node24);
\draw[line width=0.04mm,black] (node23) -- (node25);
\draw[line width=0.04mm,black] (node23) -- (node34);
\draw[line width=0.04mm,black] (node24) -- (node23);
\draw[line width=0.04mm,black] (node24) -- (node25);
\draw[line width=0.04mm,black] (node24) -- (node34);
\draw[line width=0.04mm,black] (node24) -- (node45);
\draw[line width=0.04mm,black] (node25) -- (node12);
\draw[line width=0.04mm,black] (node25) -- (node15);
\draw[line width=0.04mm,black] (node25) -- (node23);
\draw[line width=0.04mm,black] (node25) -- (node24);
\draw[line width=0.04mm,black] (node25) -- (node45);
\draw[line width=0.04mm,black] (node34) -- (node23);
\draw[line width=0.04mm,black] (node34) -- (node24);
\draw[line width=0.04mm,black] (node34) -- (node45);
\draw[line width=0.04mm,black] (node45) -- (node15);
\draw[line width=0.04mm,black] (node45) -- (node24);
\draw[line width=0.04mm,black] (node45) -- (node25);
\draw[line width=0.04mm,black] (node45) -- (node34);
\draw[line width=0.04mm,black] (node123) -- (node125);
\draw[line width=0.04mm,black] (node123) -- (node234);
\draw[line width=0.04mm,black] (node123) -- (node235);
\draw[line width=0.04mm,black] (node125) -- (node123);
\draw[line width=0.04mm,black] (node125) -- (node145);
\draw[line width=0.04mm,black] (node125) -- (node235);
\draw[line width=0.04mm,black] (node125) -- (node245);
\draw[line width=0.04mm,black] (node145) -- (node125);
\draw[line width=0.04mm,black] (node145) -- (node245);
\draw[line width=0.04mm,black] (node145) -- (node345);
\draw[line width=0.04mm,black] (node234) -- (node123);
\draw[line width=0.04mm,black] (node234) -- (node235);
\draw[line width=0.04mm,black] (node234) -- (node245);
\draw[line width=0.04mm,black] (node234) -- (node345);
\draw[line width=0.04mm,black] (node235) -- (node123);
\draw[line width=0.04mm,black] (node235) -- (node125);
\draw[line width=0.04mm,black] (node235) -- (node234);
\draw[line width=0.04mm,black] (node235) -- (node245);
\draw[line width=0.04mm,black] (node245) -- (node125);
\draw[line width=0.04mm,black] (node245) -- (node145);
\draw[line width=0.04mm,black] (node245) -- (node234);
\draw[line width=0.04mm,black] (node245) -- (node235);
\draw[line width=0.04mm,black] (node245) -- (node345);
\draw[line width=0.04mm,black] (node345) -- (node145);
\draw[line width=0.04mm,black] (node345) -- (node234);
\draw[line width=0.04mm,black] (node345) -- (node245);
\draw[line width=0.04mm,black] (node1234) -- (node1235);
\draw[line width=0.04mm,black] (node1234) -- (node2345);
\draw[line width=0.04mm,black] (node1235) -- (node1234);
\draw[line width=0.04mm,black] (node1235) -- (node1245);
\draw[line width=0.04mm,black] (node1235) -- (node2345);
\draw[line width=0.04mm,black] (node1245) -- (node1235);
\draw[line width=0.04mm,black] (node1245) -- (node1345);
\draw[line width=0.04mm,black] (node1245) -- (node2345);
\draw[line width=0.04mm,black] (node1345) -- (node1245);
\draw[line width=0.04mm,black] (node1345) -- (node2345);
\draw[line width=0.04mm,black] (node2345) -- (node1234);
\draw[line width=0.04mm,black] (node2345) -- (node1235);
\draw[line width=0.04mm,black] (node2345) -- (node1245);
\draw[line width=0.04mm,black] (node2345) -- (node1345);
\node[draw,ellipse,black,fill=white] (node0) at (0.00,0.00) {$\emptyset$};
\node[draw,ellipse,black,fill=white] (node1) at (5.60,1.75) {$1$};
\node[draw,ellipse,black,fill=white] (node2) at (1.73,2.45) {$2$};
\node[draw,ellipse,black,fill=white] (node3) at (-4.53,2.18) {$3$};
\node[draw,ellipse,black,fill=white] (node4) at (-4.53,1.32) {$4$};
\node[draw,ellipse,black,fill=white!90!green] (node5) at (1.73,1.05) {$5$};
\node[draw,ellipse,black,fill=white] (node12) at (7.33,5.33) {$1\,2$};
\node[draw,ellipse,black,fill=white!90!green] (node15) at (7.33,3.93) {$1\,5$};
\node[draw,ellipse,black,fill=white] (node23) at (-2.80,5.76) {$2\,3$};
\node[draw,ellipse,black,fill=white] (node24) at (-2.80,4.90) {$2\,4$};
\node[draw,ellipse,black,fill=white!90!green] (node25) at (3.46,4.63) {$2\,5$};
\node[draw,ellipse,black,fill=white] (node34) at (-9.06,4.63) {$3\,4$};
\node[draw,ellipse,black,fill=white!90!green] (node45) at (-2.80,3.50) {$4\,5$};
\node[draw,ellipse,black,fill=white] (node123) at (2.80,9.07) {$1\,2\,3$};
\node[draw,ellipse,black,fill=white!90!green] (node125) at (9.06,7.94) {$1\,2\,5$};
\node[draw,ellipse,black,fill=white!90!green] (node145) at (2.80,6.81) {$1\,4\,5$};
\node[draw,ellipse,black,fill=white] (node234) at (-7.33,8.64) {$2\,3\,4$};
\node[draw,ellipse,black,fill=white] (node235) at (-1.07,8.37) {$2\,3\,5$};
\node[draw,ellipse,black,fill=white!90!green] (node245) at (-1.07,7.51) {$2\,4\,5$};
\node[draw,ellipse,black,fill=white] (node345) at (-7.33,7.24) {$3\,4\,5$};
\node[draw,ellipse,black,fill=white] (node1234) at (-1.73,11.52) {$1\,2\,3\,4$};
\node[draw,ellipse,black,fill=white] (node1235) at (4.53,11.25) {$1\,2\,3\,5$};
\node[draw,ellipse,black,fill=white!90!green] (node1245) at (4.53,10.39) {$1\,2\,4\,5$};
\node[draw,ellipse,black,fill=white] (node1345) at (-1.73,10.12) {$1\,3\,4\,5$};
\node[draw,ellipse,black,fill=white] (node2345) at (-5.60,10.82) {$2\,3\,4\,5$};
\node[draw,ellipse,black,fill=white] (node12345) at (-0.00,12.57) {$1\,2\,3\,4\,5$};
\node[scale=2,anchor=west] (SIGN_0) at (0.70,-0.12) {$\PTiling_0(\WS)$};
\node[scale=2,anchor=west] (SIGN_1) at (7.00,1.75) {$\PTiling_1(\WS)$};
\node[scale=2,anchor=north west] (SIGN_2) at (8.73,5.33) {$\PTiling_2(\WS)$};
\node[scale=2,anchor=west] (SIGN_3) at (10.46,7.94) {$\PTiling_3(\WS)$};
\node[scale=2,anchor=north west] (SIGN_4) at (5.93,11.25) {$\PTiling_4(\WS)$};
\node[scale=2,anchor=west] (SIGN_5) at (1.40,12.57) {$\PTiling_5(\WS)$};
\draw[line width=0.3mm,black!60!green,opacity=1] (node5) -- (node45) -- (node145) -- (node15) -- (node5);
\draw[line width=0.3mm,black!60!green,opacity=1] (node45) -- (node245) -- (node1245) -- (node125) -- (node15);
\draw[line width=0.3mm,black!60!green,opacity=1] (node145) -- (node1245);
\draw[line width=0.3mm,black!60!green,opacity=1,dashed] (node25) -- (node245);
\draw[line width=0.3mm,black!60!green,opacity=1,dashed] (node25) -- (node125);
\draw[line width=0.3mm,black!60!green,opacity=1,dashed] (node25) -- (node5);

\end{tikzpicture}}

 \caption{\label{fig:tiling_3d} A maximal {by inclusion} chord separated collection $\WS$ of subsets of $\{1,2,3,4,5\}$ and the corresponding zonotopal tiling $\Tiling(\WS)$ of $\Zon(5,3)$. One of the $10$ tiles is shown in green. Each horizontal section $\PTiling_i(\WS)$, $i=0,1,\dots,5,$ is a triangulation of a plabic tiling. }
\end{figure}

We are now ready to state our main result.

\begin{theorem}\label{thm:purity_positive}
\begin{itemize}
 \item Any maximal \emph{by inclusion} chord separated collection $\WS\subset 2^{[n]}$ has \emph{size}
 \[{n\choose 0}+ {n\choose 1}+{n\choose 2}+{n\choose 3}.\]
 \item Given any integer $0\leq k\leq n$, the collection $\WS\cap {[n]\choose k}$ has size $k(n-k)+1$, that is, is maximal \emph{by size} in ${[n]\choose k}$ and thus corresponds to a plabic tiling, which we denote $\PTiling_k(\WS)$.
 \item The map $\Tiling\mapsto\Vert(\Tiling)$ gives a bijection between fine zonotopal tilings of $\Zon(n,3)$ and maximal {by inclusion} chord separated collections $\WS\subset 2^{[n]}$. For any $0\leq k\leq n$, the intersection of $\Tiling$ with the plane $z=k$ in $\R^3$  gives a triangulation of $\PTiling_k(\WS)$ where $\WS=\Vert(\Tiling)$ is the corresponding chord separated collection.
\end{itemize} 
\end{theorem}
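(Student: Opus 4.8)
The plan is to organize the whole argument around the \emph{horizontal sections} of a tiling, reducing the three-dimensional statement to the two-dimensional (plabic) purity recorded in the second part of Theorem~\ref{thm:purity_known}. Since every generator of $\Cyclic(n,3)$ lies in the plane $z=1$, a vertex of a fine zonotopal tiling $\Tiling$ with label $S$ sits at height $|S|$; each edge joins labels $S$ and $S\cup\{i\}$, each $2$-face is a parallelogram on labels $S,S\cup\{i\},S\cup\{j\},S\cup\{i,j\}$, and each tile is a parallelepiped on labels $S,\dots,S\cup\{i,j,\ell\}$. First I would record that distinct vertices carry distinct labels (the label is the positive part of the corresponding sign vector of the single-element lifting). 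Then I would intersect $\Tiling$ with the plane $z=k$: a tile whose bottom vertex has height $k-1$ or $k-2$ is cut in the triangle on its three height-$k$ vertices, and a parallelogram whose bottom vertex has height $k-1$ is cut along the diagonal joining its two height-$k$ vertices. This produces a two-dimensional subdivision whose $0$-cells are exactly the labels of $\Vert(\Tiling)$ of size $k$, and I would verify it is a triangulation of a plabic tiling in the sense of Section~\ref{sect:plabic_tilings}; this is the geometric content of "plabic graphs appear as horizontal sections."

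Granting this, the second and first bullets follow quickly. Because $\Tiling$ subdivides all of $\Zon(n,3)$, each section covers the entire horizontal cross-section, so its vertex set $\Vert(\Tiling)\cap\binom{[n]}{k}$ is a maximal \emph{by inclusion} chord (equivalently, since equicardinal, weakly) separated collection of $k$-subsets; by Theorem~\ref{thm:purity_known} it has size $k(n-k)+1$. Summing over $k=0,\dots,n$ then gives $|\Vert(\Tiling)|=\sum_{k=0}^{n}\bigl(k(n-k)+1\bigr)=\binom{n}{0}+\binom{n}{1}+\binom{n}{2}+\binom{n}{3}$. The sections also show that any two labels of equal cardinality are chord separated. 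What remains for the forward direction is to prove that $\Vert(\Tiling)$ is chord separated across \emph{different} cardinalities.

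This cross-cardinality statement is the first main technical point. I would prove it as a lemma: if $S,T\in\Vert(\Tiling)$ were not chord separated, witnessed by a cyclically ordered quadruple $a,b,c,d$ with $a,c\in S-T$ and $b,d\in T-S$, then deleting the coordinates outside $\{a,b,c,d\}$ and contracting those in $S\cap T$ should carry $\Tiling$ to an induced subdivision of a small cyclic zonotope in which the images of $S$ and $T$ force a crossing incompatible with a subdivision into parallelotopes. Making precise how deletion and contraction of coordinates act on a fine zonotopal tiling, and checking that the forbidden crossing genuinely cannot occur, is where I expect real care to be needed.

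For injectivity, a tiling is recovered from $\Vert(\Tiling)$ because each section $\PTiling_k$ is determined by its vertex set via uniqueness of plabic tilings, and the stacked sections reconstitute the full subdivision. Surjectivity is the crux: given a maximal \emph{by inclusion} chord separated $\WS\subset 2^{[n]}$, I would first show each slice $\WS\cap\binom{[n]}{k}$ is maximal by inclusion among chord separated $k$-subsets, so that it is a genuine plabic tiling $\PTiling_k(\WS)$, and then glue the stacked plabic tilings into a fine zonotopal tiling of $\Zon(n,3)$. To establish both the slicewise maximality and the gluing, I would run a mutation-connectivity argument: fine zonotopal tilings of $\Zon(n,3)$ are connected under tiling \emph{flips}, and maximal chord separated collections are connected under \emph{square moves} performed on a single horizontal section, with each flip corresponding to exactly one square move. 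Matching moves and inducting from a fixed base tiling would then propagate the bijection. The main obstacle is precisely this connectivity-and-compatibility step — showing that every maximal chord separated collection is reachable by square moves from a standard one, and that each such move lifts to an honest three-dimensional flip — which is where the higher Bruhat order of the cyclic configuration and the combinatorics of single-element liftings enter and where I anticipate the bulk of the work.
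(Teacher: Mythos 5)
Your forward direction (sections of a fine tiling are triangulated plabic tilings, chord separation via restriction to a four-element subconfiguration, and the resulting count) is essentially the paper's route, but both halves of the bijection as you describe them have real gaps. For injectivity: a plabic tiling $\PTiling(\WS^\ipar)$ is indeed determined by its vertex set, but the horizontal section of a fine zonotopal tiling is a \emph{triangulated} plabic tiling, and the triangulations of the non-trivial cliques of size at least $4$ are extra data that the level-$k$ vertex set alone does not determine. So ``stacking the sections'' does not reconstitute $\Tiling$ from $\Vert(\Tiling)$ as you claim. The paper recovers the triangulation from the \emph{whole} collection: $(\v_S,\v_T)$ is an edge of $\TPTiling_i$ exactly when $S\cap T$ and $S\cup T$ both lie in $\WS$, and proving that this rule really triangulates every white and black polygon is the content of Lemma~\ref{lemma:triangulation} (the projection to $2^{[r]}$ and the $\maxgap$ argument), which has no counterpart in your proposal.

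The more serious problem is surjectivity. Your plan is to connect an arbitrary maximal by inclusion chord separated collection to a standard one by square moves and to lift each move to a flip of tilings. But square moves and flips preserve the cardinality of the collection, so \emph{any} connectivity statement among maximal by inclusion collections already presupposes that they all have the same size --- which is precisely the purity assertion you are trying to prove. A maximal by inclusion collection that were smaller than $\sum_k\bigl(k(n-k)+1\bigr)$ could not be reached by size-preserving moves from one of full size, and nothing in your argument rules this out; the circularity is not a matter of ``care'' but of the strategy. (The same issue infects your claim that a section covering the whole cross-section is automatically maximal by inclusion at that level.) The paper avoids this by a level-by-level bootstrap with no connectivity input: assuming $\WS^\ipar$ is maximal, Lemma~\ref{lemma:triangulation} produces the triangulated tiling $\TPTiling_i$, the Euler-characteristic count of Lemma~\ref{lemma:lifting} shows $\UP(\TPTiling_i)$ already has the maximal size $(i+1)(n-i-1)+1$, hence $\WS^{(i+1)}\supset\UP(\TPTiling_i)$ is maximal by size, and induction plus the two gluing claims in the proof of Proposition~\ref{prop:tilings_families} (proper intersection of the parallelotopes, reduced via Lemma~\ref{lemma:compatible_levels} to chord separation of adjacent levels, and the covering claim) assemble the zonotopal tiling. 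You would need to replace your connectivity step with an argument of this direct kind.
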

The conclusion is, triangulated plabic tilings (or dually, reduced trivalent plabic graphs) are sections of fine zonotopal tilings of the cyclic zonotope $\Zon(n,3)$. In fact, we give a simple compatibility condition on when two triangulated plabic tilings $\PTiling_k$ and $\PTiling_{k+1}$ can appear as the corresponding sections of a fine zonotopal tiling, see Definition~\ref{dfn:compatible}. Theorem~\ref{thm:purity_positive} is illustrated in Figure~\ref{fig:tiling_3d}. Namely, the size of the collection equals $26=1+5+10+10$, the sections are clearly triangulated plabic tilings, and one can easily observe that these sections together form a fine zonotopal tiling $\Tiling$: the eight vertices of any zonotope in $\Tiling$ are of the form $S,Sa,Sb,Sc,Sab,Sac,Sbc,Sabc\in\WS$. Here by $Sa$ we abbreviate $S\cup \{a\}$, etc. The vertices $Sa,Sb,Sc$ form a white triangle in the plabic tiling $\PTiling_{|S|+1}(\WS)$ and the vertices $Sab,Sbc,Sac$ form a black triangle in $\PTiling_{|S|+2}(\WS)$. For example, $\{2,23,12,25,123,235,125,1235\}$ are the vertices of one of the $10$ tiles of $\Tiling$. This tile is highlighted in green in Figure~\ref{fig:tiling_3d}.

\begin{remark}
 In~\cite[Theorem 1.6]{LZ}, Leclerc and Zelevinsky showed that all maximal by inclusion \emph{strongly separated} collections have size
 \[{n\choose 0}+{n\choose 1}+{n\choose 2}\]
 and correspond to pseudoline arrangements, or dually, to zonotopal tilings of the two-dimensional cyclic zonotope. Thus our Theorem~\ref{thm:purity_positive} is a direct three-dimensional analog of their result.
\end{remark}

In Section~\ref{sect:background}, we introduce necessary notation and review the definitions for zonotopal tilings and plabic graphs. We then discuss how plabic graphs, square moves, and strands appear as horizontal sections of their three-dimensional counterparts in Section~\ref{sect:sections}. Finally, we prove Theorem~\ref{thm:purity_positive} in Section~\ref{sect:rk_3}. 

\section*{Acknowledgment}
The author is grateful to Alex Postnikov for his numerous useful suggestions and ideas.

\section{Background}\label{sect:background}

Given a set $S$ and an element $e\not\in S$, we denote by $Se$ the union of $S$ and $\{e\}$. The use of $Se$ indicates that $e\not\in S$. By $S-T$ we denote the set-theoretic difference of sets $S$ and $T$, in particular, $S-e=S-\{e\}$. This notation does not imply that $T\subset S$ or $e\in S$. For $a,b\in [n]$, by $[a,b]$ we denote the cyclic interval $\{a,a+1,\dots,b-1,b\}$. The indices are always taken modulo $n$, so if $a>b$ then $[a,b]=[a,n]\cup[1,b]$. Similarly, $(a,b)$ denotes $[a+1,b-1]$ and $(a,b]$ denotes $[a+1,b]$, etc.

\subsection{Zonotopal tilings}

A \emph{signed subset} $X$ of $[n]$ is a pair $(X^+,X^-)$ of disjoint subsets of $[n]$. 
In this case we define its \emph{support} $\Xu:=X^+\sqcup X^-$ and $X^0:=[n]-\Xu$. We denote the set of all signed subsets of $[n]$ by $\{+,-,0\}^{[n]}$ and for each element $i\in [n]$ we write 
\[X_i=\begin{cases}
       +, &\text{ if $i\in X^+$};\\
       -, &\text{ if $i\in X^-$};\\
       0, &\text{ if $i\in X^0$}.
      \end{cases}\]

A \emph{vector configuration} $\VC=(\v_1,\dots,\v_n)$ is a finite subset of $\R^d$.

For a vector configuration $\VC$, define the corresponding \emph{zonotope} $\Zon_\VC$ to be the Minkowski sum 
\[\Zon_\VC:=[0,\v_1]+[0,\v_2]+\dots+[0,\v_n],\]
where the \emph{Minkowski sum} of two subsets $A,B\subset \R^d$ is defined by 
\[A+B=\{a+b\mid a\in A, b\in B\}\subset\R^d.\]

For a signed set $X$, we denote by $\Face_X$ the following zonotope:
\[\Face_X:=\sum_{i\in [n]}\begin{cases}
                           v_i, &\text{ if $i\in X^+$};\\
                           0, &\text{ if $i\in X^-$};\\
                           [0,v_i], &\text{ if $i\in X^0$}.                           
                          \end{cases}
\]

\begin{definition}\label{dfn:tilings_realizable}
A collection $\Tiling$ of signed subsets of $[n]$ is called a \emph{zonotopal tiling} of $\Zon_\VC$ if and only if the following two conditions hold:
\begin{itemize}
 \item $\Zon_\VC=\bigcup\limits_{X\in\Tiling}\, \Face_X$;
 \item\label{item:proper_subd} For any two $X,Y\in\Tiling$, either the intersection $\Face_X\cap\Face_Y$ is empty or there exists $Z\in\Tiling$ such that $\Face_Z$ is a face of $\Face_X$ and of $\Face_Y$, and 
 \[\Face_X\cap\Face_Y=\Face_Z.\]
\end{itemize}
\end{definition}

A zonotopal tiling $\Tiling$ is called \emph{fine} if for every $X\in\Tiling$, $|X^0|\leq d$, that is, if all the top-dimensional tiles are parallelotopes. For a fine zonotopal tiling $\Tiling$, its \emph{set of vertices} is defined as
\[\Vert(\Tiling):=\{X^+\mid X\in\Tiling\text{ such that } \Xu=[n]\}\subset 2^{[n]}.\]

For $1\leq d\leq n$, the \emph{cyclic vector configuration} $\Cyclic(n,d)$ is given by the following collection of vectors $\{\v_1,\dots,\v_n\}\subset \R^d$, where
\[\v_i=(1,x_i,\dots,x_i^{d-1})\]
and $0<x_1<x_2<\dots<x_n\in\R$ are any increasing positive real numbers. 

\subsection{Plabic graphs}\label{sect:plabic_graphs}
A \emph{planar bicolored graph} (\emph{plabic graph} for short) $G$ is a planar graph embedded in a disc so that every non-boundary vertex of $G$ is colored either black or white. Two adjacent vertices are not required to have opposite colors. Plabic graphs were introduced by Postnikov in~\cite{Postnikov} in his study of the totally nonnegative Grassmannian. 

Given a plabic graph $G$, a \emph{strand} in $G$ is a path $(v_1,v_2,\dots,v_r)$ that ``makes a sharp right turn'' at every black vertex and ``makes a sharp left turn'' at every white vertex. More precisely, for any $1<i<r$, the vertices $v_{i-1},v_i,v_{i+1}$ belong to the same face $F$ of $G$ and if $v_i$ is white (resp., black) then $v_{i-1},v_i,v_{i+1}$ go in the counterclockwise (resp., clockwise) direction in the boundary of $F$. Let $b_1,\dots,b_n$ be the boundary vertices of $G$ in a counterclockwise order. Then the \emph{strand permutation} $\pi_G$ of $[n]$ is defined by $\pi_G(i)=j$ whenever the strand that starts at $b_i$ ends at $b_j$. We denote such a strand by $i\to j$. See Figure~\ref{fig:strands}.

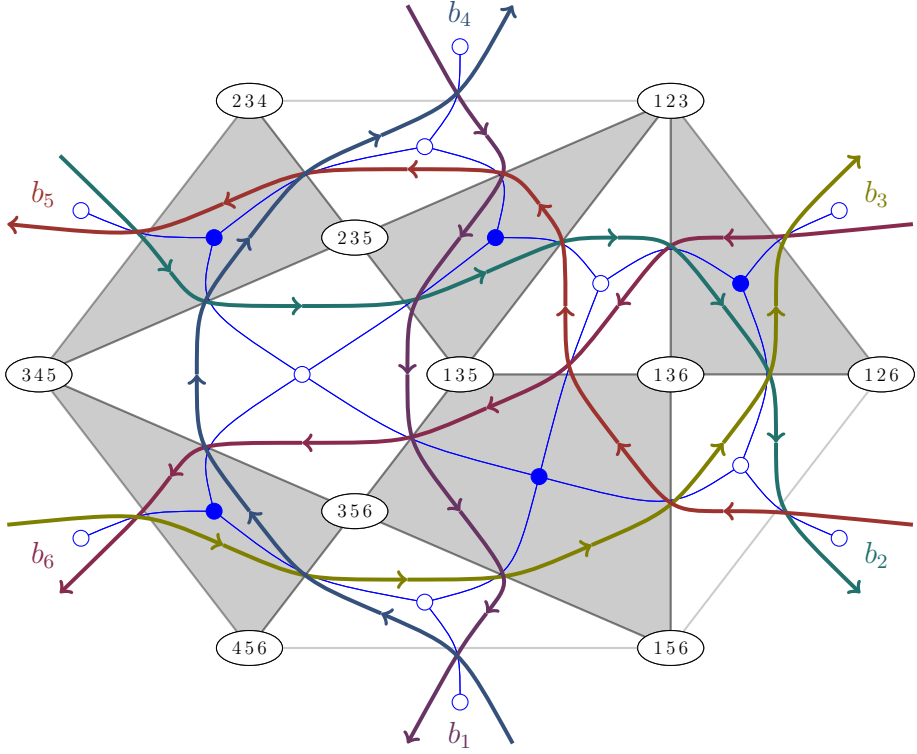
\begin{figure}

\scalebox{1.0}{
\begin{tikzpicture}[every node/.style={scale=0.6}]
\node[draw,ellipse,black,fill=white] (node123) at (2.80,3.64) {$1\,2\,3$};
\node[draw,ellipse,black,fill=white] (node126) at (5.60,0.00) {$1\,2\,6$};
\node[draw,ellipse,black,fill=white] (node135) at (-0.00,0.00) {$1\,3\,5$};
\node[draw,ellipse,black,fill=white] (node136) at (2.80,0.00) {$1\,3\,6$};
\node[draw,ellipse,black,fill=white] (node156) at (2.80,-3.64) {$1\,5\,6$};
\node[draw,ellipse,black,fill=white] (node234) at (-2.80,3.64) {$2\,3\,4$};
\node[draw,ellipse,black,fill=white] (node235) at (-1.40,1.82) {$2\,3\,5$};
\node[draw,ellipse,black,fill=white] (node345) at (-5.60,0.00) {$3\,4\,5$};
\node[draw,ellipse,black,fill=white] (node356) at (-1.40,-1.82) {$3\,5\,6$};
\node[draw,ellipse,black,fill=white] (node456) at (-2.80,-3.64) {$4\,5\,6$};
\draw [opacity=0.2,fill=black,black,thick] (node123.center)-- (node135.center)-- (node235.center) -- cycle;
\draw [opacity=0.2,fill=black,black,thick] (node123.center)-- (node126.center)-- (node136.center) -- cycle;
\draw [opacity=0.2,fill=black,black,thick] (node135.center)-- (node136.center)-- (node156.center)-- (node356.center) -- cycle;
\draw [opacity=0.2,fill=black,black,thick] (node234.center)-- (node235.center)-- (node345.center) -- cycle;
\draw [opacity=0.2,fill=black,black,thick] (node345.center)-- (node356.center)-- (node456.center) -- cycle;
\draw [opacity=0.2,black,thick] (node123.center)-- (node135.center)-- (node235.center) -- cycle;
\draw [opacity=0.2,black,thick] (node123.center)-- (node126.center)-- (node136.center) -- cycle;
\draw [opacity=0.2,black,thick] (node135.center)-- (node136.center)-- (node156.center)-- (node356.center) -- cycle;
\draw [opacity=0.2,black,thick] (node234.center)-- (node235.center)-- (node345.center) -- cycle;
\draw [opacity=0.2,black,thick] (node345.center)-- (node356.center)-- (node456.center) -- cycle;
\draw [opacity=0.2,black,thick] (node123.center)-- (node135.center)-- (node136.center) -- cycle;
\draw [opacity=0.2,black,thick] (node126.center)-- (node136.center)-- (node156.center) -- cycle;
\draw [opacity=0.2,black,thick] (node123.center)-- (node234.center)-- (node235.center) -- cycle;
\draw [opacity=0.2,black,thick] (node135.center)-- (node235.center)-- (node345.center)-- (node356.center) -- cycle;
\draw [opacity=0.2,black,thick] (node156.center)-- (node356.center)-- (node456.center) -- cycle;
\coordinate (node12) at (5.04,2.18);
\coordinate (node13) at (1.87,1.21);
\coordinate (node16) at (3.73,-1.21);
\coordinate (node23) at (-0.47,3.03);
\coordinate (node34) at (-5.04,2.18);
\coordinate (node35) at (-2.10,0.00);
\coordinate (node45) at (-5.04,-2.18);
\coordinate (node56) at (-0.47,-3.03);
\coordinate (node1234) at (0.00,4.36);
\coordinate (node1235) at (0.47,1.82);
\coordinate (node1236) at (3.73,1.21);
\coordinate (node1256) at (5.04,-2.18);
\coordinate (node1356) at (1.05,-1.36);
\coordinate (node1456) at (-0.00,-4.36);
\coordinate (node2345) at (-3.27,1.82);
\coordinate (node3456) at (-3.27,-1.82);
\draw[blue] (5.04,2.18).. controls (4.20,1.82) .. (3.73,1.21);
\draw[blue] (1.87,1.21).. controls (1.40,1.82) .. (0.47,1.82);
\draw[blue] (1.87,1.21).. controls (1.40,0.00) .. (1.05,-1.36);
\draw[blue] (1.87,1.21).. controls (2.80,1.82) .. (3.73,1.21);
\draw[blue] (3.73,-1.21).. controls (4.20,0.00) .. (3.73,1.21);
\draw[blue] (3.73,-1.21).. controls (2.80,-1.82) .. (1.05,-1.36);
\draw[blue] (3.73,-1.21).. controls (4.20,-1.82) .. (5.04,-2.18);
\draw[blue] (-0.47,3.03).. controls (0.00,3.64) .. (0.00,4.36);
\draw[blue] (-0.47,3.03).. controls (-2.10,2.73) .. (-3.27,1.82);
\draw[blue] (-0.47,3.03).. controls (0.70,2.73) .. (0.47,1.82);
\draw[blue] (-5.04,2.18).. controls (-4.20,1.82) .. (-3.27,1.82);
\draw[blue] (-2.10,0.00).. controls (-0.70,0.91) .. (0.47,1.82);
\draw[blue] (-2.10,0.00).. controls (-3.50,0.91) .. (-3.27,1.82);
\draw[blue] (-2.10,0.00).. controls (-3.50,-0.91) .. (-3.27,-1.82);
\draw[blue] (-2.10,0.00).. controls (-0.70,-0.91) .. (1.05,-1.36);
\draw[blue] (-5.04,-2.18).. controls (-4.20,-1.82) .. (-3.27,-1.82);
\draw[blue] (-0.47,-3.03).. controls (0.70,-2.73) .. (1.05,-1.36);
\draw[blue] (-0.47,-3.03).. controls (-2.10,-2.73) .. (-3.27,-1.82);
\draw[blue] (-0.47,-3.03).. controls (-0.00,-3.64) .. (-0.00,-4.36);
\draw[blue] (0.00,4.36).. controls (0.00,3.64) .. (-0.47,3.03);
\draw[blue] (0.47,1.82).. controls (1.40,1.82) .. (1.87,1.21);
\draw[blue] (0.47,1.82).. controls (-0.70,0.91) .. (-2.10,0.00);
\draw[blue] (0.47,1.82).. controls (0.70,2.73) .. (-0.47,3.03);
\draw[blue] (3.73,1.21).. controls (4.20,1.82) .. (5.04,2.18);
\draw[blue] (3.73,1.21).. controls (4.20,0.00) .. (3.73,-1.21);
\draw[blue] (3.73,1.21).. controls (2.80,1.82) .. (1.87,1.21);
\draw[blue] (5.04,-2.18).. controls (4.20,-1.82) .. (3.73,-1.21);
\draw[blue] (1.05,-1.36).. controls (1.40,0.00) .. (1.87,1.21);
\draw[blue] (1.05,-1.36).. controls (2.80,-1.82) .. (3.73,-1.21);
\draw[blue] (1.05,-1.36).. controls (0.70,-2.73) .. (-0.47,-3.03);
\draw[blue] (1.05,-1.36).. controls (-0.70,-0.91) .. (-2.10,0.00);
\draw[blue] (-0.00,-4.36).. controls (-0.00,-3.64) .. (-0.47,-3.03);
\draw[blue] (-3.27,1.82).. controls (-2.10,2.73) .. (-0.47,3.03);
\draw[blue] (-3.27,1.82).. controls (-3.50,0.91) .. (-2.10,0.00);
\draw[blue] (-3.27,1.82).. controls (-4.20,1.82) .. (-5.04,2.18);
\draw[blue] (-3.27,-1.82).. controls (-3.50,-0.91) .. (-2.10,0.00);
\draw[blue] (-3.27,-1.82).. controls (-2.10,-2.73) .. (-0.47,-3.03);
\draw[blue] (-3.27,-1.82).. controls (-4.20,-1.82) .. (-5.04,-2.18);
\draw[->,color={rgb:red,204;green,62;blue,115},line width=1.5pt] (6.02,2.00).. controls (4.20,1.82) .. (3.50,1.82);
\draw[->,color={rgb:red,204;green,62;blue,115},line width=1.5pt] (3.50,1.82).. controls (2.80,1.82) .. (2.10,0.91);
\draw[->,color={rgb:red,204;green,62;blue,115},line width=1.5pt] (2.10,0.91).. controls (1.40,0.00) .. (0.35,-0.45);
\draw[->,color={rgb:red,204;green,62;blue,115},line width=1.5pt] (0.35,-0.45).. controls (-0.70,-0.91) .. (-2.10,-0.91);
\draw[->,color={rgb:red,204;green,62;blue,115},line width=1.5pt] (-2.10,-0.91).. controls (-3.50,-0.91) .. (-3.85,-1.36);
\draw[->,color={rgb:red,204;green,62;blue,115},line width=1.5pt] (-3.85,-1.36).. controls (-4.20,-1.82) .. (-5.32,-2.91);
\node[scale=1.6,color={rgb:red,204;green,62;blue,115}] (bound6) at (-5.54,-2.40) {$b_6$};
\draw[->,color={rgb:red,38;green,129;blue,125},line width=1.5pt] (-5.32,2.91).. controls (-4.20,1.82) .. (-3.85,1.36);
\draw[->,color={rgb:red,38;green,129;blue,125},line width=1.5pt] (-3.85,1.36).. controls (-3.50,0.91) .. (-2.10,0.91);
\draw[->,color={rgb:red,38;green,129;blue,125},line width=1.5pt] (-2.10,0.91).. controls (-0.70,0.91) .. (0.35,1.36);
\draw[->,color={rgb:red,38;green,129;blue,125},line width=1.5pt] (0.35,1.36).. controls (1.40,1.82) .. (2.10,1.82);
\draw[->,color={rgb:red,38;green,129;blue,125},line width=1.5pt] (2.10,1.82).. controls (2.80,1.82) .. (3.50,0.91);
\draw[->,color={rgb:red,38;green,129;blue,125},line width=1.5pt] (3.50,0.91).. controls (4.20,0.00) .. (4.20,-0.91);
\draw[->,color={rgb:red,38;green,129;blue,125},line width=1.5pt] (4.20,-0.91).. controls (4.20,-1.82) .. (5.32,-2.91);
\node[scale=1.6,color={rgb:red,38;green,129;blue,125}] (bound2) at (5.54,-2.40) {$b_2$};
\draw[->,color={rgb:red,200;green,200;blue,0},line width=1.5pt] (-6.02,-2.00).. controls (-4.20,-1.82) .. (-3.15,-2.27);
\draw[->,color={rgb:red,200;green,200;blue,0},line width=1.5pt] (-3.15,-2.27).. controls (-2.10,-2.73) .. (-0.70,-2.73);
\draw[->,color={rgb:red,200;green,200;blue,0},line width=1.5pt] (-0.70,-2.73).. controls (0.70,-2.73) .. (1.75,-2.27);
\draw[->,color={rgb:red,200;green,200;blue,0},line width=1.5pt] (1.75,-2.27).. controls (2.80,-1.82) .. (3.50,-0.91);
\draw[->,color={rgb:red,200;green,200;blue,0},line width=1.5pt] (3.50,-0.91).. controls (4.20,0.00) .. (4.20,0.91);
\draw[->,color={rgb:red,200;green,200;blue,0},line width=1.5pt] (4.20,0.91).. controls (4.20,1.82) .. (5.32,2.91);
\node[scale=1.6,color={rgb:red,200;green,200;blue,0}] (bound3) at (5.54,2.40) {$b_3$};
\draw[->,color={rgb:red,170;green,85;blue,162},line width=1.5pt] (-0.70,4.91).. controls (0.00,3.64) .. (0.35,3.18);
\draw[->,color={rgb:red,170;green,85;blue,162},line width=1.5pt] (0.35,3.18).. controls (0.70,2.73) .. (-0.00,1.82);
\draw[->,color={rgb:red,170;green,85;blue,162},line width=1.5pt] (-0.00,1.82).. controls (-0.70,0.91) .. (-0.70,0.00);
\draw[->,color={rgb:red,170;green,85;blue,162},line width=1.5pt] (-0.70,0.00).. controls (-0.70,-0.91) .. (-0.00,-1.82);
\draw[->,color={rgb:red,170;green,85;blue,162},line width=1.5pt] (-0.00,-1.82).. controls (0.70,-2.73) .. (0.35,-3.18);
\draw[->,color={rgb:red,170;green,85;blue,162},line width=1.5pt] (0.35,-3.18).. controls (-0.00,-3.64) .. (-0.70,-4.91);
\node[scale=1.6,color={rgb:red,170;green,85;blue,162}] (bound1) at (-0.00,-4.80) {$b_1$};
\draw[->,color={rgb:red,239;green,77;blue,71},line width=1.5pt] (6.02,-2.00).. controls (4.20,-1.82) .. (3.50,-1.82);
\draw[->,color={rgb:red,239;green,77;blue,71},line width=1.5pt] (3.50,-1.82).. controls (2.80,-1.82) .. (2.10,-0.91);
\draw[->,color={rgb:red,239;green,77;blue,71},line width=1.5pt] (2.10,-0.91).. controls (1.40,0.00) .. (1.40,0.91);
\draw[->,color={rgb:red,239;green,77;blue,71},line width=1.5pt] (1.40,0.91).. controls (1.40,1.82) .. (1.05,2.27);
\draw[->,color={rgb:red,239;green,77;blue,71},line width=1.5pt] (1.05,2.27).. controls (0.70,2.73) .. (-0.70,2.73);
\draw[->,color={rgb:red,239;green,77;blue,71},line width=1.5pt] (-0.70,2.73).. controls (-2.10,2.73) .. (-3.15,2.27);
\draw[->,color={rgb:red,239;green,77;blue,71},line width=1.5pt] (-3.15,2.27).. controls (-4.20,1.82) .. (-6.02,2.00);
\node[scale=1.6,color={rgb:red,239;green,77;blue,71}] (bound5) at (-5.54,2.40) {$b_5$};
\draw[->,color={rgb:red,76;green,117;blue,179},line width=1.5pt] (0.70,-4.91).. controls (-0.00,-3.64) .. (-1.05,-3.18);
\draw[->,color={rgb:red,76;green,117;blue,179},line width=1.5pt] (-1.05,-3.18).. controls (-2.10,-2.73) .. (-2.80,-1.82);
\draw[->,color={rgb:red,76;green,117;blue,179},line width=1.5pt] (-2.80,-1.82).. controls (-3.50,-0.91) .. (-3.50,0.00);
\draw[->,color={rgb:red,76;green,117;blue,179},line width=1.5pt] (-3.50,0.00).. controls (-3.50,0.91) .. (-2.80,1.82);
\draw[->,color={rgb:red,76;green,117;blue,179},line width=1.5pt] (-2.80,1.82).. controls (-2.10,2.73) .. (-1.05,3.18);
\draw[->,color={rgb:red,76;green,117;blue,179},line width=1.5pt] (-1.05,3.18).. controls (0.00,3.64) .. (0.70,4.91);
\node[scale=1.6,color={rgb:red,76;green,117;blue,179}] (bound4) at (0.00,4.80) {$b_4$};
\draw[blue,fill=white] (node12) circle (3pt);
\draw[blue,fill=white] (node13) circle (3pt);
\draw[blue,fill=white] (node16) circle (3pt);
\draw[blue,fill=white] (node23) circle (3pt);
\draw[blue,fill=white] (node34) circle (3pt);
\draw[blue,fill=white] (node35) circle (3pt);
\draw[blue,fill=white] (node45) circle (3pt);
\draw[blue,fill=white] (node56) circle (3pt);
\draw[blue,fill=white] (node1234) circle (3pt);
\draw[blue,fill=blue] (node1235) circle (3pt);
\draw[blue,fill=blue] (node1236) circle (3pt);
\draw[blue,fill=white] (node1256) circle (3pt);
\draw[blue,fill=blue] (node1356) circle (3pt);
\draw[blue,fill=white] (node1456) circle (3pt);
\draw[blue,fill=blue] (node2345) circle (3pt);
\draw[blue,fill=blue] (node3456) circle (3pt);
\node[draw,ellipse,black,fill=white] (node123) at (2.80,3.64) {$1\,2\,3$};
\node[draw,ellipse,black,fill=white] (node126) at (5.60,0.00) {$1\,2\,6$};
\node[draw,ellipse,black,fill=white] (node135) at (-0.00,0.00) {$1\,3\,5$};
\node[draw,ellipse,black,fill=white] (node136) at (2.80,0.00) {$1\,3\,6$};
\node[draw,ellipse,black,fill=white] (node156) at (2.80,-3.64) {$1\,5\,6$};
\node[draw,ellipse,black,fill=white] (node234) at (-2.80,3.64) {$2\,3\,4$};
\node[draw,ellipse,black,fill=white] (node235) at (-1.40,1.82) {$2\,3\,5$};
\node[draw,ellipse,black,fill=white] (node345) at (-5.60,0.00) {$3\,4\,5$};
\node[draw,ellipse,black,fill=white] (node356) at (-1.40,-1.82) {$3\,5\,6$};
\node[draw,ellipse,black,fill=white] (node456) at (-2.80,-3.64) {$4\,5\,6$};
\end{tikzpicture}}

\caption{\label{fig:strands} A plabic graph $G$ (blue). For each $j\in [n]$, the faces whose labels contain $j$ are to the left of the strand $i\to j$. The collection of face labels is a maximal {by inclusion} weakly separated collection in $[n]\choose k$. The dual of $G$ is a plabic tiling from Section~\ref{sect:plabic_tilings}.}
 
\end{figure}

We say that two strands have an \emph{essential intersection} if there is an edge that they traverse in opposite directions. We say that two strands have a \emph{bad double crossing} if they have essential intersections at two edges $e_1$ and $e_2$ and each of the strands first passes through $e_1$ and then through $e_2$. 

\begin{definition}\label{dfn:plabic}
 A plabic graph $G$ is called \emph{reduced} (see \cite[Theorem~13.2]{Postnikov}) if
\begin{itemize}
 \item there are no closed strands in $G$;
 \item no strand in $G$ has an essential self-intersection;
 \item no two strands in $G$ have a bad double crossing;
 \item if $\pi_G(i)=i$ then $G$ has a boundary leaf attached to $b_i$.
\end{itemize}
\end{definition}

It follows from this definition that a plabic graph cannot have loops or parallel edges. We will additionally assume that it has no non-boundary vertices of degree two.

We will from now on restrict our attention to reduced plabic graphs whose strand permutation sends $i$ to $i+k$ modulo $n$, for all $i\in [n]$. We denote this permutation by $\sigma\parr{k,n}$. In the stratification of the totally nonnegative Grassmannian from~\cite{Postnikov}, $\sigma\parr{k,n}$ corresponds to the top-dimensional cell of $ \operatorname{Gr}_{k,n}^{ \operatorname{tnn}}$. 

\begin{remark}
 According to our Theorem~\ref{thm:purity_positive} combined with the results of \cite{OPS}, reduced plabic graphs are precisely the objects dual to horizontal sections of fine zonotopal tilings of $\Zon(n,3)$. It is quite surprising that all the conditions from Definition~\ref{dfn:plabic} are somehow incorporated in the concept of a zonotopal tiling. 
\end{remark}

As it was shown in~\cite{Postnikov}, all reduced plabic graphs with the same strand permutation are connected by certain moves which we now recall. There are two kinds of moves, \emph{unicolored contraction/uncontraction moves} and \emph{square moves}, see Figure~\ref{fig:moves_plabic}. Note that in order to perform a square move, all four vertices are required to have degree $3$.

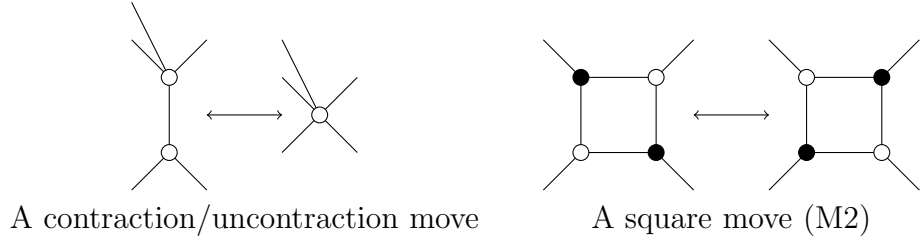
\begin{figure}
\setlength{\tabcolsep}{12pt}
\begin{tabular}{cc}
 \begin{tikzpicture}[scale=0.5]
  \coordinate(a) at (-2,1);
  \coordinate(b) at (-2,-1);
  \coordinate(c) at (2,0);
  \draw (a) -- (b);
  \draw (a) -- (-3,2);
  \draw (a) -- (-3,3);
  \draw (a) -- (-1,2);
  \draw (b) -- (-3,-2);
  \draw (b) -- (-1,-2);
  \draw[<->] (-1,0) -- (1,0);

  \draw (c) -- (1,1);
  \draw (c) -- (1,2);
  \draw (c) -- (3,1);
  \draw (c) -- (1,-1);
  \draw (c) -- (3,-1);
  
  \draw[fill=white] (a) circle (6pt);
  \draw[fill=white] (b) circle (6pt);
  \draw[fill=white] (c) circle (6pt);
 \end{tikzpicture} & 
 \begin{tikzpicture}[scale=0.5]

 \draw (-4,1) -- (-5,2);
 \draw (-4,-1) -- (-5,-2);
 \draw (-2,1) -- (-1,2);
 \draw (-2,-1) -- (-1,-2);
 \draw (4,1) -- (5,2);
 \draw (4,-1) -- (5,-2);
 \draw (2,1) -- (1,2);
 \draw (2,-1) -- (1,-2);
 \draw (-4,1) -- (-2,1) -- (-2,-1) -- (-4,-1) -- cycle;
 \draw (4,1) -- (2,1) -- (2,-1) -- (4,-1) -- cycle;
 
 \draw[<->] (-1,0) -- (1,0);

  \draw[fill=white] (-4,-1) circle (6pt);
  \draw[fill=black] (-4,1) circle (6pt);
  \draw[fill=white] (-2,1) circle (6pt);
  \draw[fill=black] (-2,-1) circle (6pt);
  \draw[fill=white] (4,-1) circle (6pt);
  \draw[fill=black] (4,1) circle (6pt);
  \draw[fill=white] (2,1) circle (6pt);
  \draw[fill=black] (2,-1) circle (6pt);
 
 \end{tikzpicture}\\
 A contraction/uncontraction move & A square move (M2)
 
\end{tabular}
\setlength{\tabcolsep}{6pt}
\caption{\label{fig:moves_plabic}Two types of moves on plabic graphs}
\end{figure}

Using contraction/uncontraction moves, one can always transform a plabic graph into a \emph{trivalent plabic graph} for which all non-boundary vertices have degree $3$. Any two trivalent plabic graphs are connected by trivalent versions of the moves from Figure~\ref{fig:moves_plabic}, namely, by square moves and contraction-uncontraction moves from Figure~\ref{fig:moves_trivalent}. We denote the square move by (M2) and the white (resp., black) trivalent contraction-uncontraction move by (M1) (resp., (M3)).

\begin{theorem}[\cite{Postnikov}]\label{thm:connected}
 All reduced plabic graphs with the same strand permutation are connected by moves from Figure~\ref{fig:moves_plabic}. All trivalent reduced plabic graphs with the same strand permutation are connected by moves (M1)-(M3).
\end{theorem}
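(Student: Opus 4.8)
The plan is to prove the two statements in order, reducing the first to the second. Since any reduced plabic graph can be made trivalent by repeatedly applying uncontraction moves — splitting a high-degree vertex into several trivalent vertices of the same color, which manifestly reroutes strands without changing their endpoints — it suffices to establish the trivalent statement and then note that a chain of contraction/uncontraction moves links each reduced graph to a trivalent one. So I would concentrate on showing that any two trivalent reduced plabic graphs with strand permutation $\sigma\parr{k,n}$ are connected by the moves (M1)--(M3) of Figure~\ref{fig:moves_plabic}.

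First I would record the easy direction: each of (M1)--(M3) leaves the strand permutation unchanged. This is a purely local check — for a contraction/uncontraction the strands merely pass through the merged or split vertex with the same boundary endpoints, and for a square move one verifies directly that the four strands traversing the square exit at the same four points as before. Thus move-equivalence implies equality of strand permutations, and the real content of the theorem is the converse.

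For the converse I would fix a canonical representative of each strand permutation. For $\sigma\parr{k,n}$ the natural choice is the grid-like plabic graph read off from the $\Gamma$-diagram of the top cell of $ \operatorname{Gr}_{k,n}^{ \operatorname{tnn}}$, equivalently the wiring-diagram-type graph attached to a fixed reduced word of the corresponding Grassmannian permutation. The key link is that braid moves $s_i s_{i+1} s_i = s_{i+1} s_i s_{i+1}$ on reduced words correspond to square moves (M2), while distant commutations $s_i s_j = s_j s_i$ with $|i-j|\ge 2$ correspond to planar isotopies together with the trivalent contraction/uncontraction moves (M1) and (M3). Since by Tits' theorem any two reduced words for the same element are linked by braid and commutation moves, all plabic graphs arising from reduced words of $\sigma\parr{k,n}$ are connected by (M1)--(M3).

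The main obstacle is the remaining step: showing that an \emph{arbitrary} trivalent reduced plabic graph $G$ with strand permutation $\sigma\parr{k,n}$ is move-equivalent to one coming from a reduced word. Here I would induct on the number of faces, using the reducedness hypotheses (no closed strands, no essential self-intersection, no bad double crossing) to locate a local configuration that can be peeled off by moves — for instance, using the absence of bad double crossings to straighten a pair of strands into a single simple crossing, producing a reduced graph with one fewer crossing that still realizes the permutation. The delicate point is guaranteeing that such a simplification is always available and that performing it preserves both reducedness and the strand permutation throughout. This is exactly where Postnikov's analysis via perfect orientations and the boundary measurement map does the heavy lifting: it certifies that two reduced graphs with the same permutation parametrize the same positroid cell and hence can be driven to a common normal form. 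I would therefore expect the cell-invariance under moves to be routine, and the normal-form reduction — maintaining control of self-intersections and double crossings along the entire move sequence — to be the genuinely hard step. (A more combinatorial alternative, passing through the duality with maximal weakly separated collections and their mutations from~\cite{OPS}, is tempting given the present context, but would risk circularity since that connectivity result is essentially equivalent to the theorem at hand.)
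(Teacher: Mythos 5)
You should first note that the paper does not prove this statement at all: it is quoted verbatim from Postnikov with the citation \cite{Postnikov}, and the only thing the paper adds is a remark that, for the special permutations $\sigma\parr{k,n}$, the trivalent version can alternatively be \emph{deduced} from the paper's main construction together with Ziegler's theorem that the higher Bruhat order $B(n,2)$ is a graded poset with unique minimal and maximal elements (fine zonotopal tilings of $\Zon(n,3)$ being the elements of $B(n,2)$ and mutations being the covering relations). So there is no in-paper argument to compare yours against; the relevant comparison is with Postnikov's original proof and with the paper's suggested alternative route.

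On its own merits, your proposal has a genuine gap exactly where you locate the difficulty. The reduction to the trivalent case, the check that moves preserve the strand permutation, and the identification of reduced-word graphs connected by braid/commutation moves are all fine. But the crucial step --- showing that an \emph{arbitrary} trivalent reduced plabic graph with strand permutation $\sigma\parr{k,n}$ is move-equivalent to the canonical one --- is not actually argued. The sentence claiming that Postnikov's boundary measurement analysis ``certifies that two reduced graphs with the same permutation parametrize the same positroid cell and hence can be driven to a common normal form'' is a non sequitur: two graphs parametrizing the same cell is a statement about the images of their boundary measurement maps, and it does not by itself produce a sequence of local moves between them. Indeed, establishing that it does is precisely the content of the theorem, and Postnikov's proof of it is a substantial induction (on the number of faces, peeling off a strand and controlling reducedness throughout), not a corollary of cell-invariance. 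Your parenthetical worry about circularity in the \cite{OPS} route is well taken, but the route you chose has the same problem in disguise. If you want a self-contained argument in the spirit of this paper, the honest options are either to reproduce Postnikov's induction in detail, or to follow the remark in the paper: realize every triangulated plabic tiling (equivalently, trivalent reduced plabic graph with permutation $\sigma\parr{k,n}$) as a horizontal section of a fine zonotopal tiling of $\Zon(n,3)$, and then invoke connectivity of such tilings under mutation via $B(n,2)$, with mutations restricting to the moves (M1)--(M3) on sections as in Figure~\ref{fig:mutation}. Note, however, that this only covers the permutations $\sigma\parr{k,n}$, whereas the theorem as stated is for arbitrary strand permutations.
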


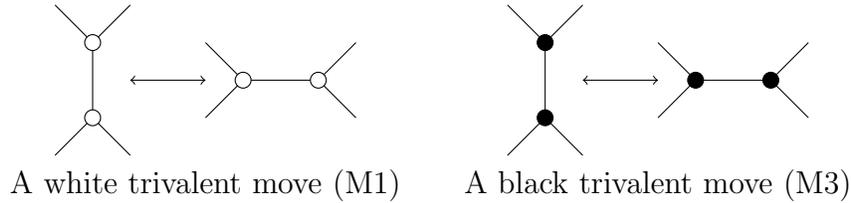
\begin{figure}
\setlength{\tabcolsep}{12pt}
\begin{tabular}{ccc}
 \begin{tikzpicture}[scale=0.5]
  \coordinate(a) at (-2,1);
  \coordinate(b) at (-2,-1);
  \coordinate(c) at (2,0);
  \coordinate(d) at (4,0);
  \draw (a) -- (b);
  \draw (c) -- (d);
  \draw (a) -- (-3,2);
  \draw (a) -- (-1,2);
  \draw (b) -- (-3,-2);
  \draw (b) -- (-1,-2);
  \draw[<->] (-1,0) -- (1,0);

  \draw (c) -- (1,1);
  \draw (c) -- (1,-1);
  \draw (d) -- (5,1);
  \draw (d) -- (5,-1);
  
  \draw[fill=white] (a) circle (6pt);
  \draw[fill=white] (b) circle (6pt);
  \draw[fill=white] (c) circle (6pt);
  \draw[fill=white] (d) circle (6pt);
 \end{tikzpicture} & 
 \begin{tikzpicture}[scale=0.5]
  \coordinate(a) at (-2,1);
  \coordinate(b) at (-2,-1);
  \coordinate(c) at (2,0);
  \coordinate(d) at (4,0);
  \draw (a) -- (b);
  \draw (c) -- (d);
  \draw (a) -- (-3,2);
  \draw (a) -- (-1,2);
  \draw (b) -- (-3,-2);
  \draw (b) -- (-1,-2);
  \draw[<->] (-1,0) -- (1,0);

  \draw (c) -- (1,1);
  \draw (c) -- (1,-1);
  \draw (d) -- (5,1);
  \draw (d) -- (5,-1);
  
  \draw[fill=black] (a) circle (6pt);
  \draw[fill=black] (b) circle (6pt);
  \draw[fill=black] (c) circle (6pt);
  \draw[fill=black] (d) circle (6pt);
 \end{tikzpicture}\\
 A white trivalent move (M1) & A black trivalent move (M3)

\end{tabular}
\setlength{\tabcolsep}{6pt}
\caption{\label{fig:moves_trivalent}Trivalent contraction-uncontraction moves}
\end{figure}

Given a reduced plabic graph $G$, one can associate to it a certain collection $\Fcal(G)\subset 2^{[n]}$ of \emph{face labels} of $G$. Namely, for each face $F$ of $G$, its \emph{label} $\l(F)\subset [n]$ contains all indices $j\in [n]$ such that $F$ is to the left of the strand $i\to j$. We set 
\[\Fcal(G)=\{\l(F)\mid F\text{ is a face of $G$}\}.\]
Clearly, if faces $F_1$ and $F_2$ share an edge then $\l(F_1)$ and $\l(F_2)$ have the same size, and thus $\Fcal(G)$ only contains sets of the same size.

\begin{theorem}[see \cite{OPS}]\label{thm:face_labels}
 For any reduced plabic graph $G$ with $\pi_G=\sigma\parr{k,n}$, the collection $\Fcal(G)$ is a maximal {by inclusion} weakly separated collection inside $[n]\choose k$, and conversely, $\WS\subset {[n]\choose k}$ is a maximal {by inclusion} weakly separated collection if and only if there is a reduced plabic graph $G$ with $\pi_G=\sigma\parr{k,n}$ and $\Fcal(G)=\WS$. 
\end{theorem}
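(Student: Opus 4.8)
The plan is to establish the two directions of the correspondence separately, leaning on the purity count of Theorem~\ref{thm:purity_known} and the move-connectivity of Theorem~\ref{thm:connected}.

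For the forward direction (that $\Fcal(G)$ is a maximal by inclusion weakly separated collection whenever $G$ is reduced with $\pi_G=\sigma\parr{k,n}$) I would proceed in three steps. First I would show that every face label has size exactly $k$: crossing a single edge of $G$ between two faces $F,F'$ changes the label by deleting one element and inserting another, since the two strands running through that edge each switch which side $F$ lies on; hence all labels share a common size, and computing the label of one boundary face directly pins that common size to $k$ because $\pi_G(i)=i+k$. Second --- and this is the technical core --- I would prove that any two labels $S,T\in\Fcal(G)$ are weakly separated. As $|S|=|T|=k$, it suffices to rule out a cyclically ordered quadruple $a,b,c,d$ with $a,c\in S-T$ and $b,d\in T-S$. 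Assuming such a quadruple exists, I would track the four strands $a-k\to a$, $b-k\to b$, $c-k\to c$, $d-k\to d$: the membership conditions force the faces $F_S$ and $F_T$ onto prescribed sides of each of these strands, and following the strands out of these two faces I would exhibit either two strands crossing twice with the same orientation (a bad double crossing) or a strand with an essential self-intersection, contradicting the reducedness clauses of Definition~\ref{dfn:plabic}. Third, with $\Fcal(G)$ now known to be weakly separated and $k$-uniform, I would count. Distinct faces of a reduced plabic graph carry distinct labels, and a reduced plabic graph with $\pi_G=\sigma\parr{k,n}$ has exactly $k(n-k)+1$ faces (Euler's relation together with Postnikov's face count for the top cell), so $|\Fcal(G)|=k(n-k)+1$. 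Since every weakly separated collection extends to a maximal by inclusion one, which by Theorem~\ref{thm:purity_known} also has size $k(n-k)+1$, the collection $\Fcal(G)$ is already maximal by inclusion.

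For the converse, given a maximal by inclusion weakly separated $\WS\subset\binom{[n]}{k}$, I would argue by connectivity under mutations. A \emph{mutation} of such a collection replaces a single set $Sac$ by $Sbd$ (with $a<b<c<d$ cyclic) whenever the four sets $Sab,Sbc,Scd,Sad$ all lie in the collection; the combinatorial input I would either cite or reprove is that any two maximal weakly separated collections in $\binom{[n]}{k}$ are connected by a sequence of such mutations. I would then match mutations to square moves: performing a square move (M2) at a square face of $G$ whose four surrounding vertices have degree $3$ changes the label of the central face exactly as a mutation changes $\WS$, while fixing all other labels and preserving the strand permutation. Fixing one explicit base collection $\WS_0$ together with a reduced plabic graph $G_0$ realizing it, I would propagate along a mutation path from $\WS_0$ to $\WS$, applying the corresponding square moves to produce a reduced $G$ with $\Fcal(G)=\WS$ and $\pi_G=\sigma\parr{k,n}$; Theorem~\ref{thm:connected} guarantees the intermediate graphs remain reduced with the prescribed permutation.

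The main obstacle I anticipate is the weak-separation step of the forward direction: converting the purely combinatorial failure of weak separation (the cyclic quadruple $a,b,c,d$) into a genuine bad double crossing of strands requires a careful case analysis of how the four relevant strands enter and leave $F_S$ and $F_T$, and it is precisely here that every clause of the reducedness definition is consumed. A secondary difficulty is supplying --- rather than merely citing --- both the mutation-connectivity of maximal weakly separated collections and the injectivity of the face-labeling used in the count.
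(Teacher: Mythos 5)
First, note that the paper does not prove Theorem~\ref{thm:face_labels} at all: it is quoted from \cite{OPS} as background, so there is no internal proof to compare yours against. Judged on its own terms, your outline has the right shape for the forward direction but contains a circularity. You deduce maximality of $\Fcal(G)$ from Theorem~\ref{thm:purity_known}; but the paper (and \cite{OPS}) derive Theorem~\ref{thm:purity_known} \emph{from} Theorem~\ref{thm:face_labels} together with Theorem~\ref{thm:connected}, so as written the argument runs in a circle. What you actually need is only the upper bound $|\Ccal|\le k(n-k)+1$ for every weakly separated $\Ccal\subset{[n]\choose k}$, which predates the purity theorem (it is due to Scott) and should be invoked in that weaker form. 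The weak-separation step itself (turning a violating cyclic quadruple into a bad double crossing) is the genuine technical core of this direction, and you have only promised the case analysis rather than carried it out; you would also need to justify, not merely assert, that distinct faces receive distinct labels and that the number of faces is exactly $k(n-k)+1$.

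Second, your converse direction outsources essentially all of the difficulty to the claim that any two maximal weakly separated collections in ${[n]\choose k}$ are connected by mutations. In \cite{OPS} that connectivity is a \emph{consequence} of Theorem~\ref{thm:face_labels} combined with Theorem~\ref{thm:connected}, not an ingredient, so citing it here is again circular unless you supply an independent proof (Danilov--Karzanov--Koshevoy do give one, by entirely different methods). The route actually taken in \cite{OPS}, and the one this paper relies on, is constructive: from a maximal weakly separated $\WS\subset{[n]\choose k}$ one builds the plabic tiling $\PTiling(\WS)$ of Section~\ref{sect:plabic_tilings}, proves it is a polygonal subdivision of the $n$-gon (Theorem~\ref{thm:plabic_OPS}), and takes its planar dual to obtain a reduced plabic graph $G$ with $\Fcal(G)=\WS$ and $\pi_G=\sigma\parr{k,n}$. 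If you want a self-contained argument, that construction is what must replace the mutation-connectivity black box; your matching of square moves with mutations is correct but only becomes usable once the bijection it is meant to prove is already available.
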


Given that all reduced plabic graphs $G$ with $\pi_G=\sigma\parr{k,n}$ are connected by moves from Figure~\ref{fig:moves_plabic} which do not change the cardinality of $\Fcal(G)$, we get that all maximal \emph{by inclusion} weakly separated collections in $[n]\choose k$ have the same size, so the purity phenomenon (Theorem~\ref{thm:purity_known}, part~\ref{item:nchoosek}) follows from Theorems~\ref{thm:face_labels} and~\ref{thm:connected}. 

\begin{remark}
The first part of Theorem~\ref{thm:purity_known} follows from an analogous result in~\cite{OPS} concerning weakly separated collections \emph{inside a positroid}. Our construction, for the most part, can be repeated for positroids and Grassmann necklaces. However we prefer to just work with plabic graphs satisfying $\pi_G=\sigma\parr{k,n}$ for simplicity.
\end{remark}

It is quite straightforward to pass from a reduced plabic graph $G$ to its collection $\Fcal(G)$ of face labels. To go in the converse direction, we need the construction of \emph{plabic tilings} by Oh, Postnikov and Speyer~\cite{OPS} which we review in the next section. Figure~\ref{fig:strands} shows a plabic graph $G$ with $\pi_G=\sigma\parr{3,6}$ together with its face labels and its dual plabic tiling.

\subsection{Plabic tilings}\label{sect:plabic_tilings}
Our goal in this section is to assign a certain polygonal subdivision $\PTiling(\WS)$ of a convex polygon to each maximal {by inclusion} weakly separated collection $\WS\subset {[n]\choose k}$ of sets of the same size.

Fix a cyclic vector configuration $\Cyclic(n,3)=(\v_1,\dots,\v_n)$ in $\R^3$. For each subset $S\subset [n]$, define the point
\[\v_S:=\sum_{i\in S} \v_i.\]

Now, consider a maximal {by inclusion} chord-separated collection $\WS\subset {[n]\choose k}$ for some $1\leq k\leq n$. For each set $K\in {[n]\choose k-1}$, define the corresponding \emph{white clique} 
\[\WhCl(K):=\{S\in\WS\mid S\supset K\}.\]
Similarly, for each set $L\in {[n]\choose k+1}$, define the corresponding \emph{black clique}
\[\BlCl(L):=\{S\in\WS\mid S\subset L\}.\]
A clique is called \emph{non-trivial} if it contains at least three elements. Every white clique has a form $\{Ka_1,K a_2,\dots K a_r\}$ for some $a_1<a_2<\dots<a_r\in[n]$. Similarly, every black clique has a form $\{L- b_1,L- b_2,\dots L- b_s,\}$ for some $b_1<b_2<\dots<b_s\in[n]$. For a non-trivial white clique $\WhCl(K)$, its \emph{boundary} $\Bound\WhCl(K)$ is the following collection of pairs of subsets in $\WS$:
\[\Bound\WhCl(K)=\{(Ka_1,Ka_2),(Ka_2,Ka_3),\dots, (Ka_{r-1},Ka_r),(Ka_r,Ka_1)\}.\]
For a non-trivial black clique $\BlCl(L)$, its \emph{boundary} $\Bound\BlCl(L)$ is the following collection of pairs of subsets in $\WS$:
\[\Bound\BlCl(L)=\{(L- b_1,L- b_2),(L- b_2,L- b_3),\dots,(L- b_{r-1},L- b_r),(L- b_r,L- b_1)\}.\]

Oh-Postnikov-Speyer define the following two-dimensional complex $\PTiling(\WS)$ embedded in $\R^3$:
\begin{itemize}
 \item The vertices of $\PTiling(\WS)$ are $\{\v_S\mid S\in\WS\}$.
 \item The edges of $\PTiling(\WS)$ connect $\v_S$ and $\v_T$ for every $(S,T)$ belonging to the boundary of a non-trivial clique;
 \item The two-dimensional faces of $\PTiling(\WS)$ are polygons whose vertices form a non-trivial clique.
\end{itemize}

The plabic tiling corresponding to the collection from Figure~\ref{fig:strands} is depicted in Figure~\ref{fig:tiling} (top). The following result is proven in \cite{OPS}:
\begin{theorem}[\cite{OPS}]\label{thm:plabic_OPS}
If $\WS\subset {[n]\choose k}$ is a maximal \emph{by inclusion} chord separated collection of subsets, then it is also maximal \emph{by size}:
\[|\WS|=k(n-k)+1.\]
Moreover, $\PTiling(\WS)$ forms a polygonal subdivision of the convex polygon with $n$ vertices labeled by cyclic intervals in $[n]$ of length $k$. 
\end{theorem}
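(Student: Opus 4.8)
The size statement requires essentially no work. Since every member of $\WS$ has exactly $k$ elements, for any $S,T\in\WS$ chord separation and weak separation coincide; thus $\WS$ is a maximal \emph{by inclusion} weakly separated collection inside $\binom{[n]}{k}$, and $|\WS|=k(n-k)+1$ is precisely Theorem~\ref{thm:purity_known}, part~\ref{item:nchoosek}. All the real content is in showing that $\PTiling(\WS)$ is an honest polygonal subdivision, so I would concentrate entirely on that.

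First I would set up the ambient polygon and check that each piece of $\PTiling(\WS)$ is individually well-behaved. Each cyclic interval $I_i=[i,i+k-1]$ is chord separated from every $k$-set $T$: the sets $I_i-T\subseteq I_i$ and $T-I_i\subseteq[n]-I_i$ sit in complementary cyclic arcs, so no chord with both endpoints in $I_i-T$ can cross a chord with both endpoints in $T-I_i$. By maximality all $n$ of these intervals lie in $\WS$, and since $\v_S=(k,\sum_{i\in S}x_i,\sum_{i\in S}x_i^2)$ for $|S|=k$, all vertices of $\PTiling(\WS)$ lie in the plane $\{$first coordinate$=k\}$, where the points $\v_{I_i}$ are the vertices of a convex $n$-gon $P:=\conv\{\v_{I_1},\dots,\v_{I_n}\}$ by the moment-curve geometry of $\Cyclic(n,3)$. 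Moreover a non-trivial white clique $\{Ka_1,\dots,Ka_r\}$ ($a_1<\dots<a_r$) maps to a translate by $\v_K$ of points $\v_{a_1},\dots,\v_{a_r}$ lying on an upward parabola, hence to the vertices, \emph{in this cyclic order}, of a convex polygon; the boundary order therefore matches $\Bound\WhCl(K)$ exactly, and the analogous (downward-parabola) computation handles $\Bound\BlCl(L)$. Thus $\PTiling(\WS)$ is a complex of genuine convex polygons.

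The heart of the argument is the \emph{local structure around each vertex} $\v_S$, and this is the step I expect to be the main obstacle. Every edge of $\PTiling(\WS)$ incident to $\v_S$ runs to some mutation $\v_{S-a+b}$ in the direction $\v_b-\v_a$, so after projecting to the two-dimensional plane these edges are totally ordered by angle. I would prove a \emph{flower lemma}: at an interior vertex the non-trivial white and black cliques containing $S$ can be arranged in a single cyclic sequence $\WhCl(K_1),\BlCl(L_1),\WhCl(K_2),\BlCl(L_2),\dots$ in which consecutive cliques share an edge through $\v_S$, and which sweeps out a punctured disk around $\v_S$ exactly once; at a frozen vertex $\v_{I_i}$ the same cliques fill a half-disk whose two extreme boundary edges run to $\v_{I_{i-1}}$ and $\v_{I_{i+1}}$. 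Weak separation is what forces the angular sequence to be consistent so that no two faces overlap and the colors alternate, while maximality is what forbids an angular gap: a gap would exhibit a $k$-set chord separated from all of $\WS$ yet not in it, contradicting maximality. The delicate part is precisely this color-alternation and gap-freeness, which demands a careful case analysis of how the three-term chord-separation condition interacts with the cyclic order of $[n]$.

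Granting the flower lemma, the conclusion is topological. The complex $\PTiling(\WS)$ is then a finite polygonal complex that is a manifold-with-boundary near every vertex — a disk at interior vertices and a half-disk at the frozen vertices — with boundary the single cycle $\v_{I_1},\v_{I_2},\dots,\v_{I_n}$ identified above, namely $\Bound P$. Hence it is a closed topological disk embedded in the plane of $P$, and a disk embedded in $P$ with boundary $\Bound P$ must equal $P$; therefore $\PTiling(\WS)$ subdivides $P$. As a consistency check one can run an Euler-characteristic count: with $V=|\WS|=k(n-k)+1$ together with the numbers of non-trivial white and black cliques dictated by the flower lemma, the alternating sum $V-E+F$ evaluates to the Euler characteristic of a disk, confirming the subdivision and, as a by-product, supplying the combinatorial data from which the dual reduced plabic graph is read off.
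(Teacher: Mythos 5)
First, a point of reference: the paper does not prove Theorem~\ref{thm:plabic_OPS} at all --- it is quoted from Oh--Postnikov--Speyer \cite{OPS} and used as a black box, so there is no internal proof to measure your argument against; the comparison has to be with the proof in \cite{OPS} itself. Within that framing, your reduction of the size statement to Theorem~\ref{thm:purity_known}(\ref{item:nchoosek}) is legitimate (for equal-size sets chord separation and weak separation coincide, as the paper notes), and your geometric preliminaries are correct: the cyclic intervals are chord separated from every $k$-set and hence lie in $\WS$ by maximality, and the translated-parabola computation does show that each non-trivial clique maps to a convex polygon whose boundary order agrees with $\Bound\WhCl(K)$, resp.\ $\Bound\BlCl(L)$. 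The closing topological step (locally a disk or half-disk, with the correct boundary cycle, implies the complex fills the $n$-gon) is also fine once the local structure is known.

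The genuine gap is that the ``flower lemma'' --- which you correctly identify as the heart of the matter --- is asserted rather than proved, and it is not a routine verification: it contains essentially all of the content of the theorem. One must show (i) that distinct faces incident to $\v_S$ occupy disjoint angular sectors, i.e.\ that the edge directions $\v_b-\v_a$ of the various cliques through $S$ interleave consistently rather than overlap; (ii) that consecutive faces in the angular order share an edge and alternate in color; and (iii) that at an interior vertex there is no angular gap. Item (iii) is the delicate one: from a hypothetical gap one must manufacture a $k$-set that is weakly separated from all of $\WS$ yet absent from it, and producing that set requires a genuine combinatorial construction (in \cite{OPS} this occupies several lemmas of careful case analysis on the cyclic order, and it is also where maximality \emph{by inclusion} rather than by size is really used). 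Saying that weak separation ``forces'' the angular consistency and that maximality ``forbids'' a gap names the right mechanisms but does not supply the argument, so as it stands the proposal is a correct architectural outline of the \cite{OPS} proof with its central lemma left open.
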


\begin{corollary}[\cite{OPS}]
 Given a reduced plabic graph $G$ with $\pi_G=\sigma\parr{k,n}$, the plabic tiling $\PTiling(\Fcal(G))$ is planar dual to $G$.
\end{corollary}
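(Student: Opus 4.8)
The plan is to establish the planar duality cell by cell: I will produce a dimension-reversing, incidence-preserving, color-matching correspondence between the cells of $G$ and those of $\PTiling(\Fcal(G))$, sending faces of $G$ to vertices of $\PTiling$, edges to edges, and internal white (resp.\ black) vertices to white (resp.\ black) two-dimensional cells. Throughout I may assume, from Theorem~\ref{thm:face_labels}, that $\WS:=\Fcal(G)$ is a maximal by inclusion weakly separated collection in $\binom{[n]}{k}$; by definition the two-cells of $\PTiling(\WS)$ are exactly the non-trivial white and black cliques, and by Theorem~\ref{thm:plabic_OPS} these fit together into an honest polygonal subdivision with $k(n-k)+1$ vertices.

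\textbf{Vertices and edges.} First I would check that the labeling $F\mapsto\l(F)$ is injective on the faces of $G$: two faces carrying the same label would force a closed strand, an essential self-intersection, or a bad double crossing, all forbidden by reducedness (Definition~\ref{dfn:plabic}). Thus $F\mapsto\v_{\l(F)}$ is a bijection from the faces of $G$ onto the vertices $\{\v_S\mid S\in\WS\}$ of $\PTiling(\WS)$. Next, every edge $e$ of $G$ is traversed by exactly two strands, and in a reduced graph these run in opposite directions along $e$; crossing $e$ therefore toggles membership of exactly the two endpoints of those strands, so the labels of the two faces adjacent to $e$ can be written $Ra$ and $Rb$, where $R=\l(F)\cap\l(F')$ has size $k-1$ and $a\neq b$. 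The pair $(Ra,Rb)$ then lies on the boundary of both the white clique $\WhCl(R)$ and the black clique $\BlCl(Rab)$, i.e.\ it is an edge of $\PTiling(\WS)$; conversely each boundary pair of a non-trivial clique arises this way. This matches the edges of $G$ with the edges of $\PTiling(\WS)$.

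\textbf{Two-cells.} The heart of the argument — and the step I expect to be the main obstacle — is the local analysis at an internal vertex. At a white vertex $v$ of degree $d$ the strands all make sharp left turns, and I would show that the $d$ faces encircling $v$, read in cyclic order, carry labels $Ka_1,Ka_2,\dots,Ka_d$ for a common $(k-1)$-set $K$ and cyclically increasing $a_1<a_2<\dots<a_d$: the strands turning at $v$ pin down all but one element of each surrounding label, and the left-turn rule forces the varying elements into cyclic order. Dually, a black vertex of degree $d$ is encircled by labels $L-b_1,\dots,L-b_d$ for a common $(k+1)$-set $L$. Hence each internal white (resp.\ black) vertex produces a non-trivial white (resp.\ black) clique, i.e.\ a two-cell of $\PTiling(\WS)$ of the matching color. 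For the converse I would use Theorem~\ref{thm:plabic_OPS}: since $\PTiling(\WS)$ is a subdivision whose two-cells are the non-trivial cliques, and the faces whose labels contain a fixed $(k-1)$-set $K$ are cyclically arranged sharing edges of $G$, planarity forces them to meet at a single vertex, so every non-trivial clique arises from a unique internal vertex of the correct color.

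\textbf{Conclusion.} Finally I would verify that incidences are preserved — an edge $(S,T)$ of $\PTiling(\WS)$ bounds the white cell $\WhCl(S\cap T)$ and the black cell $\BlCl(S\cup T)$, matching the two oppositely colored endpoints of the dual edge of $G$ — and close with an Euler-characteristic count against $|\WS|=k(n-k)+1$ to confirm that no cells are left unmatched. An alternative, perhaps slicker, route uses Theorem~\ref{thm:connected}: the duality is clear for one standard reduced graph, the contraction/uncontraction moves (M1) and (M3) leave $\Fcal(G)$ and hence $\PTiling(\Fcal(G))$ unchanged (they merely add or erase a diagonal inside a monochromatic clique polygon, which $\PTiling$ does not record), and a square move (M2) swaps a single face label and corresponds exactly to flipping the diagonal of the associated quadrilateral in $\PTiling$; induction along moves then yields the general case. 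Either way, the one genuinely delicate input is the vertex-local strand computation above.
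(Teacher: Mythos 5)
The paper does not actually prove this corollary: it is quoted from Oh--Postnikov--Speyer, so there is no internal argument to measure yours against. On its own terms, your outline is essentially the standard duality argument from \cite{OPS} --- faces of $G$ to vertices of $\PTiling(\Fcal(G))$, edges to edges, internal vertices to clique polygons, with the local strand computation at a vertex carrying the real content --- and your alternative route through Theorem~\ref{thm:connected} is also sound and arguably cleaner, since the base case and the effect of each move are easy to verify directly.

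Two places in the main route are looser than they should be. First, the edge step: you produce the pair $(Ra,Rb)$ from the two strands through an edge of $G$, but an edge of $\PTiling(\WS)$ is by definition a \emph{boundary} pair of a non-trivial clique, i.e.\ $a$ and $b$ must be cyclically consecutive among the elements $a_i$ with $Ra_i\in\WhCl(R)$; non-consecutive pairs are diagonals of the polygon, not edges of the tiling, so this adjacency requires an argument (it follows from the local analysis at the two endpoints of the edge, but you should say so, and you should also address boundary edges and unicolored edges, for which one of the two cliques may be trivial). Second, the claim that every non-trivial clique arises from a \emph{unique} internal vertex of the matching color is false as stated: if $G$ has two adjacent white trivalent vertices, the faces surrounding them form a single white clique with four or more elements, and the corresponding polygon of $\PTiling(\Fcal(G))$ is dual to the whole connected unicolored subgraph, not to one vertex. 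This is precisely why the paper introduces \emph{triangulated} plabic tilings as the duals of trivalent plabic graphs; the untriangulated tiling $\PTiling(\Fcal(G))$ is planar dual to $G$ only after all unicolored edges are contracted. Your alternative route implicitly accounts for this (you observe that (M1) and (M3) leave $\PTiling(\Fcal(G))$ unchanged), so either state the duality up to unicolored contraction or run the cell-by-cell argument for the contracted graph.
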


\section{Plabic graphs as sections of zonotopal tilings}\label{sect:sections}
Before we pass to the proof of Theorem~\ref{thm:purity_positive}, we would like to discuss informally how to view notions related to plabic graphs as sections of three-dimensional objects related to zonotopal tilings. 

Let $\Tiling$ be a fine zonotopal tiling of $\Zon(n,3)$. Let $\Tiling_i$ denote the horizontal sections of $\Tiling$ by hyperplanes $\Hyp_i:=\{(x,y,z)\mid z=i\}\subset \R^3$. First, as we have already mentioned in Theorem~\ref{thm:purity_positive}, the sections $\Tiling_i$ are triangulations of plabic tilings corresponding to maximal \emph{by inclusion} weakly separated collections inside $[n]\choose k$. Conversely, as we will see in the proof of Theorem~\ref{thm:purity_positive} in Section~\ref{sect:rk_3}, every triangulation of any plabic tiling is a horizontal section of some fine zonotopal tiling of $\Zon(n,3)$. Recall that the dual object to a triangulation of a plabic tiling is a trivalent reduced plabic graph $G$ such that $\pi_G=\sigma\parr{k,n}$. Thus we get \emph{plabic graphs satisfying $\pi_G=\sigma\parr{k,n}$ as duals of horizontal sections of fine zonotopal tilings $\Tiling$ of $\Zon(n,3)$. }

\def\PSP{{P}}

Next, the objects dual to fine zonotopal tilings of $\Zon(n,3)$ are \emph{three-dimensional pseudoplane arrangements}, which are analogous to the well-studied \emph{pseudoline arrangements} in two dimensions. Loosely speaking, given $\Tiling$ and any $j\in[n]$, we get a \emph{pseudoplane} $\PSP_j$ which is a smooth embedding of $\R^2$ (together with an orientation) into $\R^3$ that only intersects the tiles $\tau_X$ of $\Tiling$ satisfying $X_j=0$, i.e. $\PSP_j$ intersects all segments of $\Tiling$ that are parallel to $\v_j$.  In other words, the collection of vertex labels of $\Tiling$ lying on the \emph{negative} side of $\PSP_j$ consists precisely of those elements of $\Vert(\Tiling)$ that do not contain $j$. 

The formal definition of a pseudoplane arrangement is somewhat more complicated and involves other topological conditions like the intersection of $\PSP_i$ and $\PSP_j$ being homeomorphic to a line for $i\neq j\in [n]$. We will impose an extra condition that the normal to $\PSP_j$ at any point cannot be vertical for all $j\in [n]$.

\def\Rcal{ \mathcal{R}}
\def\Lcal{ \mathcal{L}}

The reason we are considering pseudoplanes is that their horizontal sections are strands in plabic graphs. Let $G$ be the plabic graph dual to $\Tiling_i$, and let $\ell=\pi_G^{-1}(j)\in [n]$ so that $G$ has a strand labeled $\ell\to j$. Then \emph{this strand $\ell\to j$ can be viewed as the intersection of $\PSP_j$ with $\Hyp_i$}. Let us explain this more rigorously. Let $\Rcal,\Lcal\subset {[n]\choose k}$ be the collections of sets in $\Vert(\Tiling_i)$ lying on the right, resp., left side of the strand $\ell\to j$. So 
\[\Rcal=\{S\in\Vert(\Tiling_i)\mid j\not\in S\};\quad \Lcal=\{T\in\Vert(\Tiling)\mid j\in T\}.\]
From this one easily observes that $\Rcal$, resp., $\Lcal$ are the collections of sets in $\Vert(\Tiling_i)$ that are on the negative, resp., positive sides of $\PSP_j$. Finally, by our assumption on the normal of $\PSP_j$ not being vertical, it follows that $\PSP_j\cap \Zon(n,3)\cap \Hyp_i$ is a simple curve that divides the polygon $\Zon(n,3)\cap\Hyp_i$ into two parts in essentially the same way as the strand $\ell\to j$.

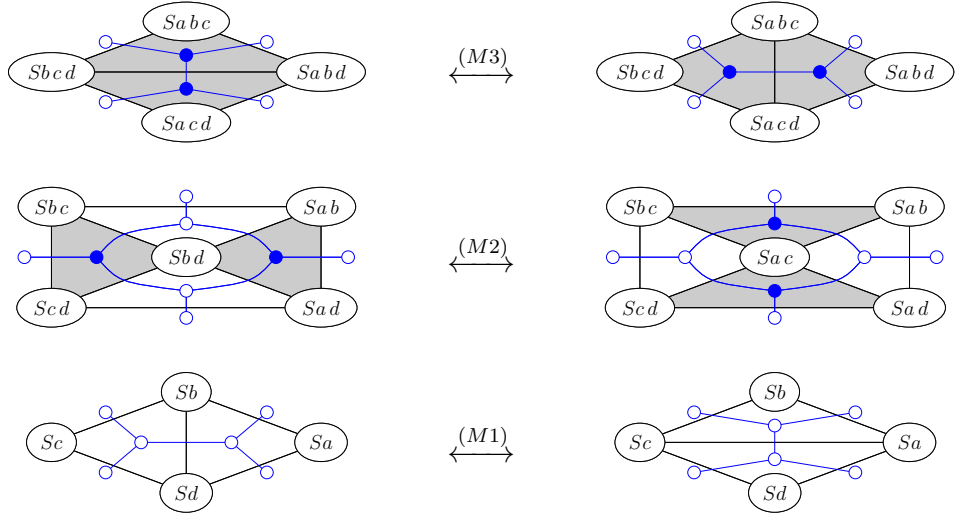
\begin{figure}
 \setlength{\tabcolsep}{12pt}

\begin{tabular}{ccc}
\scalebox{0.8}{
\begin{tikzpicture}[every node/.style={scale=0.8}]
\node[draw,ellipse,black,fill=white] (node1) at (2.24,2.24) {$Sa$};
\node[draw,ellipse,black,fill=white] (node2) at (0.00,3.08) {$Sb$};
\node[draw,ellipse,black,fill=white] (node3) at (-2.24,2.24) {$Sc$};
\node[draw,ellipse,black,fill=white] (node4) at (-0.00,1.40) {$Sd$};
\node[draw,ellipse,black,fill=white] (node12) at (2.24,6.16) {$Sa\,b$};
\node[draw,ellipse,black,fill=white] (node14) at (2.24,4.48) {$Sa\,d$};
\node[draw,ellipse,black,fill=white] (node23) at (-2.24,6.16) {$Sb\,c$};
\node[draw,ellipse,black,fill=white] (node24) at (-0.00,5.32) {$Sb\,d$};
\node[draw,ellipse,black,fill=white] (node34) at (-2.24,4.48) {$Sc\,d$};
\node[draw,ellipse,black,fill=white] (node123) at (0.00,9.24) {$Sa\,b\,c$};
\node[draw,ellipse,black,fill=white] (node124) at (2.24,8.40) {$Sa\,b\,d$};
\node[draw,ellipse,black,fill=white] (node134) at (-0.00,7.56) {$Sa\,c\,d$};
\node[draw,ellipse,black,fill=white] (node234) at (-2.24,8.40) {$Sb\,c\,d$};
\coordinate (wnode0n1) at (0.75,2.24);
\coordinate (wnode0n2) at (-0.75,2.24);
\fill [opacity=0.2,black] (node12.center) -- (node14.center) -- (node24.center) -- cycle;
\coordinate (bnode124n1) at (1.49,5.32);
\coordinate (wnode2n1) at (-0.00,5.88);
\coordinate (wnode4n1) at (-0.00,4.76);
\fill [opacity=0.2,black] (node23.center) -- (node24.center) -- (node34.center) -- cycle;
\coordinate (bnode234n1) at (-1.49,5.32);
\fill [opacity=0.2,black] (node123.center) -- (node124.center) -- (node234.center) -- cycle;
\coordinate (bnode1234n1) at (-0.00,8.68);
\fill [opacity=0.2,black] (node124.center) -- (node134.center) -- (node234.center) -- cycle;
\coordinate (bnode1234n2) at (-0.00,8.12);
\draw[line width=0.03mm,black] (node1) -- (node2);
\draw[line width=0.03mm,black] (node1) -- (node4);
\draw[line width=0.03mm,black] (node2) -- (node1);
\draw[line width=0.03mm,black] (node2) -- (node3);
\draw[line width=0.03mm,black] (node2) -- (node4);
\draw[line width=0.03mm,black] (node3) -- (node2);
\draw[line width=0.03mm,black] (node3) -- (node4);
\draw[line width=0.03mm,black] (node4) -- (node1);
\draw[line width=0.03mm,black] (node4) -- (node2);
\draw[line width=0.03mm,black] (node4) -- (node3);
\draw[line width=0.03mm,black] (node12) -- (node14);
\draw[line width=0.03mm,black] (node12) -- (node23);
\draw[line width=0.03mm,black] (node12) -- (node24);
\draw[line width=0.03mm,black] (node14) -- (node12);
\draw[line width=0.03mm,black] (node14) -- (node24);
\draw[line width=0.03mm,black] (node14) -- (node34);
\draw[line width=0.03mm,black] (node23) -- (node12);
\draw[line width=0.03mm,black] (node23) -- (node24);
\draw[line width=0.03mm,black] (node23) -- (node34);
\draw[line width=0.03mm,black] (node24) -- (node12);
\draw[line width=0.03mm,black] (node24) -- (node14);
\draw[line width=0.03mm,black] (node24) -- (node23);
\draw[line width=0.03mm,black] (node24) -- (node34);
\draw[line width=0.03mm,black] (node34) -- (node14);
\draw[line width=0.03mm,black] (node34) -- (node23);
\draw[line width=0.03mm,black] (node34) -- (node24);
\draw[line width=0.03mm,black] (node123) -- (node124);
\draw[line width=0.03mm,black] (node123) -- (node234);
\draw[line width=0.03mm,black] (node124) -- (node123);
\draw[line width=0.03mm,black] (node124) -- (node134);
\draw[line width=0.03mm,black] (node124) -- (node234);
\draw[line width=0.03mm,black] (node134) -- (node124);
\draw[line width=0.03mm,black] (node134) -- (node234);
\draw[line width=0.03mm,black] (node234) -- (node123);
\draw[line width=0.03mm,black] (node234) -- (node124);
\draw[line width=0.03mm,black] (node234) -- (node134);
\coordinate (node0) at (-0.00,2.24);
\coordinate (node12) at (1.34,2.74);
\coordinate (node14) at (1.34,1.74);
\coordinate (node23) at (-1.34,2.74);
\coordinate (node34) at (-1.34,1.74);
\draw[blue] (node12) -- (wnode0n1);
\draw[blue] (node23) -- (wnode0n2);
\draw[blue] (node34) -- (wnode0n2);
\draw[blue] (node14) -- (wnode0n1);
\draw[blue] (wnode0n1) -- (wnode0n2);
\draw[blue,fill=white] (wnode0n1) circle (3pt);
\draw[blue,fill=white] (wnode0n2) circle (3pt);
\draw[blue,fill=white] (node12) circle (3pt);
\draw[blue,fill=white] (node14) circle (3pt);
\draw[blue,fill=white] (node23) circle (3pt);
\draw[blue,fill=white] (node34) circle (3pt);
\coordinate (node1) at (2.69,5.32);
\coordinate (node2) at (-0.00,5.88);
\coordinate (node3) at (-2.69,5.32);
\coordinate (node4) at (-0.00,4.76);
\coordinate (node123) at (0.00,6.33);
\coordinate (node124) at (1.49,5.32);
\coordinate (node134) at (-0.00,4.31);
\coordinate (node234) at (-1.49,5.32);
\draw[blue] (2.69,5.32).. controls (2.24,5.32) .. (1.49,5.32);
\draw[blue] (-0.00,5.88).. controls (0.00,6.16) .. (0.00,6.33);
\draw[blue] (-0.00,5.88).. controls (-1.12,5.74) .. (-1.49,5.32);
\draw[blue] (-0.00,5.88).. controls (1.12,5.74) .. (1.49,5.32);
\draw[blue] (-2.69,5.32).. controls (-2.24,5.32) .. (-1.49,5.32);
\draw[blue] (-0.00,4.76).. controls (1.12,4.90) .. (1.49,5.32);
\draw[blue] (-0.00,4.76).. controls (-1.12,4.90) .. (-1.49,5.32);
\draw[blue] (-0.00,4.76).. controls (-0.00,4.48) .. (-0.00,4.31);
\draw[blue] (0.00,6.33).. controls (0.00,6.16) .. (-0.00,5.88);
\draw[blue] (1.49,5.32).. controls (2.24,5.32) .. (2.69,5.32);
\draw[blue] (1.49,5.32).. controls (1.12,4.90) .. (-0.00,4.76);
\draw[blue] (1.49,5.32).. controls (1.12,5.74) .. (-0.00,5.88);
\draw[blue] (-0.00,4.31).. controls (-0.00,4.48) .. (-0.00,4.76);
\draw[blue] (-1.49,5.32).. controls (-1.12,5.74) .. (-0.00,5.88);
\draw[blue] (-1.49,5.32).. controls (-1.12,4.90) .. (-0.00,4.76);
\draw[blue] (-1.49,5.32).. controls (-2.24,5.32) .. (-2.69,5.32);
\draw[blue,fill=white] (node1) circle (3pt);
\draw[blue,fill=white] (node2) circle (3pt);
\draw[blue,fill=white] (node3) circle (3pt);
\draw[blue,fill=white] (node4) circle (3pt);
\draw[blue,fill=white] (node123) circle (3pt);
\draw[blue,fill=blue] (node124) circle (3pt);
\draw[blue,fill=white] (node134) circle (3pt);
\draw[blue,fill=blue] (node234) circle (3pt);
\coordinate (node12) at (1.34,8.90);
\coordinate (node14) at (1.34,7.90);
\coordinate (node23) at (-1.34,8.90);
\coordinate (node34) at (-1.34,7.90);
\coordinate (node1234) at (-0.00,8.40);
\draw[blue] (node12) -- (bnode1234n1);
\draw[blue] (node23) -- (bnode1234n1);
\draw[blue] (node34) -- (bnode1234n2);
\draw[blue] (node14) -- (bnode1234n2);
\draw[blue] (bnode1234n1) -- (bnode1234n2);
\draw[blue,fill=blue] (bnode1234n1) circle (3pt);
\draw[blue,fill=blue] (bnode1234n2) circle (3pt);
\draw[blue,fill=white] (node12) circle (3pt);
\draw[blue,fill=white] (node14) circle (3pt);
\draw[blue,fill=white] (node23) circle (3pt);
\draw[blue,fill=white] (node34) circle (3pt);
\node[draw,ellipse,black,fill=white] (node1) at (2.24,2.24) {$Sa$};
\node[draw,ellipse,black,fill=white] (node2) at (0.00,3.08) {$Sb$};
\node[draw,ellipse,black,fill=white] (node3) at (-2.24,2.24) {$Sc$};
\node[draw,ellipse,black,fill=white] (node4) at (-0.00,1.40) {$Sd$};
\node[draw,ellipse,black,fill=white] (node12) at (2.24,6.16) {$Sa\,b$};
\node[draw,ellipse,black,fill=white] (node14) at (2.24,4.48) {$Sa\,d$};
\node[draw,ellipse,black,fill=white] (node23) at (-2.24,6.16) {$Sb\,c$};
\node[draw,ellipse,black,fill=white] (node24) at (-0.00,5.32) {$Sb\,d$};
\node[draw,ellipse,black,fill=white] (node34) at (-2.24,4.48) {$Sc\,d$};
\node[draw,ellipse,black,fill=white] (node123) at (0.00,9.24) {$Sa\,b\,c$};
\node[draw,ellipse,black,fill=white] (node124) at (2.24,8.40) {$Sa\,b\,d$};
\node[draw,ellipse,black,fill=white] (node134) at (-0.00,7.56) {$Sa\,c\,d$};
\node[draw,ellipse,black,fill=white] (node234) at (-2.24,8.40) {$Sb\,c\,d$};
\end{tikzpicture}}
&
\scalebox{0.8}{
\begin{tikzpicture}[every node/.style={scale=1.2}]
\draw[color=white] (0,0) circle (3pt);
\node[] (arrow1) at (0.00,1.05) {$\xleftrightarrow{(M1)}$};
\node[] (arrow2) at (0.00,4.20) {$\xleftrightarrow{(M2)}$};
\node[] (arrow3) at (0.00,7.35) {$\xleftrightarrow{(M3)}$};
\end{tikzpicture}}
&
\scalebox{0.8}{
\begin{tikzpicture}[every node/.style={scale=0.8}]
\node[draw,ellipse,black,fill=white] (node1) at (2.24,2.24) {$Sa$};
\node[draw,ellipse,black,fill=white] (node2) at (0.00,3.08) {$Sb$};
\node[draw,ellipse,black,fill=white] (node3) at (-2.24,2.24) {$Sc$};
\node[draw,ellipse,black,fill=white] (node4) at (-0.00,1.40) {$Sd$};
\node[draw,ellipse,black,fill=white] (node12) at (2.24,6.16) {$Sa\,b$};
\node[draw,ellipse,black,fill=white] (node13) at (0.00,5.32) {$Sa\,c$};
\node[draw,ellipse,black,fill=white] (node14) at (2.24,4.48) {$Sa\,d$};
\node[draw,ellipse,black,fill=white] (node23) at (-2.24,6.16) {$Sb\,c$};
\node[draw,ellipse,black,fill=white] (node34) at (-2.24,4.48) {$Sc\,d$};
\node[draw,ellipse,black,fill=white] (node123) at (0.00,9.24) {$Sa\,b\,c$};
\node[draw,ellipse,black,fill=white] (node124) at (2.24,8.40) {$Sa\,b\,d$};
\node[draw,ellipse,black,fill=white] (node134) at (-0.00,7.56) {$Sa\,c\,d$};
\node[draw,ellipse,black,fill=white] (node234) at (-2.24,8.40) {$Sb\,c\,d$};
\coordinate (wnode0n1) at (0.00,2.52);
\coordinate (wnode0n2) at (-0.00,1.96);
\coordinate (wnode1n1) at (1.49,5.32);
\fill [opacity=0.2,black] (node12.center) -- (node13.center) -- (node23.center) -- cycle;
\coordinate (bnode123n1) at (0.00,5.88);
\fill [opacity=0.2,black] (node13.center) -- (node14.center) -- (node34.center) -- cycle;
\coordinate (bnode134n1) at (-0.00,4.76);
\coordinate (wnode3n1) at (-1.49,5.32);
\fill [opacity=0.2,black] (node123.center) -- (node124.center) -- (node134.center) -- cycle;
\coordinate (bnode1234n1) at (0.75,8.40);
\fill [opacity=0.2,black] (node123.center) -- (node134.center) -- (node234.center) -- cycle;
\coordinate (bnode1234n2) at (-0.75,8.40);
\draw[line width=0.03mm,black] (node1) -- (node2);
\draw[line width=0.03mm,black] (node1) -- (node3);
\draw[line width=0.03mm,black] (node1) -- (node4);
\draw[line width=0.03mm,black] (node2) -- (node1);
\draw[line width=0.03mm,black] (node2) -- (node3);
\draw[line width=0.03mm,black] (node3) -- (node1);
\draw[line width=0.03mm,black] (node3) -- (node2);
\draw[line width=0.03mm,black] (node3) -- (node4);
\draw[line width=0.03mm,black] (node4) -- (node1);
\draw[line width=0.03mm,black] (node4) -- (node3);
\draw[line width=0.03mm,black] (node12) -- (node13);
\draw[line width=0.03mm,black] (node12) -- (node14);
\draw[line width=0.03mm,black] (node12) -- (node23);
\draw[line width=0.03mm,black] (node13) -- (node12);
\draw[line width=0.03mm,black] (node13) -- (node14);
\draw[line width=0.03mm,black] (node13) -- (node23);
\draw[line width=0.03mm,black] (node13) -- (node34);
\draw[line width=0.03mm,black] (node14) -- (node12);
\draw[line width=0.03mm,black] (node14) -- (node13);
\draw[line width=0.03mm,black] (node14) -- (node34);
\draw[line width=0.03mm,black] (node23) -- (node12);
\draw[line width=0.03mm,black] (node23) -- (node13);
\draw[line width=0.03mm,black] (node23) -- (node34);
\draw[line width=0.03mm,black] (node34) -- (node13);
\draw[line width=0.03mm,black] (node34) -- (node14);
\draw[line width=0.03mm,black] (node34) -- (node23);
\draw[line width=0.03mm,black] (node123) -- (node124);
\draw[line width=0.03mm,black] (node123) -- (node134);
\draw[line width=0.03mm,black] (node123) -- (node234);
\draw[line width=0.03mm,black] (node124) -- (node123);
\draw[line width=0.03mm,black] (node124) -- (node134);
\draw[line width=0.03mm,black] (node134) -- (node123);
\draw[line width=0.03mm,black] (node134) -- (node124);
\draw[line width=0.03mm,black] (node134) -- (node234);
\draw[line width=0.03mm,black] (node234) -- (node123);
\draw[line width=0.03mm,black] (node234) -- (node134);
\coordinate (node0) at (-0.00,2.24);
\coordinate (node12) at (1.34,2.74);
\coordinate (node14) at (1.34,1.74);
\coordinate (node23) at (-1.34,2.74);
\coordinate (node34) at (-1.34,1.74);
\draw[blue] (node12) -- (wnode0n1);
\draw[blue] (node23) -- (wnode0n1);
\draw[blue] (node34) -- (wnode0n2);
\draw[blue] (node14) -- (wnode0n2);
\draw[blue] (wnode0n1) -- (wnode0n2);
\draw[blue,fill=white] (wnode0n1) circle (3pt);
\draw[blue,fill=white] (wnode0n2) circle (3pt);
\draw[blue,fill=white] (node12) circle (3pt);
\draw[blue,fill=white] (node14) circle (3pt);
\draw[blue,fill=white] (node23) circle (3pt);
\draw[blue,fill=white] (node34) circle (3pt);
\coordinate (node1) at (1.49,5.32);
\coordinate (node2) at (0.00,6.33);
\coordinate (node3) at (-1.49,5.32);
\coordinate (node4) at (-0.00,4.31);
\coordinate (node123) at (0.00,5.88);
\coordinate (node124) at (2.69,5.32);
\coordinate (node134) at (-0.00,4.76);
\coordinate (node234) at (-2.69,5.32);
\draw[blue] (1.49,5.32).. controls (1.12,5.74) .. (0.00,5.88);
\draw[blue] (1.49,5.32).. controls (1.12,4.90) .. (-0.00,4.76);
\draw[blue] (1.49,5.32).. controls (2.24,5.32) .. (2.69,5.32);
\draw[blue] (0.00,6.33).. controls (0.00,6.16) .. (0.00,5.88);
\draw[blue] (-1.49,5.32).. controls (-1.12,5.74) .. (0.00,5.88);
\draw[blue] (-1.49,5.32).. controls (-2.24,5.32) .. (-2.69,5.32);
\draw[blue] (-1.49,5.32).. controls (-1.12,4.90) .. (-0.00,4.76);
\draw[blue] (-0.00,4.31).. controls (-0.00,4.48) .. (-0.00,4.76);
\draw[blue] (0.00,5.88).. controls (1.12,5.74) .. (1.49,5.32);
\draw[blue] (0.00,5.88).. controls (-1.12,5.74) .. (-1.49,5.32);
\draw[blue] (0.00,5.88).. controls (0.00,6.16) .. (0.00,6.33);
\draw[blue] (2.69,5.32).. controls (2.24,5.32) .. (1.49,5.32);
\draw[blue] (-0.00,4.76).. controls (1.12,4.90) .. (1.49,5.32);
\draw[blue] (-0.00,4.76).. controls (-0.00,4.48) .. (-0.00,4.31);
\draw[blue] (-0.00,4.76).. controls (-1.12,4.90) .. (-1.49,5.32);
\draw[blue] (-2.69,5.32).. controls (-2.24,5.32) .. (-1.49,5.32);
\draw[blue,fill=white] (node1) circle (3pt);
\draw[blue,fill=white] (node2) circle (3pt);
\draw[blue,fill=white] (node3) circle (3pt);
\draw[blue,fill=white] (node4) circle (3pt);
\draw[blue,fill=blue] (node123) circle (3pt);
\draw[blue,fill=white] (node124) circle (3pt);
\draw[blue,fill=blue] (node134) circle (3pt);
\draw[blue,fill=white] (node234) circle (3pt);
\coordinate (node12) at (1.34,8.90);
\coordinate (node14) at (1.34,7.90);
\coordinate (node23) at (-1.34,8.90);
\coordinate (node34) at (-1.34,7.90);
\coordinate (node1234) at (-0.00,8.40);
\draw[blue] (node12) -- (bnode1234n1);
\draw[blue] (node23) -- (bnode1234n2);
\draw[blue] (node34) -- (bnode1234n2);
\draw[blue] (node14) -- (bnode1234n1);
\draw[blue] (bnode1234n1) -- (bnode1234n2);
\draw[blue,fill=blue] (bnode1234n1) circle (3pt);
\draw[blue,fill=blue] (bnode1234n2) circle (3pt);
\draw[blue,fill=white] (node12) circle (3pt);
\draw[blue,fill=white] (node14) circle (3pt);
\draw[blue,fill=white] (node23) circle (3pt);
\draw[blue,fill=white] (node34) circle (3pt);
\node[draw,ellipse,black,fill=white] (node1) at (2.24,2.24) {$Sa$};
\node[draw,ellipse,black,fill=white] (node2) at (0.00,3.08) {$Sb$};
\node[draw,ellipse,black,fill=white] (node3) at (-2.24,2.24) {$Sc$};
\node[draw,ellipse,black,fill=white] (node4) at (-0.00,1.40) {$Sd$};
\node[draw,ellipse,black,fill=white] (node12) at (2.24,6.16) {$Sa\,b$};
\node[draw,ellipse,black,fill=white] (node13) at (0.00,5.32) {$Sa\,c$};
\node[draw,ellipse,black,fill=white] (node14) at (2.24,4.48) {$Sa\,d$};
\node[draw,ellipse,black,fill=white] (node23) at (-2.24,6.16) {$Sb\,c$};
\node[draw,ellipse,black,fill=white] (node34) at (-2.24,4.48) {$Sc\,d$};
\node[draw,ellipse,black,fill=white] (node123) at (0.00,9.24) {$Sa\,b\,c$};
\node[draw,ellipse,black,fill=white] (node124) at (2.24,8.40) {$Sa\,b\,d$};
\node[draw,ellipse,black,fill=white] (node134) at (-0.00,7.56) {$Sa\,c\,d$};
\node[draw,ellipse,black,fill=white] (node234) at (-2.24,8.40) {$Sb\,c\,d$};
\end{tikzpicture}}
\\

\end{tabular}
\setlength{\tabcolsep}{6pt}

 \caption{\label{fig:mutation} A mutation $\Tiling\leftrightarrow\Tiling'$ corresponds to performing three simultaneous moves on plabic graphs: (M1) on level $|S|+1$, (M2) on level $|S|+2$, and (M3) on level $|S|+3$. 
 }
\end{figure}

Finally we consider the well-studied notion of \emph{mutations} of zonotopal tilings. It is easy to see that there are only two fine zonotopal tilings of $\Zon(4,3)$. Given any two distinct fine zonotopal tilings $\Tiling$ and $\Tiling'$ of $\Zon(n,3)$, we say that they \emph{differ by a mutation} if there is a set $S\subset [n]$ and four indices $a,b,c,d\in [n]-S$ such that for any tile $\tau_X$ we have 
\[X\in\Tiling\Longleftrightarrow X\in\Tiling'\]
unless $S\subset X^+\subset Sabcd$ and $X^0\subset \{a,b,c,d\}$. This can be reformulated as follows. Consider a signed set $Y=(S,[n]-Sabcd)$. Thus $\tau_Y$ is a zonotope that is combinatorially equivalent to $\Zon(4,3)$. Hence there are only two fine zonotopal tilings of $\tau_Y$  which we denote $\Tiling_0$ and $\Tiling_0'$ (the tilings $\Tiling_0$ and $\Tiling_0'$ are shown in Figure~\ref{fig:mutation}). Then $\Tiling$ and $\Tiling'$ differ by a mutation if and only if they coincide on the complement of $\tau_Y$ and their restrictions to $\tau_Y$ are $\Tiling_0$ and $\Tiling_0'$ respectively. 

As Figure~\ref{fig:mutation} suggests, \emph{performing a mutation on $\Tiling$ is equivalent to performing moves (M1), (M2), and (M3) on the corresponding plabic graphs}. 

\begin{remark}
 For plabic graphs $G$ with $\pi_G=\sigma\parr{k,n}$, our construction allows to deduce Postnikov's Theorem~\ref{thm:connected} from Ziegler's~\cite[Theorem 4.1(G)]{Ziegler} where he shows that the higher Bruhat order $B(n,k)$ is a graded poset with a unique minimal and maximal elements. Indeed, fine zonotopal tilings of $\Zon(n,3)$ correspond to elements of $B(n,2)$, and their mutations correspond to covering relations in $B(n,2)$. 
\end{remark}

\section{Proof of Theorem~\ref{thm:purity_positive}}\label{sect:rk_3}

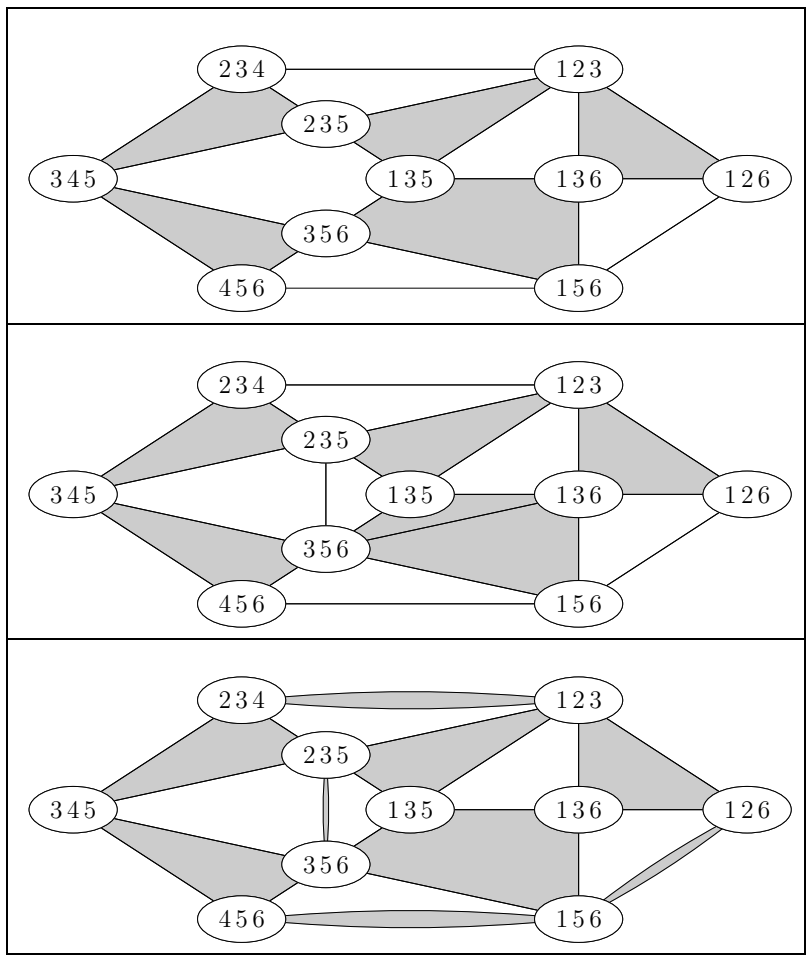
\begin{figure}
\begin{tabular}{|c|}\hline
\\
 \scalebox{0.8}{
\begin{tikzpicture}
\node[draw,ellipse,black,fill=white] (node123) at (2.80,5.74) {$1\,2\,3$};
\node[draw,ellipse,black,fill=white] (node126) at (5.60,3.92) {$1\,2\,6$};
\node[draw,ellipse,black,fill=white] (node135) at (-0.00,3.92) {$1\,3\,5$};
\node[draw,ellipse,black,fill=white] (node136) at (2.80,3.92) {$1\,3\,6$};
\node[draw,ellipse,black,fill=white] (node156) at (2.80,2.10) {$1\,5\,6$};
\node[draw,ellipse,black,fill=white] (node234) at (-2.80,5.74) {$2\,3\,4$};
\node[draw,ellipse,black,fill=white] (node235) at (-1.40,4.83) {$2\,3\,5$};
\node[draw,ellipse,black,fill=white] (node345) at (-5.60,3.92) {$3\,4\,5$};
\node[draw,ellipse,black,fill=white] (node356) at (-1.40,3.01) {$3\,5\,6$};
\node[draw,ellipse,black,fill=white] (node456) at (-2.80,2.10) {$4\,5\,6$};
\fill [opacity=0.2,black] (node123.center) -- (node126.center) -- (node136.center) -- cycle;
\fill [opacity=0.2,black] (node123.center) -- (node135.center) -- (node235.center) -- cycle;
\fill [opacity=0.2,black] (node135.center) -- (node136.center) -- (node156.center) -- (node356.center) -- cycle;
\fill [opacity=0.2,black] (node234.center) -- (node235.center) -- (node345.center) -- cycle;
\fill [opacity=0.2,black] (node345.center) -- (node356.center) -- (node456.center) -- cycle;
\draw[line width=0.04mm,black] (node123) -- (node126);
\draw[line width=0.04mm,black] (node123) -- (node135);
\draw[line width=0.04mm,black] (node123) -- (node136);
\draw[line width=0.04mm,black] (node123) -- (node234);
\draw[line width=0.04mm,black] (node123) -- (node235);
\draw[line width=0.04mm,black] (node126) -- (node123);
\draw[line width=0.04mm,black] (node126) -- (node136);
\draw[line width=0.04mm,black] (node126) -- (node156);
\draw[line width=0.04mm,black] (node135) -- (node123);
\draw[line width=0.04mm,black] (node135) -- (node136);
\draw[line width=0.04mm,black] (node135) -- (node235);
\draw[line width=0.04mm,black] (node135) -- (node356);
\draw[line width=0.04mm,black] (node136) -- (node123);
\draw[line width=0.04mm,black] (node136) -- (node126);
\draw[line width=0.04mm,black] (node136) -- (node135);
\draw[line width=0.04mm,black] (node136) -- (node156);
\draw[line width=0.04mm,black] (node156) -- (node126);
\draw[line width=0.04mm,black] (node156) -- (node136);
\draw[line width=0.04mm,black] (node156) -- (node356);
\draw[line width=0.04mm,black] (node156) -- (node456);
\draw[line width=0.04mm,black] (node234) -- (node123);
\draw[line width=0.04mm,black] (node234) -- (node235);
\draw[line width=0.04mm,black] (node234) -- (node345);
\draw[line width=0.04mm,black] (node235) -- (node123);
\draw[line width=0.04mm,black] (node235) -- (node135);
\draw[line width=0.04mm,black] (node235) -- (node234);
\draw[line width=0.04mm,black] (node235) -- (node345);
\draw[line width=0.04mm,black] (node345) -- (node234);
\draw[line width=0.04mm,black] (node345) -- (node235);
\draw[line width=0.04mm,black] (node345) -- (node356);
\draw[line width=0.04mm,black] (node345) -- (node456);
\draw[line width=0.04mm,black] (node356) -- (node135);
\draw[line width=0.04mm,black] (node356) -- (node156);
\draw[line width=0.04mm,black] (node356) -- (node345);
\draw[line width=0.04mm,black] (node356) -- (node456);
\draw[line width=0.04mm,black] (node456) -- (node345);
\draw[line width=0.04mm,black] (node456) -- (node356);
\node[draw,ellipse,black,fill=white] (node123) at (2.80,5.74) {$1\,2\,3$};
\node[draw,ellipse,black,fill=white] (node126) at (5.60,3.92) {$1\,2\,6$};
\node[draw,ellipse,black,fill=white] (node135) at (-0.00,3.92) {$1\,3\,5$};
\node[draw,ellipse,black,fill=white] (node136) at (2.80,3.92) {$1\,3\,6$};
\node[draw,ellipse,black,fill=white] (node156) at (2.80,2.10) {$1\,5\,6$};
\node[draw,ellipse,black,fill=white] (node234) at (-2.80,5.74) {$2\,3\,4$};
\node[draw,ellipse,black,fill=white] (node235) at (-1.40,4.83) {$2\,3\,5$};
\node[draw,ellipse,black,fill=white] (node345) at (-5.60,3.92) {$3\,4\,5$};
\node[draw,ellipse,black,fill=white] (node356) at (-1.40,3.01) {$3\,5\,6$};
\node[draw,ellipse,black,fill=white] (node456) at (-2.80,2.10) {$4\,5\,6$};
\end{tikzpicture}}
\\\hline\\
\scalebox{0.8}{
\begin{tikzpicture}
\node[draw,ellipse,black,fill=white] (node123) at (2.80,5.74) {$1\,2\,3$};
\node[draw,ellipse,black,fill=white] (node126) at (5.60,3.92) {$1\,2\,6$};
\node[draw,ellipse,black,fill=white] (node135) at (-0.00,3.92) {$1\,3\,5$};
\node[draw,ellipse,black,fill=white] (node136) at (2.80,3.92) {$1\,3\,6$};
\node[draw,ellipse,black,fill=white] (node156) at (2.80,2.10) {$1\,5\,6$};
\node[draw,ellipse,black,fill=white] (node234) at (-2.80,5.74) {$2\,3\,4$};
\node[draw,ellipse,black,fill=white] (node235) at (-1.40,4.83) {$2\,3\,5$};
\node[draw,ellipse,black,fill=white] (node345) at (-5.60,3.92) {$3\,4\,5$};
\node[draw,ellipse,black,fill=white] (node356) at (-1.40,3.01) {$3\,5\,6$};
\node[draw,ellipse,black,fill=white] (node456) at (-2.80,2.10) {$4\,5\,6$};
\fill [opacity=0.2,black] (node123.center) -- (node126.center) -- (node136.center) -- cycle;
\fill [opacity=0.2,black] (node123.center) -- (node135.center) -- (node235.center) -- cycle;
\fill [opacity=0.2,black] (node135.center) -- (node136.center) -- (node356.center) -- cycle;
\fill [opacity=0.2,black] (node136.center) -- (node156.center) -- (node356.center) -- cycle;
\fill [opacity=0.2,black] (node234.center) -- (node235.center) -- (node345.center) -- cycle;
\fill [opacity=0.2,black] (node345.center) -- (node356.center) -- (node456.center) -- cycle;
\draw[line width=0.04mm,black] (node123) -- (node126);
\draw[line width=0.04mm,black] (node123) -- (node135);
\draw[line width=0.04mm,black] (node123) -- (node136);
\draw[line width=0.04mm,black] (node123) -- (node234);
\draw[line width=0.04mm,black] (node123) -- (node235);
\draw[line width=0.04mm,black] (node126) -- (node123);
\draw[line width=0.04mm,black] (node126) -- (node136);
\draw[line width=0.04mm,black] (node126) -- (node156);
\draw[line width=0.04mm,black] (node135) -- (node123);
\draw[line width=0.04mm,black] (node135) -- (node136);
\draw[line width=0.04mm,black] (node135) -- (node235);
\draw[line width=0.04mm,black] (node135) -- (node356);
\draw[line width=0.04mm,black] (node136) -- (node123);
\draw[line width=0.04mm,black] (node136) -- (node126);
\draw[line width=0.04mm,black] (node136) -- (node135);
\draw[line width=0.04mm,black] (node136) -- (node156);
\draw[line width=0.04mm,black] (node136) -- (node356);
\draw[line width=0.04mm,black] (node156) -- (node126);
\draw[line width=0.04mm,black] (node156) -- (node136);
\draw[line width=0.04mm,black] (node156) -- (node356);
\draw[line width=0.04mm,black] (node156) -- (node456);
\draw[line width=0.04mm,black] (node234) -- (node123);
\draw[line width=0.04mm,black] (node234) -- (node235);
\draw[line width=0.04mm,black] (node234) -- (node345);
\draw[line width=0.04mm,black] (node235) -- (node123);
\draw[line width=0.04mm,black] (node235) -- (node135);
\draw[line width=0.04mm,black] (node235) -- (node234);
\draw[line width=0.04mm,black] (node235) -- (node345);
\draw[line width=0.04mm,black] (node235) -- (node356);
\draw[line width=0.04mm,black] (node345) -- (node234);
\draw[line width=0.04mm,black] (node345) -- (node235);
\draw[line width=0.04mm,black] (node345) -- (node356);
\draw[line width=0.04mm,black] (node345) -- (node456);
\draw[line width=0.04mm,black] (node356) -- (node135);
\draw[line width=0.04mm,black] (node356) -- (node136);
\draw[line width=0.04mm,black] (node356) -- (node156);
\draw[line width=0.04mm,black] (node356) -- (node235);
\draw[line width=0.04mm,black] (node356) -- (node345);
\draw[line width=0.04mm,black] (node356) -- (node456);
\draw[line width=0.04mm,black] (node456) -- (node156);
\draw[line width=0.04mm,black] (node456) -- (node345);
\draw[line width=0.04mm,black] (node456) -- (node356);
\node[draw,ellipse,black,fill=white] (node123) at (2.80,5.74) {$1\,2\,3$};
\node[draw,ellipse,black,fill=white] (node126) at (5.60,3.92) {$1\,2\,6$};
\node[draw,ellipse,black,fill=white] (node135) at (-0.00,3.92) {$1\,3\,5$};
\node[draw,ellipse,black,fill=white] (node136) at (2.80,3.92) {$1\,3\,6$};
\node[draw,ellipse,black,fill=white] (node156) at (2.80,2.10) {$1\,5\,6$};
\node[draw,ellipse,black,fill=white] (node234) at (-2.80,5.74) {$2\,3\,4$};
\node[draw,ellipse,black,fill=white] (node235) at (-1.40,4.83) {$2\,3\,5$};
\node[draw,ellipse,black,fill=white] (node345) at (-5.60,3.92) {$3\,4\,5$};
\node[draw,ellipse,black,fill=white] (node356) at (-1.40,3.01) {$3\,5\,6$};
\node[draw,ellipse,black,fill=white] (node456) at (-2.80,2.10) {$4\,5\,6$};
\end{tikzpicture}}
\\\hline\\
\scalebox{0.8}{
\begin{tikzpicture}
\node[draw,ellipse,black,fill=white] (node123) at (2.80,5.74) {$1\,2\,3$};
\node[draw,ellipse,black,fill=white] (node126) at (5.60,3.92) {$1\,2\,6$};
\node[draw,ellipse,black,fill=white] (node135) at (-0.00,3.92) {$1\,3\,5$};
\node[draw,ellipse,black,fill=white] (node136) at (2.80,3.92) {$1\,3\,6$};
\node[draw,ellipse,black,fill=white] (node156) at (2.80,2.10) {$1\,5\,6$};
\node[draw,ellipse,black,fill=white] (node234) at (-2.80,5.74) {$2\,3\,4$};
\node[draw,ellipse,black,fill=white] (node235) at (-1.40,4.83) {$2\,3\,5$};
\node[draw,ellipse,black,fill=white] (node345) at (-5.60,3.92) {$3\,4\,5$};
\node[draw,ellipse,black,fill=white] (node356) at (-1.40,3.01) {$3\,5\,6$};
\node[draw,ellipse,black,fill=white] (node456) at (-2.80,2.10) {$4\,5\,6$};
\fill [opacity=0.2,black] (node123.center) -- (node126.center) -- (node136.center) -- cycle;
\fill [opacity=0.2,black] (node123.center) -- (node135.center) -- (node235.center) -- cycle;
\fill [opacity=0.2,black] (node135.center) -- (node136.center) -- (node156.center) -- (node356.center) -- cycle;
\fill [opacity=0.2,black] (node234.center) -- (node235.center) -- (node345.center) -- cycle;
\fill [opacity=0.2,black] (node345.center) -- (node356.center) -- (node456.center) -- cycle;
\draw[line width=0.04mm,black] (node123) -- (node126);
\draw[line width=0.04mm,black] (node123) -- (node135);
\draw[line width=0.04mm,black] (node123) -- (node136);\draw[line width=0.06mm,black] (node123.center) to[bend right=5] (node234.center)  to[bend right=5] (node123.center) ;
\draw[line width=0.06mm,black,fill=black,opacity=0.2] (node123.center) to[bend right=5] (node234.center)  to[bend right=5] (node123.center) ;
\draw[line width=0.04mm,black] (node123) -- (node235);
\draw[line width=0.04mm,black] (node126) -- (node123);
\draw[line width=0.04mm,black] (node126) -- (node136);
\draw[line width=0.04mm,black] (node135) -- (node123);
\draw[line width=0.04mm,black] (node135) -- (node136);
\draw[line width=0.04mm,black] (node135) -- (node235);
\draw[line width=0.04mm,black] (node135) -- (node356);
\draw[line width=0.04mm,black] (node136) -- (node123);
\draw[line width=0.04mm,black] (node136) -- (node126);
\draw[line width=0.04mm,black] (node136) -- (node135);
\draw[line width=0.04mm,black] (node136) -- (node156);
\draw[line width=0.06mm,black] (node156.center) to[bend right=5] (node126.center)  to[bend right=5] (node156.center) ;
\draw[line width=0.06mm,black,fill=black,opacity=0.2] (node156.center) to[bend right=5] (node126.center)  to[bend right=5] (node156.center) ;
\draw[line width=0.04mm,black] (node156) -- (node136);
\draw[line width=0.04mm,black] (node156) -- (node356);
\draw[line width=0.04mm,black] (node234) -- (node235);
\draw[line width=0.04mm,black] (node234) -- (node345);
\draw[line width=0.04mm,black] (node235) -- (node123);
\draw[line width=0.04mm,black] (node235) -- (node135);
\draw[line width=0.04mm,black] (node235) -- (node234);
\draw[line width=0.04mm,black] (node235) -- (node345);\draw[line width=0.06mm,black] (node235.center) to[bend right=5] (node356.center)  to[bend right=5] (node235.center) ;
\draw[line width=0.06mm,black,fill=black,opacity=0.2] (node235.center) to[bend right=5] (node356.center)  to[bend right=5] (node235.center) ;
\draw[line width=0.04mm,black] (node345) -- (node234);
\draw[line width=0.04mm,black] (node345) -- (node235);
\draw[line width=0.04mm,black] (node345) -- (node356);
\draw[line width=0.04mm,black] (node345) -- (node456);
\draw[line width=0.04mm,black] (node356) -- (node135);
\draw[line width=0.04mm,black] (node356) -- (node156);
\draw[line width=0.04mm,black] (node356) -- (node345);
\draw[line width=0.04mm,black] (node356) -- (node456);\draw[line width=0.06mm,black] (node456.center) to[bend right=5] (node156.center)  to[bend right=5] (node456.center) ;
\draw[line width=0.06mm,black,fill=black,opacity=0.2] (node456.center) to[bend right=5] (node156.center)  to[bend right=5] (node456.center) ;
\draw[line width=0.04mm,black] (node456) -- (node345);
\draw[line width=0.04mm,black] (node456) -- (node356);
\node[draw,ellipse,black,fill=white] (node123) at (2.80,5.74) {$1\,2\,3$};
\node[draw,ellipse,black,fill=white] (node126) at (5.60,3.92) {$1\,2\,6$};
\node[draw,ellipse,black,fill=white] (node135) at (-0.00,3.92) {$1\,3\,5$};
\node[draw,ellipse,black,fill=white] (node136) at (2.80,3.92) {$1\,3\,6$};
\node[draw,ellipse,black,fill=white] (node156) at (2.80,2.10) {$1\,5\,6$};
\node[draw,ellipse,black,fill=white] (node234) at (-2.80,5.74) {$2\,3\,4$};
\node[draw,ellipse,black,fill=white] (node235) at (-1.40,4.83) {$2\,3\,5$};
\node[draw,ellipse,black,fill=white] (node345) at (-5.60,3.92) {$3\,4\,5$};
\node[draw,ellipse,black,fill=white] (node356) at (-1.40,3.01) {$3\,5\,6$};
\node[draw,ellipse,black,fill=white] (node456) at (-2.80,2.10) {$4\,5\,6$};
\end{tikzpicture}}
\\\hline
\end{tabular}
\caption{\label{fig:tiling}A maximal {by inclusion} chord separated collection in $[6]\choose 3$ corresponds to a plabic tiling (top). A triangulated plabic tiling (middle). A modified plabic tiling $\TPTiling_i'$ from the proof of Lemma~\ref{lemma:lifting} (bottom).}
\end{figure}

We now introduce a new object that is dual to reduced trivalent plabic graphs from Section~\ref{sect:plabic_graphs}:
\begin{definition}
 A \emph{triangulated plabic tiling}  $\TPTiling$ corresponding to a maximal {by inclusion} chord separated collection $\WS\subset{[n]\choose k}$ is any triangulation of $\PTiling(\WS)$ with the same set of vertices as $\PTiling(\WS)$. In other words, $\TPTiling$ consists of $\PTiling(\WS)$ together with a triangulation of each face of $\PTiling(\WS)$ corresponding to a non-trivial clique of size at least $4$. In this case, $k$ is called the \emph{level} of $\TPTiling$.
\end{definition}

For example, Figure~\ref{fig:tiling} (middle) contains one of the four possible triangulated plabic tilings corresponding to the collection in Figure~\ref{fig:tiling} (top). Note that if an edge of a triangulated plabic tiling connects $\v_S$ and $\v_T$ then $S=Tj- i$ and $T=Si- j$ for some $i\neq j$.

Each triangulated plabic tiling $\TPTiling$ has the underlying chord separated collection $\WS$, and the vertices of $\TPTiling$ are naturally labeled by the elements of $\WS$. We denote by $\Vert(\TPTiling)$ the collection of such labels, thus $\WS=\Vert(\TPTiling)$ can be reconstructed from $\TPTiling$ in a trivial manner.

\begin{lemma}\label{lemma:lifting}
 If $\TPTiling_i$ is a triangulated plabic tiling of level $i$ then the collection 
 \[\UP(\TPTiling_i):=\{S\cup T\mid (\v_S,\v_T)\text{ is an edge of $\TPTiling_i$}\}\]
 has size $(i+1)(n-i-1)+1$. Similarly, the collection
 \[\DOWN(\TPTiling_i):=\{S\cap T\mid (\v_S,\v_T)\text{ is an edge of $\TPTiling_i$}\}\]
 has size $(i-1)(n-i+1)+1$.
\end{lemma}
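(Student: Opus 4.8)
The plan is to show that $\UP(\TPTiling_i)$ is a maximal by inclusion chord separated collection inside $\binom{[n]}{i+1}$; its size $(i+1)(n-i-1)+1$ then follows immediately from Theorem~\ref{thm:plabic_OPS}. The statement for $\DOWN$ will follow by the complementation symmetry $S\mapsto [n]-S$: this map sends $\TPTiling_i$ to a triangulated plabic tiling of level $n-i$, preserves edges (a single-element swap stays a swap), and satisfies $[n]-(S\cap T)=([n]-S)\cup([n]-T)$, so that $\DOWN(\TPTiling_i)$ is the complement of the $\UP$-collection of the complemented tiling; substituting $i\mapsto n-i$ into the formula for $\UP$ then yields $(n-i+1)(i-1)+1=(i-1)(n-i+1)+1$. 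So I will focus throughout on $\UP$. Note first that every edge $(\v_S,\v_T)$ is a single swap, i.e.\ $S=Kx$ and $T=Ky$ with $K=S\cap T\in\binom{[n]}{i-1}$ and $S\cup T=Kxy\in\binom{[n]}{i+1}$; hence $\UP\subset\binom{[n]}{i+1}$ is well defined.

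For the upper bound $|\UP|\le (i+1)(n-i-1)+1$ I would prove that $\UP$ is chord separated and then use that any chord separated family in $\binom{[n]}{i+1}$ embeds into a maximal one, whose size is $(i+1)(n-i-1)+1$ by Theorem~\ref{thm:plabic_OPS}. The heart of this step is a local lemma: if $L_1=S_1\cup T_1$ and $L_2=S_2\cup T_2$ arise from two edges of $\TPTiling_i$, then $L_1$ and $L_2$ are chord separated. I would establish it by a direct case analysis: assuming a forbidden cyclic pattern $a<b<c<d$ with $a,c\in L_1-L_2$ and $b,d\in L_2-L_1$, I distribute $a,c$ among $S_1,T_1$ and $b,d$ among $S_2,T_2$, using that $S_j,T_j$ differ by a single swap, and locate two of the four sets $S_1,T_1,S_2,T_2\in\WS$ that already violate chord separation of $\WS=\Vert(\TPTiling_i)$. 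This is technical but routine given the swap structure.

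The main obstacle is the matching lower bound, equivalently the maximality of $\UP$; a naive count will not suffice here, since the individual clique sizes vary from one triangulation to another while $|\UP|$ must stay constant, so rigidity has to come from the separation structure. Here I would use the modified tiling $\TPTiling_i'$ (Figure~\ref{fig:tiling}, bottom): along every edge $(\v_S,\v_T)$ whose union $L=S\cup T$ is not already the vertex-label set of a genuine black $2$-cell (i.e.\ with $|\BlCl(L)|=2$) I insert a black bigon, so that in $\TPTiling_i'$ each $L\in\UP$ is the union of the vertex labels of a unique maximal black $2$-cell. I would then exhibit a projection of $\TPTiling_i'$ that contracts each maximal black $2$-cell to the single point $\v_L$ at height $i+1$ and sends each white triangle $\{Kx,Ky,Kz\}$ to a $2$-cell; this should produce exactly the complex $\PTiling(\UP)$ and show it covers the entire level-$(i+1)$ polygon whose boundary vertices are the cyclic intervals of length $i+1$. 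Since, in the OPS theory underlying Theorem~\ref{thm:plabic_OPS}, a chord separated collection in $\binom{[n]}{i+1}$ is maximal precisely when its plabic tiling fills the whole polygon, this forces $\UP$ to be maximal and hence of size $(i+1)(n-i-1)+1$. The delicate point, and the step I expect to require the most care, is checking that this projection is well defined and yields a genuine, gap-free subdivision; this is where the geometry of the cyclic configuration $\Cyclic(n,3)$ and the absence of bad crossings in $\TPTiling_i$ enter.
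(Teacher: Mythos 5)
Your overall strategy is genuinely different from the paper's, and it has a concrete gap at its first step. The paper proves this lemma by a short Euler-characteristic count on the modified tiling $\TPTiling_i'$: the vertex count $i(n-i)+1$ comes from Theorem~\ref{thm:plabic_OPS}, the number of white triangles $w=i(n-i-1)$ comes from Postnikov's invariance of the number of trivalent white vertices (computed on the \emph{Le}-diagram of the top cell), every edge of $\TPTiling_i'$ borders exactly one white triangle so $E=3w+n$, and $V-E+F=2$ then forces the number of black faces, which biject with $\UP(\TPTiling_i)$, to be $(i+1)(n-i-1)+1$. No chord separation of $\UP$ and no maximality statement is needed; indeed, in the paper those facts are \emph{consequences} of this lemma (via Corollary~\ref{cor:lifting} and Proposition~\ref{prop:families_collections}), so your route runs close to circularity and, more importantly, has to re-derive the hard parts from scratch.

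The concrete failure point is your ``local lemma.'' You propose to show that $S_1\cup T_1$ and $S_2\cup T_2$ are chord separated by locating a chord-separation violation among the four endpoint sets $S_1,T_1,S_2,T_2\in\WS$. This cannot work in general: take $K\in{[n]\choose i-1}$ and $a,b,c,d$ cyclically ordered with $Ka,Kb,Kc,Kd\in\WS$; the four sets $Ka,Kb,Kc,Kd$ are pairwise chord separated, yet $Kac$ and $Kbd$ are not. (The smallest instance is $n=4$, $i=1$, with the two diagonals $(\v_1,\v_3)$ and $(\v_2,\v_4)$ of the single white quadrilateral.) What rules this out is not the swap structure but planarity: the two segments are crossing diagonals of the convex polygon on $\WhCl(K)$ and hence cannot both be edges of a triangulation. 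So your case analysis must import a geometric non-crossing argument, and when the two edges lie in different cliques ($K_1\neq K_2$) they are translates that need not cross, so even that does not settle it directly. The second half of your plan --- contracting black $2$-cells of $\TPTiling_i'$ to produce $\PTiling(\UP(\TPTiling_i))$ and concluding maximality because it fills the level-$(i+1)$ polygon --- is plausible but is essentially the construction of the compatible tiling $\TPTiling_{i+1}$, i.e.\ the content of Lemma~\ref{lemma:triangulation} and Proposition~\ref{prop:tilings_families}; as written it is a sketch, and it also relies on a converse to Theorem~\ref{thm:plabic_OPS} (``fills the polygon implies maximal'') that the paper does not state. I would recommend abandoning this route for the Euler-characteristic computation, which proves the lemma in a few lines.
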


\begin{proof}
The relation of chord separation is preserved under taking complements, so we only need to prove the first claim.

Take $\TPTiling_i$ and consider a pair of white triangles sharing an edge. Replace this edge by a pair of parallel edges and a black $2$-gon inside. Do this for all such pairs, and also do the same thing for each boundary edge adjacent to a white triangle. For each pair of black triangles sharing an edge, remove that edge. Let $\TPTiling_i'$ be the resulting modified tiling, which is a planar graph with faces colored black and white. Each of its non-boundary edges lies between a white face and a black face. Such a modified tiling $\TPTiling_i'$ is illustrated in Figure~\ref{fig:tiling}(bottom).

The sets in $\UP(\TPTiling_i)$ correspond naturally to the black faces of $\TPTiling'_i$. All the white faces of $\TPTiling'_i$ are triangles, and thus the number of edges in $\TPTiling'_i$ equals $3w+n$, where $w$ is the number of white faces in $\TPTiling'_i$ and $n$ is the number of boundary edges in $\TPTiling'_i$ (by construction, all of them are adjacent to a black face). Let $b$ be the number of black faces of $\TPTiling'_i$, so $|\UP(\TPTiling_i)|=b$ is the number that we want to find. By Theorem~\ref{thm:plabic_OPS}, we know that the number of vertices of $\TPTiling'_i$ equals $i(n-i)+1$. 

\def\Le{{ \reflectbox{$\mathrm{L}$}}}

It follows from the general theory of plabic graphs (see~\cite{Postnikov}) that the number of white triangles in $\TPTiling'_i$ equals $w=i(n-i-1)$. Indeed, $w$ is the number of trivalent white vertices in a reduced plabic graph corresponding to the top cell, and this number is invariant under square moves and contractions/uncontractions, so one just needs to count this number for the corresponding \emph{$\Le$ diagram}, which in this case is a rectangle, see \cite[Figure~20.1]{Postnikov}.

We can now apply Euler's formula, where we also need to count the outer face:
\[i(n-i)+1-(3w+n)+(w+b+1)=2.\]
This is an equation on $b$ yielding the desired result $b=(i+1)(n-i-1)+1$. As an example, take the tiling from the bottom of Figure~\ref{fig:tiling}. We have $i=3,n=6,w=6,b=9$ so the above equation is indeed satisfied:
\[3\cdot 3+1-(3\cdot 6+6)+(6+9+1)=10-24+16=2.\]
\end{proof}

\begin{definition}\label{dfn:compatible}
 Let $\TPTiling_i,\TPTiling_{i+1}$ be any two triangulated plabic tilings of levels $i$ and $i+1$ respectively. We say that the triangulated plabic tilings $\TPTiling_i,\TPTiling_{i+1}$ are \emph{compatible} if 
\begin{itemize}
 \item for every edge $(\v_S,\v_T)$ of $\TPTiling_i$, $\v_{S\cup T}$ is a vertex of $\TPTiling_{i+1}$;
 \item for every edge $(\v_S,\v_T)$ of $\TPTiling_{i+1}$, $\v_{S\cap T}$ is a vertex of $\TPTiling_i$.
\end{itemize}
\end{definition}

Lemma~\ref{lemma:lifting} combined with Theorem~\ref{thm:plabic_OPS} yields the following:
\begin{corollary}\label{cor:lifting}
 Let $\TPTiling_i,\TPTiling_{i+1}$ be two compatible triangulated plabic tilings of levels $i$ and $i+1$ respectively. Then 
 \[\Vert(\TPTiling_{i+1})=\UP(\TPTiling_i);\quad \Vert(\TPTiling_i)=\DOWN(\TPTiling_{i+1}).\]
\end{corollary}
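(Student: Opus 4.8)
The plan is to obtain both equalities by a pure cardinality comparison: compatibility already supplies one inclusion in each case, and Lemma~\ref{lemma:lifting} together with Theorem~\ref{thm:plabic_OPS} pins down the relevant cardinalities exactly, so the inclusions must be equalities.

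First I would read off the two inclusions straight from Definition~\ref{dfn:compatible}. For any edge $(\v_S,\v_T)$ of $\TPTiling_i$ the set $S\cup T$ lies in $\UP(\TPTiling_i)$ by definition, while the first bullet of compatibility guarantees that $\v_{S\cup T}$ is a vertex of $\TPTiling_{i+1}$, that is, $S\cup T\in\Vert(\TPTiling_{i+1})$. Hence $\UP(\TPTiling_i)\subseteq\Vert(\TPTiling_{i+1})$. Symmetrically, for each edge $(\v_S,\v_T)$ of $\TPTiling_{i+1}$ the set $S\cap T\in\DOWN(\TPTiling_{i+1})$, and the second bullet forces $\v_{S\cap T}$ to be a vertex of $\TPTiling_i$, so $\DOWN(\TPTiling_{i+1})\subseteq\Vert(\TPTiling_i)$. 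Both $\UP,\DOWN$ and $\Vert$ are honest sets of subsets of $[n]$, so these are genuine inclusions of finite sets.

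Next I would compute all four sizes. Since $\TPTiling_i$ has underlying collection $\Vert(\TPTiling_i)\subset{[n]\choose i}$ and $\TPTiling_{i+1}$ has $\Vert(\TPTiling_{i+1})\subset{[n]\choose i+1}$, Theorem~\ref{thm:plabic_OPS} gives $|\Vert(\TPTiling_i)|=i(n-i)+1$ and $|\Vert(\TPTiling_{i+1})|=(i+1)(n-i-1)+1$. On the other hand, Lemma~\ref{lemma:lifting} applied at level $i$ yields $|\UP(\TPTiling_i)|=(i+1)(n-i-1)+1$, and applied at level $i+1$ yields $|\DOWN(\TPTiling_{i+1})|=i(n-i)+1$. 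The two sizes in each pair coincide, so each inclusion from the previous paragraph is forced to be an equality, which is the assertion of the corollary. The point worth emphasizing is that there is essentially no obstacle remaining at this stage: all the combinatorial content---the Euler-characteristic computation for the modified tiling $\TPTiling_i'$ and the white-triangle count---has already been carried out in Lemma~\ref{lemma:lifting}, and the corollary is simply the observation that the sizes produced there match the Oh--Postnikov--Speyer count of Theorem~\ref{thm:plabic_OPS} at the two adjacent levels, upgrading the compatibility inclusions to equalities.
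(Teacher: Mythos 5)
Your proof is correct and is essentially identical to the paper's: the paper also derives the inclusion $\UP(\TPTiling_i)\subseteq\Vert(\TPTiling_{i+1})$ from the definition of compatibility and then invokes Lemma~\ref{lemma:lifting} together with Theorem~\ref{thm:plabic_OPS} to match the cardinalities, forcing equality. Nothing further is needed.
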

\begin{proof}
 By definition, the collection $\Vert(\TPTiling_{i+1})$ contains $\UP(\TPTiling_i)$. By Lemma~\ref{lemma:lifting}, their sizes coincide.
\end{proof}

By definition, the collection $\Vert(\TPTiling_i)$ is chord separated (and is maximal {by inclusion} in $[n]\choose i$). It turns out that given two compatible tilings, their vertices are chord separated from each other as well:
\begin{lemma}\label{lemma:compatible_levels}
 Let $\TPTiling_i,\TPTiling_{i+1}$ be two compatible triangulated plabic tilings of levels $i$ and $i+1$ respectively. Then the collection $\Vert(\TPTiling_i)\cup\Vert(\TPTiling_{i+1})$ is chord separated.
\end{lemma}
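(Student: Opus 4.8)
The plan is to reduce immediately to cross-level pairs and then exploit the fact, provided by compatibility, that every level-$(i+1)$ vertex is the union of two level-$i$ vertices joined by an edge. Since $\Vert(\TPTiling_i)$ and $\Vert(\TPTiling_{i+1})$ are each chord separated (being maximal by inclusion chord separated collections in $\binom{[n]}{i}$ and $\binom{[n]}{i+1}$ by Theorem~\ref{thm:plabic_OPS}), it suffices to show that an arbitrary $S\in\Vert(\TPTiling_i)$ and an arbitrary $T\in\Vert(\TPTiling_{i+1})$ are chord separated. First I would invoke Corollary~\ref{cor:lifting}, giving $T\in\UP(\TPTiling_i)$, so there is an edge $(\v_P,\v_Q)$ of $\TPTiling_i$ with $T=P\cup Q$. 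Writing $A=P\cap Q$, the edge form of a triangulated plabic tiling yields $P=Ap$, $Q=Aq$ and $T=Apq$ for distinct $p,q\notin A$; moreover $S,P,Q$ all lie in $\Vert(\TPTiling_i)$ and are therefore pairwise chord separated.

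Next I would argue by contradiction. If $S$ and $T$ were not chord separated, then after a cyclic rotation the obstruction is recorded as four cyclically ordered indices $a,b,c,d$ with $a,c\in S-T$ and $b,d\in T-S$. Since $P,Q\subset T$, we automatically have $a,c\in S-P$ and $a,c\in S-Q$. I would then split into cases by the location of $b,d$ inside $T-S=(A-S)\cup\{p,q\}$, using the observation that an element of $T-S$ lies in $P-S$ unless it equals $q$, and lies in $Q-S$ unless it equals $p$. Consequently, if $\{b,d\}\neq\{p,q\}$ then $b,d$ both lie in $P-S$ or both lie in $Q-S$, so $a,b,c,d$ is a crossing for the pair $(S,P)$ or for $(S,Q)$, contradicting chord separation of $S$ with $P$ or with $Q$.

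The remaining case $\{b,d\}=\{p,q\}$ is the crux, since here the crossing is invisible to $P$ and to $Q$ separately and must be handled by combining both. Say the cyclic order is $a,p,c,q$, and let $I_1$ be the open cyclic arc from $a$ to $c$ containing $p$ and $I_2$ the open arc from $c$ to $a$ containing $q$, so $I_1\sqcup I_2=[n]-\{a,c\}$. Because $a,c\in S-P$ and $p\in (P-S)\cap I_1$, chord separation of $S$ and $P$ forbids any element of $P-S$ from lying in $I_2$; as $P-S=(A-S)\cup\{p\}$ with $p\in I_1$, this forces $(A-S)\cap I_2=\emptyset$. Symmetrically, chord separation of $S$ and $Q$ forces $(A-S)\cap I_1=\emptyset$. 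Since $a,c\notin A$, every element of $A-S$ lies in $I_1\cup I_2$, whence $A-S=\emptyset$, i.e. $A\subseteq S$. But then $S=A\cup\{s\}$ for a single element $s$, while $a,c\in S-A$ forces $a=c=s$, contradicting $a\neq c$.

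I expect the main obstacle to be precisely this third case: the crossing genuinely splits across the two defining sets $P$ and $Q$, so neither pairwise chord separation statement applies on its own, and one must use both simultaneously together with the special edge form $P=Ap$, $Q=Aq$ to force $A\subseteq S$ and extract the contradiction. The reduction to cross-level pairs and the first two cases are routine. It is worth noting that only the $\UP$ half of Definition~\ref{dfn:compatible} is needed, entering through Corollary~\ref{cor:lifting}.
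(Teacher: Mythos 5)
Your proof is correct and is essentially the paper's argument run in the mirror direction: the paper takes the offending pair, lifts the level-$i$ set $T$ to an edge $(\v_{Tx},\v_{Ty})$ of $\TPTiling_{i+1}$ via the $\DOWN$ half of Corollary~\ref{cor:lifting} and forces $\{x,y\}$ to equal the two crossing elements lying in the level-$(i+1)$ set, whereas you descend from the level-$(i+1)$ set to an edge $(\v_{Ap},\v_{Aq})$ of $\TPTiling_i$ and force $\{b,d\}=\{p,q\}$. The endgames are dual but equally valid --- the paper produces a witness $z$ outside the lifted edge's union and places it in one of the two arcs, while you show $A-S$ meets neither arc and extract a cardinality contradiction --- so both proofs rest on exactly the same ingredients, namely the lifting corollary together with further applications of same-level chord separation.
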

\begin{proof}
 Suppose that $T\in\Vert(\TPTiling_i)$ and $S\in\Vert(\TPTiling_{i+1})$ are not chord separated, that is, there exist cyclically ordered integers $a,b,c,d\in [n]$ such that $a,c\in S-T, b,d\in T-S$. By Corollary~\ref{cor:lifting}, there is an edge in $\TPTiling_{i+1}$ connecting $Tx$ and $Ty$ for some $x\neq y\not\in T$. Both of these sets have to be chord separated from $S$ and the only way for this to be possible is if $x=a$ and $y=c$ (or vice versa). Note that $|Txy|=i+2$ and $|S|=i+1$, but $b,d\in Txy-S$. It means that $S\not\subset Txy$, so let $z\in S-Txy$. And because $x=a$ and $y=c$, we have $z\neq a,b,c,d$. If $z$ belongs to the cyclic interval $(d,b)$ then $S$ is not chord separated from $Tx=Ta$. Otherwise $z$ belongs to the cyclic interval $(b,d)$ in which case $S$ is not chord separated from $Ty=Tc$. This contradiction finishes the proof of the lemma.
\end{proof}

A family $\TPTFamily:=(\TPTiling_0,\TPTiling_1,\dots,\TPTiling_n)$ of triangulated plabic tilings such that the level of $\TPTiling_i$ is equal to $i$ and $\TPTiling_i$ and $\TPTiling_{i+1}$ are compatible for all $i=0,\dots,n-1$ is called \emph{admissible}.

 Now we go back to collections $\WS\subset 2^{[n]}$ without any restrictions on sizes. Given such a collection, for all $i=0,1,\dots, n$ denote $\WS^\ipar:=\WS\cap{[n]\choose i}$. For an admissible family $\TPTFamily:=(\TPTiling_0,\TPTiling_1,\dots,\TPTiling_n)$ of triangulated plabic tilings, denote 
 \[\Vert(\TPTFamily)=\cup_{i=0}^n\Vert(\TPTiling_i)\] 
 to be the set of all vertex labels of tilings in $\TPTFamily$. 
 
 \begin{proposition}\label{prop:families_collections}
 The map $\TPTFamily\mapsto\Vert(\TPTFamily)$ is a bijection between admissible families of triangulated plabic tilings and maximal {by inclusion} chord separated collections in $2^{[n]}$. In particular, if $\WS\subset 2^{[n]}$ is any maximal {by inclusion} chord-separated collection then  $\WS^\ipar\subset {[n]\choose i}$  is also maximal {by inclusion} (and by size), and if $\WS=\Vert(\TPTFamily)$ then 
 \[\WS^\ipar=\Vert(\TPTiling_i).\]
 \end{proposition}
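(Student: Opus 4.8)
The plan is to verify that $\TPTFamily\mapsto\Vert(\TPTFamily)$ is a bijection by checking it is well defined, injective, and surjective; the two ``in particular'' statements then fall out, since for a collection of the form $\WS=\Vert(\TPTFamily)$ one has $\WS^\ipar=\Vert(\TPTiling_i)$ directly from the definition of the vertex sets, and each $\Vert(\TPTiling_i)$ is maximal by inclusion by construction. The one genuinely easy half is maximality by inclusion of $\WS=\Vert(\TPTFamily)$: if $A\in{[n]\choose m}$ is not already in $\WS$, then since $\Vert(\TPTiling_m)=\WS^{(m)}$ is a maximal by inclusion chord separated collection in ${[n]\choose m}$ (Theorem~\ref{thm:plabic_OPS}), the set $A$ must fail to be chord separated from some member of $\Vert(\TPTiling_m)\subseteq\WS$. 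So maximality reduces entirely to the statement that $\WS$ is chord separated in the first place.

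Thus the first real task is to show that $\Vert(\TPTFamily)$ is chord separated. Sets at the same level are handled by Theorem~\ref{thm:plabic_OPS}, and sets at adjacent levels by Lemma~\ref{lemma:compatible_levels}. For $S\in\Vert(\TPTiling_i)$ and $U\in\Vert(\TPTiling_j)$ with $j-i\ge 2$ I would induct on the gap $j-i$. Assume they are not chord separated, say there is a cyclically ordered quadruple $a,b,c,d$ with $a,c\in S-U$ and $b,d\in U-S$ (the opposite case is symmetric). Using $\Vert(\TPTiling_j)=\UP(\TPTiling_{j-1})$ from Corollary~\ref{cor:lifting}, the vertex $U$ is the union of two sets forming an edge of $\TPTiling_{j-1}$, so at least two deletions $U-z$ lie in $\WS$. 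If some such $z\notin\{b,d\}$, then $S$ and $U-z$ are still not chord separated (the same quadruple survives) while the gap has dropped by one, contradicting the inductive hypothesis. Otherwise the only deletions of $U$ in $\WS$ are $U-b$ and $U-d$, and by induction each of these is chord separated from $S$. I would then run the following \emph{arc argument}: chord separation of $S$ and $U-b$ forces every element of $(U-S)\setminus\{b\}$ to lie on the arc between $a$ and $c$ that contains $d$, while chord separation of $S$ and $U-d$ forces every element of $(U-S)\setminus\{d\}$ to lie on the complementary arc containing $b$. Since $|U-S|=|U|-|U\cap S|\ge j-(i-2)\ge 4$, there is an element $e\in(U-S)\setminus\{b,d\}$, and it would have to lie on both arcs at once -- a contradiction. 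This closes the induction and is the technical engine of the whole argument.

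For surjectivity I would start from an arbitrary maximal by inclusion chord separated $\WS\subset 2^{[n]}$ and first prove that each slice $\WS^\ipar$ is maximal by inclusion in ${[n]\choose i}$: if an $i$-set $A\notin\WS$ were chord separated from all of $\WS^\ipar$, one shows it is chord separated from all of $\WS$ (contradicting maximality of $\WS$) by the same one-step-reduction-plus-arc argument as above, now descending a hypothetical violation through $\WS$ itself. Granting this, Theorem~\ref{thm:plabic_OPS} produces the plabic tilings $\PTiling(\WS^\ipar)$, and I would triangulate them canonically: inside a white clique $\WhCl(K)$ draw the chord $(\v_{Ka_p},\v_{Ka_q})$ exactly when $Ka_pa_q\in\WS^{(i+1)}$, and inside a black clique $\BlCl(L)$ draw $(\v_{L-b_p},\v_{L-b_q})$ exactly when $L-\{b_p,b_q\}\in\WS^{(i-1)}$. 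Chord separation of $\WS^{(i+1)}$ (resp.\ $\WS^{(i-1)}$) makes these chords pairwise non-crossing, and the Euler-characteristic count of Lemma~\ref{lemma:lifting}, together with the known size $|\WS^{(i+1)}|=(i+1)(n-i-1)+1$, forces the non-crossing chords to form a \emph{complete} triangulation. By construction the resulting tilings satisfy the compatibility conditions of Definition~\ref{dfn:compatible}, so the family $\TPTFamily=(\TPTiling_0,\dots,\TPTiling_n)$ is admissible with $\Vert(\TPTFamily)=\WS$.

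Finally, injectivity is straightforward once the triangulations are understood: $\Vert(\TPTFamily)$ determines every slice $\WS^\ipar=\Vert(\TPTiling_i)$, hence every underlying $\PTiling(\WS^\ipar)$; and the triangulating chords are recovered because within a white clique they are distinguished by their (distinct) unions, which lie in $\WS^{(i+1)}$, and within a black clique by their (distinct) intersections, which lie in $\WS^{(i-1)}$. I expect the main obstacle to be exactly the cross-level chord separation used in the second and third paragraphs -- that is, controlling how a violation between sets of very different sizes can be forced down to adjacent levels. The delicate point is the degenerate case in which the larger set cannot be lowered without destroying the violation; this is precisely where one must exploit the inequality $|U-S|\ge 4$ and invoke the arc-interleaving contradiction.
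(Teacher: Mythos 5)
Your forward direction is correct and is genuinely different from the paper's. The paper establishes that $\Vert(\TPTFamily)$ is chord separated only indirectly, by first building the zonotopal tiling (Proposition~\ref{prop:tilings_families}) and then invoking Ziegler's bijection with consistent subsets of ${[n]\choose 4}$; your induction on the level gap --- lowering the higher set $U$ through $\Vert(\TPTiling_j)=\UP(\TPTiling_{j-1})$ from Corollary~\ref{cor:lifting}, and, in the degenerate case where the only available deletions are $U-b$ and $U-d$, playing chord separation from $U-b$ against chord separation from $U-d$ via the two arcs cut out by $a$ and $c$ --- is an elementary replacement that works: since $a,c\in S-U$ one has $|U\cap S|\le i-2$, hence $|U-S|\ge (j-i)+2\ge 4$, which supplies the extra element of $U-S$ that must lie in both arcs at once. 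Your injectivity argument is also essentially sound, since the candidate chords are pairwise non-crossing and a triangulation contained in a non-crossing family of chords is unique.

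The gap is in surjectivity, and it sits exactly where the paper has to work hardest. First, you claim that each slice $\WS^\ipar$ of an \emph{arbitrary} maximal by inclusion $\WS$ is maximal by inclusion in ${[n]\choose i}$ ``by the same one-step-reduction-plus-arc argument.'' But that reduction requires knowing that every $U\in\WS^{(j)}$ admits at least two one-element deletions lying in $\WS^{(j-1)}$, so that either some deletion preserves the violating quadruple or both $U-b$ and $U-d$ are available for the arc argument. For an admissible family this comes from Corollary~\ref{cor:lifting}; for an arbitrary maximal by inclusion chord separated collection it is not available and is essentially equivalent to the structure you are trying to establish, so the descent never starts. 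Second, you assert that the non-crossing chords $(\v_S,\v_T)$ with $S\cup T,S\cap T\in\WS$ completely triangulate each polygon of $\PTiling(\WS^\ipar)$ ``by Euler count.'' The count in Lemma~\ref{lemma:lifting} presupposes that all white faces are already triangles; if some white $r$-gon with $r\ge 4$ is under-triangulated, the Euler computation only shows that fewer than $(i+1)(n-i-1)+1$ sets arise as unions over edges, which contradicts nothing, because at that stage nothing forces every element of $\WS^{(i+1)}$ to be such a union. Producing even one subdividing chord in a non-triangular polygon is precisely the content of the paper's Lemma~\ref{lemma:triangulation}, whose proof needs a substantive argument: projecting $\WS$ to a maximal chord separated collection $\WS'\subset 2^{[r]}$ supported on the polygon's vertex indices, and then an extremal ``maxgap'' argument producing a non-trivial two-element set of $\WS'$. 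Without these two ingredients the level-by-level reconstruction of $(\TPTiling_0,\dots,\TPTiling_n)$ from $\WS$ does not get off the ground.
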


Before we proceed to the proof, let us explain the connection between admissible families of triangulated plabic tilings and zonotopal tilings.

Consider the zonotope $\Zon(n,3)$ and its fine zonotopal tiling $\Tiling$. Recall that for each $i=0,\dots,n$, $\Hyp_i$ is the plane given by $z=i$. Denote $\Zon(n,3)^\ipar:=\Zon(n,3)\cap \Hyp_i$ and $\Tiling^\ipar:=\{\Face_X\cap \Hyp_i\mid X\in\Tiling\}.$ It is clear that $\Zon(n,3)^\ipar$ is a convex polygon and  $\Tiling^\ipar$ is a triangulation of $\Zon(n,3)^\ipar$ (which may have some vertices inside $\Zon(n,3)^\ipar$). Moreover, by definition, each vertex of $\Tiling$, and thus of $\Tiling^\ipar$, is labeled by a subset of $[n]$. 

\begin{proposition}\label{prop:tilings_families}
 The map $\Tiling\mapsto (\Tiling^\ipar)_{i=0}^n$ is a bijection between fine zonotopal tilings of $\Zon(n,3)$ and admissible families of triangulated plabic tilings.
\end{proposition}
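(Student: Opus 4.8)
The plan is to read off the sections tile by tile and then treat the two directions of the bijection $\Tiling\mapsto(\Tiling^\ipar)_{i=0}^n$ separately. Since each generator $\v_j$ of $\Cyclic(n,3)$ has first coordinate $1$, a vertex $\v_S$ of $\Tiling$ lies at height $|S|$, and a tile $\Face_X$ with $X^+=S$ and $|X^0|=m$ occupies the heights between $|S|$ and $|S|+m$. Running over $m=0,1,2,3$, a short computation shows: an edge $[\v_S,\v_{Sa}]$ meets $\Hyp_i$ only in a vertex (it spans a single unit of height and $i$ is an integer); a $2$-cell with $X^0=\{a,b\}$ meets $\Hyp_{|S|+1}$ in the diagonal segment $[\v_{Sa},\v_{Sb}]$; and a $3$-cell with $X^0=\{a,b,c\}$ meets $\Hyp_{|S|+1}$ in the white triangle on $\{Sa,Sb,Sc\}$ and $\Hyp_{|S|+2}$ in the black triangle on $\{Sab,Sac,Sbc\}$. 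Hence $\Vert(\Tiling^\ipar)=\{S\in\Vert(\Tiling):|S|=i\}$, every edge of $\Tiling^\ipar$ joins two labels differing by a single swap, and every triangle of $\Tiling^\ipar$ is white or black. Compatibility of $\Tiling^\ipar$ and $\Tiling^{(i+1)}$ in the sense of Definition~\ref{dfn:compatible} is then immediate: an edge $(\v_S,\v_T)$ of $\Tiling^\ipar$ is the mid-height slice of a $2$-cell whose top vertex is $\v_{S\cup T}$, so $\v_{S\cup T}\in\Vert(\Tiling^{(i+1)})$, and dually $\v_{S\cap T}\in\Vert(\Tiling^\ipar)$ for every edge of $\Tiling^{(i+1)}$.

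It remains to promote each $\Tiling^\ipar$ to a genuine triangulated plabic tiling, i.e. to show $\WS^\ipar:=\Vert(\Tiling^\ipar)$ is a maximal by inclusion chord separated collection, whence $\Tiling^\ipar$ is a triangulation of $\PTiling(\WS^\ipar)$. The white (resp.\ black) triangles from the previous paragraph glue along shared monochromatic edges into white (resp.\ black) cliques, since a shared edge $(\v_{Ka},\v_{Kb})$ determines its apex $K=Ka\cap Kb$; the planar dual of the resulting subdivision is a plabic graph $G_i$ whose face-label collection is $\WS^\ipar$. By Theorem~\ref{thm:face_labels} it then suffices to check that $G_i$ is reduced with $\pi_{G_i}=\sigma\parr{i,n}$. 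This is where the three-dimensional input is essential: the strands of $G_i$ are the horizontal sections of the pseudoplanes $P_j$ dual to $\Tiling$ (as in Section~\ref{sect:sections}), and the reducedness conditions of Definition~\ref{dfn:plabic}—no closed strands, no essential self-intersection, no bad double crossing—are forced by the topology of the genuinely embedded pseudoplane arrangement. Once reducedness (equivalently, the chord separation of $\WS^\ipar$) is established, Theorem~\ref{thm:plabic_OPS} gives $|\WS^\ipar|=i(n-i)+1$ and identifies $\Tiling^\ipar$ with a triangulated plabic tiling, so the slicing map does land in admissible families.

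Injectivity is then easy: a fine zonotopal tiling is determined by its set of $3$-cells, and the $3$-cell $\Face_X$ with $X^+=R$, $X^0=\{a,b,c\}$ is present in $\Tiling$ if and only if $\{Ra,Rb,Rc\}$ is a white triangle of $\Tiling^{(|R|+1)}$ (equivalently $\{Rab,Rac,Rbc\}$ is a black triangle of $\Tiling^{(|R|+2)}$), so $\Tiling$ is reconstructed from its sections. For surjectivity, start from an admissible family $\TPTFamily=(\TPTiling_0,\dots,\TPTiling_n)$; by Proposition~\ref{prop:families_collections} its vertex union $\WS=\Vert(\TPTFamily)$ is a maximal by inclusion chord separated collection with $\Vert(\TPTiling_i)=\WS\cap{[n]\choose i}$. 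Build a candidate tiling $\Tiling$ by declaring the top-dimensional tiles to be the parallelepipeds $\Face_X$ with $X^+=R$, $X^0=\{a,b,c\}$ for which $\{Ra,Rb,Rc\}$ is a white triangle of $\TPTiling_{|R|+1}$, together with all their faces; compatibility guarantees that the eight labels $R,Ra,\dots,Rabc$ all lie in $\WS$ and that the opposite face $\{Rab,Rac,Rbc\}$ is a black triangle of $\TPTiling_{|R|+2}$, so these parallelepipeds slice back correctly.

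The heart of the argument—and the step I expect to be the main obstacle—is the realizability check for this surjectivity construction: verifying that the combinatorial cube complex just described satisfies the two axioms of Definition~\ref{dfn:tilings_realizable}, namely that the parallelepipeds have pairwise disjoint interiors and cover $\Zon(n,3)$, and that any two meet in a common face. I would organize this as a sweep in the $z$-direction, gluing the slab between $\Hyp_i$ and $\Hyp_{i+1}$ using the compatibility of $\TPTiling_i$ with $\TPTiling_{i+1}$ and the fact that each $\TPTiling_i$ triangulates the whole polygon $\Zon(n,3)^\ipar$. Here the chord separation of $\WS$ furnished by Proposition~\ref{prop:families_collections} is exactly what rules out an improper overlap of two parallelepipeds, since any such overlap would produce two locally competing tilings of a cell combinatorially equivalent to $\Zon(4,3)$ and hence a pair of non-chord-separated labels, as in the two tilings of Figure~\ref{fig:mutation}. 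Since by construction the sections of the resulting $\Tiling$ recover $\TPTFamily$, this together with injectivity yields the bijection; alternatively, one may invoke the Bohne--Dress theorem to identify fine zonotopal tilings of $\Zon(n,3)$ with single-element liftings of the oriented matroid of $\Cyclic(n,3)$ and realize the family that way, but the direct sweep keeps the argument self-contained.
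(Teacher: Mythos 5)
Your architecture matches the paper's: slice $\Tiling$ by the planes $\Hyp_i$, show the sections form an admissible family, reconstruct the $3$-cells from the white triangles, and then verify the two axioms of Definition~\ref{dfn:tilings_realizable}. You also correctly identify the realizability check as the crux. But the two steps that carry all the difficulty are not actually proved, and in one of them the mechanism you propose is not the right one. First, to show each $\Tiling^\ipar$ is a triangulated plabic tiling you must show $\WS^\ipar:=\Vert(\Tiling^\ipar)$ is a \emph{maximal} chord separated collection in ${[n]\choose i}$; you route this through the assertion that the dual graph $G_i$ is reduced with $\pi_{G_i}=\sigma\parr{i,n}$ because reducedness is ``forced by the topology of the pseudoplane arrangement.'' That is not an argument (and even forming the dual plabic graph presupposes you already know the section is a plabic tiling, which is what you are trying to prove). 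The paper instead proves chord separation of the full collection $\Vert(\Tiling)$ (via the Radon partition $(\{a,c\},\{b,d\})$ and Ziegler's bijection with consistent subsets of ${[n]\choose 4}$), counts $|\Vert(\Tiling)|={n\choose 0}+{n\choose 1}+{n\choose 2}+{n\choose 3}$ by deletion-contraction, and compares with the bound $\sum_i(i(n-i)+1)$ to force every $\WS^\ipar$ to be maximal \emph{by size}. Some such counting step is needed; reducedness of $G_i$ does not come for free.

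Second, for the proper-intersection axiom your proposed mechanism --- that an improper overlap of two parallelepipeds ``would produce two locally competing tilings of a cell combinatorially equivalent to $\Zon(4,3)$'' --- is not what happens and will not close the gap. Two overlapping tiles $\Face_{S,a,b,c}$ and $\Face_{T,d,e,f}$ need not sit inside any common combinatorial $\Zon(4,3)$, and the obvious non-chord-separated pair of vertex labels one extracts from the overlap may live at levels differing by $2$ or $3$, where no chord separation statement is available at that point in the argument (Lemma~\ref{lemma:compatible_levels} only covers adjacent levels, and cross-level chord separation of $\Vert(\TPTFamily)$ is exactly what Proposition~\ref{prop:families_collections} --- whose proof depends on this proposition --- would give you, so invoking it here risks circularity). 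The paper's proof equates the two convex-combination expressions of an interior intersection point, extracts a Radon partition $C=(C^+,C^-)$, and then runs a delicate case analysis on the floor and ceiling of the common height $h$ and on $|C^+\cap\{d,e,f\}|$ to manufacture sets $S'\in\Vert(\Face_{S,a,b,c})$, $T'\in\Vert(\Face_{T,d,e,f})$ that are not chord separated \emph{and} whose levels differ by at most one, so that Lemma~\ref{lemma:compatible_levels} applies. That level-adjustment argument is the heart of the proof and is absent from your sketch; the covering claim (your ``sweep'') also needs the paper's closest-$2$-face argument rather than an appeal to compatibility alone.
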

\begin{proof}
First we show that $(\Tiling^\ipar)_{i=0}^n$ is an admissible family of triangulated plabic tilings. Let $\WS:=\Vert(\Tiling)$ and suppose that $\WS$ is not a chord separated collection. Then there must exist integers $a<b<c<d\in [n]$ and two sets $S,T\in\WS$ such that $a,c\in S-T$ and $b,d\in T-S$. Recall from Section~\ref{sect:sections} that fine zonotopal tilings of $\Zon(n,3)$ are dual to arrangements of pseudoplanes. Let $(\PSP_1,\dots,\PSP_n)$ be the pseudoplane arrangement dual to $\Tiling$. Since $(\{a,c\},\{b,d\})$ is a Radon partition of of $\Cyclic(n,3)$, the pseudoplanes $P_a,P_b,P_c,P_d$ bound a certain simplicial region $R$ in $\R^3$, and both vertices $\v_S$ and $\v_T$ have to be inside $R$. This leads to a contradiction because if $S$ and $T$ belong to the same region, their restrictions on $\{a,b,c,d\}$ must also coincide. A more rigorous argument can be given using Ziegler's bijection\footnote{this is basically an application of Bohne-Dress theorem: $\Tiling$ defines a localization in general position of the dual matroid, and this localization is the same thing as $\phi(\Tiling)$ viewed as a map from the cocircuits of the dual matroid to $\{+,-\}$.} $\phi$ between fine zonotopal tilings of $\Zon(n,3)$ and \emph{consistent} subsets of $[n]\choose 4$. One easily observes that under this bijection, $\phi(\Tiling)$ contains $\{a,b,c,d\}$ if and only if there is a subset $U\in\Vert(\Tiling)$ with $a,c\in U$ and $b,d\not\in U$. Similarly, $\phi(\Tiling)$ does not contain $\{a,b,c,d\}$ if and only if there is a subset $U\in\Vert(\Tiling)$ with $b,d\in U$ and $a,c\not\in U$. The fact that $\phi(\Tiling)$ is well defined implies that $S$ and $T$ cannot both appear in $\Vert(\Tiling)$. 

A simple deletion-contraction argument shows that the number of vertices of $\Tiling$ equals the number of independent sets of the corresponding oriented matroid. In other words, the vertex labels $\WS:=\Vert(\Tiling)$ form a chord-separated collection in $2^{[n]}$ satisfying
 \[|\WS|={n\choose 0}+ {n\choose 1}+{n\choose 2}+{n\choose 3}.\]
Note that since $\WS$ is a chord separated collection, $\WS^\ipar$ is a weakly separated collection in ${[n]\choose i}$,
and hence has size at most $i(n-i)+1$. Summing over all $i$, we get
\[\sum_{i=0}^n|\WS^\ipar|\leq \sum_{i=0}^n (i(n-i)+1)=\frac{n^3+5n+1}6.\]
One easily checks that this is equal to 
\[{n\choose 0}+ {n\choose 1}+{n\choose 2}+{n\choose 3}=|\WS|.\]
Thus all the inequalities become equalities and each $\WS^\ipar$ is a maximal \emph{by size} weakly separated collection in ${[n]\choose i}$. 

Next, observe that for each top-dimensional zonotope $\Face_X$ in $\Tiling$, the set $X^0$ has exactly three elements, say, $a,b,c\in[n]$, and thus all vertices of $\Face_X$ have the form 
\[\Vert(\Face_X)=\{\v_S,\v_{Sa},\v_{Sb},\v_{Sc},\v_{Sab},\v_{Sbc},\v_{Sac},\v_{Sabc}\}\]
for $S=X^+$. Denote $k:=|S|$, then the only non-empty intersections of $\Face_X$ with planes $\Hyp_i$ happen for $i=k,k+1,k+2,k+3$. Moreover, for $i=k$ and $i=k+3$, the intersection is just a single vertex, and for $i=k+1,k+2$, the intersection is a triangle. For example, for $i=k+1$, this triangle has vertices $\v_{Sa},\v_{Sb},\v_{Sc}$, and thus is a subset of the corresponding white clique $\WhCl(S)$ in $\PTiling(\WS^\ipar)$. Similarly, for $i=k+2$, the triangle lies inside a black clique $\BlCl(Sabc)$ in $\PTiling(\WS^\ipar)$. Thus $(\Tiling^\ipar)_{i=0}^n$ indeed form a family of triangulated plabic tilings. For every edge connecting, for example, $\v_{Sa}$ and $\v_{Sb}$, $\v_{Sab}$ is a vertex on level $k+2$ and $\v_{S}$ is a vertex on level $k$, and similarly, for every edge of the form $(\v_{Sab},\v_{Sbc})$, $\v_{Sb}$ is a vertex on level $k+1$ and $\v_{Sabc}$ is a vertex on level $k+3$. Thus the family $(\Tiling^\ipar)_{i=0}^n$ is admissible, and we have finished one direction of the proposition.

Now assume that we are given an admissible family $\TPTFamily$ of triangulated plabic tilings, and we want to construct a fine zonotopal tiling $\Tiling$ of $\Zon(n,3)$ satisfying $\Tiling^\ipar=\TPTFamily^\ipar$ for all $i=0,1,\dots, n$. It suffices to describe the top-dimensional tiles $\Face_X$ of $\Tiling$. As before, each such tile can be indexed by a subset $S=X^+$ and three elements $a,b,c$ with $\{a,b,c\}=X^0$, so we denote it $\Face_{S,a,b,c}$. We are ready to describe all top-dimensional faces of $\Tiling$: they are all faces of the form $\Face_{S,a,b,c}$ where $Sa,Sb,$ and $Sc$ are the vertex labels of a \emph{white} triangle in one of the tilings in $\TPTFamily$. 

So far, we only know that thus defined $\Tiling$ is a collection of parallelotopes, and we need to see why their union is the whole $\Zon(n,3)$ and why the intersection of any two of them is their common face. We start with the latter.

\begin{claim}
The intersection of any two tiles is in $\Tiling$ is either empty or their common face. 
\end{claim}
\begin{claimproof}
We will only consider the case of two top-dimensional tiles intersecting in their relative interior, the case of lower-dimensional tiles is handled similarly. Consider two tiles $\Face_{S,a,b,c}$ and $\Face_{T,d,e,f}$ and suppose that their interiors intersect on some point $p\in\Face_{S,a,b,c}\cap\Face_{T,d,e,f}$. Then there exist numbers $0<\l_a,\l_b,\l_c,\m_d,\m_e,\m_f<1$ satisfying
\begin{equation}\label{eq:convex_comb}
 \sum_{s\in S} 1\cdot \v_s+\l_a\v_a+\l_b\v_b+\l_c\v_c=\sum_{t\in T} 1\cdot \v_t+\m_d\v_d+\m_e\v_e+\m_f\v_f.
\end{equation}
We are going to construct two subsets $S'\in\Vert(\Face_{S,a,b,c})$ and $T'\in \Vert(\Face_{T,d,e,f})$ that are not chord separated but such that their sizes differ by at most $1$. Let $X=(X^+,X^-)$ be the signed vector defined as follows: $X^+$ contains all elements $g$ of $[n]$ such that the coefficient of $\v_g$ in the left hand side of~(\ref{eq:convex_comb}) is strictly greater than the coefficient of $\v_g$ in the right hand side of~(\ref{eq:convex_comb}). The set $X^-$ is defined similarly, replacing left with right. Since the left hand side equals the right hand side, there exist cyclically ordered integers $p,q,r,s$ such that $C^+:=\{p,r\}\subset X^+$ and $C^-:=\{q,s\}\subset X^-$ (such a signed set $C:=(C^+,C^-)$ is called a \emph{Radon partition} of $\Cyclic(n,3)$). 

It is clear that $C^+\subset Sabc-T$ while $C^-\subset Tdef-S$. Thus there exists at least one pair of subsets $S'\in\Face_{S,a,b,c}$ and $T'\in \Face_{T,d,e,f}$ such that $C^+\subset S'-T'$ and $C^-\subset T'-S'$. We want to choose $S'$ and $T'$ so that their levels differ by at most $1$. Suppose this cannot be done, and that the level of any such $S'$ is, say, greater than the level of any such $T'$ by at least $2$. Then we set $S':=S\cup C^+$ and $T':=Tdef-C^+$. These two sets clearly satisfy $C^+\subset S'-T'$ and $C^-\subset T'-S'$, so if we show that $|S'|\leq |T'|+1$ then we will get a contradiction.

Let $h$ be the sum of the coefficients in any of the sides of~(\ref{eq:convex_comb}) (they are equal since every vector from $\Cyclic(n,3)$ has height $1$). If $h$ is an integer then the bad intersection would happen inside one triangulated plabic tiling which is impossible by Theorem~\ref{thm:plabic_OPS}. Thus let $h^-<h<h^+$ be the floor and the ceiling of $h$. 

We are going to consider a number of cases:
\begin{enumerate}
 \item\label{item:caseSTclose} $|S'|\leq h^+$ and $|T'|\geq h^-$;
 \item \label{item:caseTsmall} $|T'|<h^-$;
 \item \label{item:caseSbig} $|S'|>h^+$.
\end{enumerate}

For each of them, we need to show that $|S'|\leq |T'|+1$. Case~(\ref{item:caseSTclose}) implies that trivially. Cases (\ref{item:caseTsmall}) and (\ref{item:caseSbig}) are symmetric with respect to taking complements of all sets, so we only need to consider one of them. From now on, we assume that $|S'|>h^+$. Recall that $S'=S\cup C^+$ where $C^+$ is a two-element set. Thus $|S'|\leq |S|+2$. Now, the integers $|S|, h^+,$ and $|S'|$ satisfy (since $|S|\leq h$ and $h<h^+$)
\[|S|<h^+<|S'|\leq |S|+2\]
which implies that 
\[|S|=h^-;\quad |S|+1=h^+;\quad |S|+2=|S'|.\]
In particular, the last equality implies that $C^+\cap S=\emptyset$, in other words, that $C^+\subset \{a,b,c\}$. Without loss of generality we may assume that $C^+=\{a,b\}$. It follows that $\l_a+\l_b<h-|S|$ because
\[h=|S|+\l_a+\l_b+\l_c>|S|+\l_a+\l_b.\]
%
%
%
%

Our goal is to show that $|T'|\geq h^+$. We are going to consider three cases:
\begin{enumerate}[(a)]
 \item \label{item:Cminus2} $|C^+\cap \{d,e,f\}|=2$;
 \item \label{item:Cminus1} $|C^+\cap \{d,e,f\}|=1$;
 \item \label{item:Cminus0} $|C^+\cap \{d,e,f\}|=0$;
\end{enumerate}

First consider case~(\ref{item:Cminus2}), say, $d=a$ and $e=b$. By definition, we then have $|T'|=|T|+1$. Since $a,b\in C^+\subset X^+$, the coefficients of $\v_a$ and $\v_b$ in the right hand side are less than the corresponding coefficients in the left hand side. We get that 
\[\m_d+\m_e<\l_a+\l_b<h-|S|.\]
But this implies that $|T|\geq |S|$, because otherwise if $|T|\leq |S|-1$ then 
\[h=|T|+\m_d+\m_e+\m_f< |S|-1+(h-|S|)+\m_f< h,\]
which is impossible. Thus $|T|\geq |S|=h^-$ and since $|T'|=|T|+1$, we get that $|T'|\geq h^+$. We are done with case~(\ref{item:Cminus2}). 

Now consider case~(\ref{item:Cminus1}), say, $d=a$. Then by definition, $|T'|= |T|+2$. Also, since $a\in C^+\subset X^+$, we have $\m_d<\l_a<h-|S|$. We need to show $|T'|\geq h^+$, equivalently, $|T|\geq |S|-1$. This is indeed true, because otherwise if $|T|\leq |S|-2$ then 
\[h=|T|+\m_d+\m_e+\m_f< |S|-2+(h-|S|)+\m_e+\m_f< h,\]
which is impossible. Thus we are done with the case~(\ref{item:Cminus1}), and the case~(\ref{item:Cminus0}) is trivial since for this case, $|T'|=|T|+3>h$ and so $|T'|\geq h^+$. We have finished the proof of the fact that $S'$ and $T'$ can be chosen so that their levels differ by at most $1$. But they are not chord separated which contradicts Lemma~\ref{lemma:compatible_levels}.
\end{claimproof}

\begin{claim}
 The union $U$ of all tiles in $\Tiling$ equals $\Zon(n,3)$.
\end{claim}
\begin{claimproof}
Observe that $U$ is a closed polyhedral complex inside $\Zon(n,3)$ that contains the boundary of $\Zon(n,3)$, because the vertices of the boundary are labeled by all cyclic intervals.

Assume that $U\subsetneq \Zon(n,3)$ and let $p\in\Zon(n,3)- U$ be any point. Choose a generic point $q$ in the interior of any tile of $\Tiling$, and draw a segment $[p,q]$. Let $\Face_X$ be the closest to $p$ $2$-dimensional face of $\Tiling$ that intersects $[p,q]$, and let $r$ be their intersection point. Such a face has vertices $\v_S,\v_{Sa},\v_{Sb},\v_{Sab}$ for some $S\subset [n]$ and $a,b\in [n]$. But then there is an edge between $\v_{Sa}$ and $\v_{Sb}$ in the corresponding triangulated plabic tiling $\TPTiling_k$ where $k=|S|+1$. This edge is the common edge of two triangles, and each of them belongs to a three-dimensional tile, so we can see that $\Face_X$ is the common face of these two tiles. Thus $r$ is contained in $U$ together with  some open neighborhood, which leads to a contradiction, thus finishing the proof or  the claim.
\end{claimproof}
Proposition~\ref{prop:tilings_families} follows from the two claims above.
\end{proof}

\begin{proof}[Proof of Proposition~\ref{prop:families_collections}]
 As we have shown in the proof of Proposition~\ref{prop:tilings_families}, if $\TPTFamily$ is an admissible family of triangulated plabic tilings then $\Vert(\TPTFamily)$ is a maximal \emph{by size} chord separated collection in $2^{[n]}$. In particular, it is also maximal \emph{by inclusion}, which shows one direction of the proposition. Assume now that we are given a maximal \emph{by inclusion} chord separated collection $\WS\subset 2^{[n]}$, and we need to reconstruct an admissible family $\TPTFamily$ with $\Vert(\TPTFamily)=\WS$. Note that it is not even a priori clear why $\WS^\ipar$ is maximal by inclusion in ${[n]\choose i}$. But we claim that there exists a unique admissible family $\TPTFamily$ such that:
 \begin{itemize}
  \item the vertex labels of $\TPTFamily$ are precisely the sets in $\WS$;
  \item $(\v_S,\v_T)$ is an edge of $\TPTiling_i$ if and only if 
  \begin{equation}\label{eq:edges}
   |S|=|T|=i,\  |S\cap T|=i-1,\ |S\cup T|=i+1,\ \text{and } S\cap T,S\cup T\in\WS. 
  \end{equation}
 \end{itemize}

 We are going to reconstruct the triangulated plabic tilings $\TPTiling_i$ one by one for $i=0,1,\dots, n$. For $i=0$, $\TPTiling_0$ consists of the unique vertex labeled by $\emptyset$. For $i=1$, all the one-element sets have to belong to $\WS$ because it is maximal \emph{by inclusion} and one-element sets are chord separated from all other sets. Thus $\WS^{(1)}$ is a maximal \emph{by inclusion} collection in ${[n]\choose 1}$. Our induction on $i$ is going to be based on the following lemma:
 
 \begin{lemma}\label{lemma:triangulation}
  Suppose that $\WS\subset 2^{[n]}$ is a maximal \emph{by inclusion} chord separated collection, and that there exists an $i\in[n]$ such that $\WS^\ipar$ is a maximal \emph{by inclusion} collection in ${[n]\choose i}$, so that it corresponds to a plabic tiling $\PTiling(\WS^\ipar)$ via Theorem~\ref{thm:plabic_OPS}. Then (\ref{eq:edges}) gives a triangulation of the polygons of $\PTiling(\WS^\ipar)$, therefore transforming it into a triangulated plabic tiling $\TPTiling_i$.
 \end{lemma}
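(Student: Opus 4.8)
The plan is to prove the lemma one polygonal face of $\PTiling(\WS^\ipar)$ at a time, showing that the edges singled out by~(\ref{eq:edges}) cut each face into triangles. Chord separation, maximality of $\WS$, and the relation~(\ref{eq:edges}) are all preserved by the complementation $S\mapsto [n]-S$, which carries the white clique $\WhCl(K)$ of $\WS$ at level $i$ to the black clique $\BlCl([n]-K)$ of the complemented collection at level $n-i$. Hence it suffices to prove that~(\ref{eq:edges}) triangulates every \emph{white} clique, the black cliques of $\WS$ being the white cliques of $\{[n]-S\mid S\in\WS\}$. So fix a non-trivial white clique $\WhCl(K)=\{Ka_1,\dots,Ka_r\}$, $r\geq 3$, with $a_1<\dots<a_r$ cyclically; its polygon has vertices $\v_{Ka_1},\dots,\v_{Ka_r}$.

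The first step is that $K\in\WS$. If not, maximality gives $U\in\WS$ not chord separated from $K$, i.e.\ cyclically ordered $a,b,c,d$ with $a,c\in K-U$ and $b,d\in U-K$. Since $r\geq 3$ we may choose $a_m\notin\{b,d\}$; then $a,c\in Ka_m-U$ and $b,d\in U-Ka_m$, contradicting that $Ka_m,U\in\WS$ are chord separated. Thus $K\in\WS$, so an edge~(\ref{eq:edges}) joining $Ka_j$ and $Ka_{j'}$ is present exactly when $Ka_ja_{j'}\in\WS$, and I encode it by the chord $\{a_j,a_{j'}\}$ of the $r$-gon. The second step records that for two $(i+1)$-sets $Ka_ja_{j'},Ka_la_{l'}\in\WS$ sharing $K$, the only possible obstruction to chord separation is that $a_j,a_{j'},a_l,a_{l'}$ are distinct and cyclically interleaved; hence these two sets are chord separated iff the chords $\{a_j,a_{j'}\}$ and $\{a_l,a_{l'}\}$ do not cross. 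Therefore the edges~(\ref{eq:edges}) inside the clique form a family of pairwise non-crossing chords.

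It remains to see this non-crossing family is a full triangulation: it contains every boundary edge $\{a_j,a_{j+1}\}$ and leaves no region with more than three sides. Both follow from one statement, the \emph{addability claim}: if $\{a_j,a_{j'}\}$ is a boundary edge or a diagonal crossing none of the present chords, then $Ka_ja_{j'}$ is chord separated from \emph{every} $U\in\WS$. Granting it, maximality of $\WS$ forces $Ka_ja_{j'}\in\WS$, so no boundary edge is missing and no region is a $\geq 4$-gon (any diagonal of such a region is non-crossing, hence present). To prove the addability claim I would suppose $Ka_ja_{j'}$ and $U$ are not chord separated, witnessed by cyclically ordered $\alpha,\beta,\gamma,\delta$ with $\alpha,\gamma\in Ka_ja_{j'}-U$ and $\beta,\delta\in U-Ka_ja_{j'}$, and split on how $\{\alpha,\gamma\}$ meets $K\cup\{a_j,a_{j'}\}$. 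If $\alpha,\gamma\in K$, or if exactly one of them lies in $K$, the witness transplants verbatim to a crossing between $U$ and one of the clique vertices $Ka_j,Ka_{j'}$, contradicting chord separation of two members of $\WS$.

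The genuine obstacle is the last case $\{\alpha,\gamma\}=\{a_j,a_{j'}\}$, where $U$ separates the two new elements from the outside ($a_j,a_{j'}\notin U$, $\beta,\delta\in U$, interleaved as $a_j,\beta,a_{j'},\delta$). Here a single clique vertex no longer produces a contradiction, and the non-crossing hypothesis must be used: from this interleaving I must manufacture a \emph{present} chord $\{a_l,a_{l'}\}$ crossing $\{a_j,a_{j'}\}$, which contradicts non-crossing. The clean route I expect to take is to contract $K$: the members of $\WS$ containing $K$, with $K$ deleted, form a chord separated collection in $[n]-K$ in which the clique vertices become singletons and the present chords become pairs, reducing this case to the two-dimensional (strongly separated) situation, where the crossing chord is forced by the local maximality of the arrangement. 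Checking that maximality descends to this contraction, and extracting the crossing chord, is the technical heart of the argument; the two easy cases and the reduction to the addability claim are routine by comparison. Once the addability claim holds, the non-crossing edges~(\ref{eq:edges}) triangulate every clique and so turn $\PTiling(\WS^\ipar)$ into a triangulated plabic tiling $\TPTiling_i$, as claimed.
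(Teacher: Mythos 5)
Your reduction steps are sound and match the paper's: the complementation argument reducing to white cliques, the claim that $K\in\WS$ (proved the same way, by perturbing a witness with a third clique element), and the observation that two sets $Ka_ja_{j'},Ka_la_{l'}\in\WS$ are chord separated iff the chords $\{a_j,a_{j'}\}$, $\{a_l,a_{l'}\}$ do not cross, so the present edges are pairwise non-crossing. The logical frame via the ``addability claim'' is also a legitimate (indeed slightly stronger) way to conclude, and your two easy cases of that claim are handled correctly. The gap is exactly where you say the ``technical heart'' lies, and what you sketch there would not go through as described. First, the assertion that contracting $K$ reduces the hard case ``to the two-dimensional (strongly separated) situation'' is a mischaracterization: the contracted collection $\WS'=\{T'\subset[r]\mid S\cup\{a_j\}_{j\in T'}\in\WS\}$ is only \emph{chord} separated, contains sets of all sizes, and is governed by the three-dimensional theory this lemma is building, not by the Leclerc--Zelevinsky pseudoline picture; no ``local maximality of a line arrangement'' is available to force the crossing chord.

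Second, two substantive arguments are missing and neither is routine. (i) That maximality descends to the contraction is the content of a full Claim in the paper: one must show that if $T'\subset[r]$ is chord separated from all projections of members of $\WS$ containing $S$, then $S\cup\{a_j\}_{j\in T'}$ is chord separated from \emph{every} $R\in\WS$; this requires proving $S\subset R$ for any violating $R$ (using the clique vertices $Sb,Sd$) and then replacing the witnesses $a,c$ by elements of $\{a_1,\dots,a_r\}$ using the sets $Sa_ja_{j+1}\in\WS$. Your witness $U$ in the hard case need not contain $K$ and may use elements outside $K\cup\{a_1,\dots,a_r\}$, so this step cannot be skipped. (ii) Even after contracting, one still needs the purely combinatorial fact that a maximal chord separated collection in $2^{[r]}$ containing all cyclic intervals must contain a non-trivial two-element set; the paper proves this by the $\maxgap$ extremal argument, which is the real engine of the lemma and has no analogue in your sketch. (Note the paper only needs \emph{existence} of one diagonal per big region and then argues by contradiction, which is weaker than your full addability claim; your stronger claim is true a posteriori but you would still have to route its proof through (i) and (ii).) Until those two pieces are supplied, the proof is incomplete.
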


Let us first show that Lemmas~\ref{lemma:triangulation} and~\ref{lemma:lifting} together imply Proposition~\ref{prop:families_collections}. As we have already shown, $\WS^{(1)}$ is a maximal \emph{by inclusion}  chord separated collection in ${[n]\choose 1}$. Then  Lemma~\ref{lemma:triangulation} gives a unique triangulated plabic tiling $\TPTiling_1$. But then Lemma~\ref{lemma:lifting} states that $\WS^{(2)}\supset \UP(\TPTiling_1)$ is a maximal \emph{by size} (and thus by inclusion) chord separated collection in ${[n]\choose 2}$, after which Lemma~\ref{lemma:triangulation} produces $\TPTiling_2$. Continuing in this fashion, we reconstruct the whole family $\TPTFamily$, and it is compatible by construction, while, clearly, $\Vert(\TPTFamily)=\WS$, which finishes the proof of Proposition~\ref{prop:families_collections}.

\begin{proof}[Proof of Lemma~\ref{lemma:triangulation}]
Let $\WS\subset 2^{[n]}$ and $\WS^\ipar\subset {[n]\choose i}$ be maximal \emph{by inclusion} chord separated collections. We want to show that (\ref{eq:edges}) gives a triangulation of $\PTiling(\WS^\ipar)$. We are only going to show that it gives a triangulation of white cliques of $\PTiling(\WS^\ipar)$, for black cliques, one can just replace all sets in $\WS$ by their complements and then apply the statement for white cliques.

Our first goal is to show that every edge $(\v_S,\v_T)$ of $\PTiling(\WS^\ipar)$ satisfies~(\ref{eq:edges}).

\begin{claim}
 Suppose $\WS\subset 2^{[n]}$ and $\WS^\ipar$ are maximal \emph{by inclusion} chord separated collections, and assume that $\WhCl(S)$ is a non-trivial white clique in $\PTiling(\WS^\ipar)$. Then $S\in\WS$.
\end{claim}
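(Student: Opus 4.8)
The plan is to show that $S$ is chord separated from every element of $\WS$; since $\WS$ is maximal \emph{by inclusion}, this forces $S\in\WS$. First I would record the shape of the clique: as $\WhCl(S)$ is non-trivial, it has the form $\{Sa_1,\dots,Sa_r\}$ with distinct $a_1,\dots,a_r\in[n]-S$ and $r\geq 3$, and every $Sa_j\in\WS$. The guiding observation is that each $Sa_j$ contains $S$, so any failure of chord separation between $S$ and some $U\in\WS$ ought to propagate to the sets $Sa_j$; the hypothesis $r\geq 3$ is exactly what provides enough room to make this propagation genuine.

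Next I would argue by contradiction. Suppose $S$ is not chord separated from some $U\in\WS$, witnessed by a cyclically ordered quadruple $a,b,c,d$. There are two cases, according to which alternating pair lies in $S-U$. If $a,c\in S-U$ and $b,d\in U-S$, then since $a,c\in S\subset Sa_j$ and $a,c\notin U$ we always have $a,c\in Sa_j-U$, while $b,d\in U-Sa_j$ holds precisely when $a_j\notin\{b,d\}$. In the \emph{vice versa} case $b,d\in S-U$ and $a,c\in U-S$, the symmetric computation gives $b,d\in Sa_j-U$ for every $j$ and $a,c\in U-Sa_j$ whenever $a_j\notin\{a,c\}$. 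In both cases the two coordinates that an index $a_j$ could possibly ``destroy'' are exactly the pair lying on the $U$-side of the quadruple.

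Then I would invoke pigeonhole: that $U$-side pair has two elements, the $a_j$ are distinct, and $r\geq 3$, so there is at least one index $j$ with $a_j$ outside this pair. For that $j$, the quadruple $a,b,c,d$ witnesses that $Sa_j$ and $U$ are not chord separated, contradicting $Sa_j,U\in\WS$. Hence $S$ is chord separated from all of $\WS$, and maximality by inclusion yields $S\in\WS$.

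The only delicate point---and really the heart of the argument---is the bookkeeping in the previous paragraph: one must correctly identify, in each of the two forms of the obstruction, which pair of $a,b,c,d$ sits on the ``$S$-side'' (hence is automatically inherited via $S\subset Sa_j$) and which sits on the ``$U$-side'' (hence is the only pair an element $a_j$ can spoil). Once this is settled, the pigeonhole with $r\geq 3$ is immediate, and no input beyond maximality of $\WS$ together with the explicit description of white cliques is needed.
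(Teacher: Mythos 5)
Your argument is correct and is essentially the paper's proof: both reduce to finding, among the $r\geq 3$ distinct elements $a_1,\dots,a_r$, one $a_j$ avoiding the two-element pair of the violating quadruple that lies outside $S$, so that the same quadruple witnesses non-chord-separation of $Sa_j$ and $U$, contradicting $Sa_j,U\in\WS$. The only cosmetic difference is that the paper normalizes the quadruple to a single case by a cyclic relabeling, whereas you track both orientations explicitly; the bookkeeping in each case is right.
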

\begin{claimproof}
 Let $\WhCl(S)=\{Sa_1,Sa_2,\dots,Sa_r\}$ for some $r\geq 3$. Suppose we have found $T\in\WS$ and $a,b,c,d$ cyclically ordered such that $a,c\in T-S$ and $b,d\in S-T$. Since $r\geq 3$, we can find an index $j\in [r]$ such that $a_j\neq a,c$. Then $a,c\in T-Sa_j$ and $b,d\in Sa_j-T$ so $Sa_j$ and $T$ are not chord separated which is impossible since they both belong to $\WS$.
\end{claimproof}

\begin{claim}
 Suppose $\WS\subset 2^{[n]}$ and $\WS^\ipar$ are maximal \emph{by inclusion} chord separated collections, and assume that $\BlCl(S)$ is a non-trivial black clique in $\PTiling(\WS^\ipar)$. Then $S\in\WS$.
\end{claim}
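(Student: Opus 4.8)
The plan is to deduce this black-clique claim from the white-clique claim just proved, using the fact (already invoked in the proof of Lemma~\ref{lemma:lifting}) that chord separation is preserved under taking complements. Passing to complements converts black cliques into white cliques while changing the level from $i$ to $n-i$, so no new argument is needed.

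Concretely, I would set $\WS':=\{[n]-R\mid R\in\WS\}$ and record three facts. First, since $R\mapsto [n]-R$ is an involution of $2^{[n]}$ preserving chord separation, $\WS'$ is again a maximal \emph{by inclusion} chord separated collection, and its slice $\WS'^{(n-i)}=\{[n]-T\mid T\in\WS^\ipar\}$ is a maximal \emph{by inclusion} chord separated collection in ${[n]\choose n-i}$. Second, the plabic tiling complex $\PTiling(\cdot)$ is defined purely combinatorially from the underlying collection, and complementation carries $\PTiling(\WS^\ipar)$ to $\PTiling(\WS'^{(n-i)})$ while interchanging white and black cliques: for $|S|=i+1$ one has $T\subset S\iff [n]-T\supset [n]-S$, so the map $T\mapsto [n]-T$ sends the black clique $\BlCl(S)$ bijectively onto the white clique $\WhCl([n]-S)$ of $\WS'^{(n-i)}$, whose label $[n]-S$ has the correct size $(n-i)-1$. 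Third, this bijection preserves cardinality, so $\BlCl(S)$ being non-trivial forces $\WhCl([n]-S)$ to be non-trivial as well.

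Given these facts, I would apply the white-clique claim to the collection $\WS'$ at level $n-i$: it yields $[n]-S\in\WS'$, which is exactly $S\in\WS$, as desired. The only point requiring care is the level bookkeeping---checking that the complement of an $(i+1)$-element black-clique label is an $((n-i)-1)$-element white-clique label, so that the previous claim applies verbatim---but this is immediate, and there is no real obstacle.

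Alternatively, one can give a direct proof mirroring the white-clique case. Suppose $S\notin\WS$; since $\WS$ is maximal \emph{by inclusion}, there is some $T\in\WS$ together with a cyclically ordered quadruple witnessing that $S$ and $T$ are not chord separated. Writing $\BlCl(S)=\{S-b_1,\dots,S-b_s\}$ with $s\ge 3$, one deletes a $b_j$ avoiding the two elements of $S-T$ appearing in the quadruple (possible since $s\ge 3$); the same quadruple then witnesses that $S-b_j\in\WS$ is not chord separated from $T\in\WS$, a contradiction. I expect the complementation route to be the cleaner writeup, since it reuses the white-clique claim without reproducing its argument.
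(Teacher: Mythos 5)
Your primary argument is exactly the paper's proof, which consists of the single line ``Follows from the previous claim by replacing all subsets with their complements''; you have merely filled in the (correct) bookkeeping about how complementation acts on maximal chord separated collections, levels, and cliques. Your alternative direct argument is also valid, but the complementation route is what the paper uses.
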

\begin{claimproof}
 Follows from the previous claim by replacing all subsets with their complements.
\end{claimproof}

From these two claims, it becomes clear why all edges of $\PTiling$ satisfy~(\ref{eq:edges}): each of them is either a boundary edge, in which case it connects two cyclic intervals so the result follows trivially, or it is an edge that separates a white clique from a black clique, and thus the intersection and the union of the vertex labels belong to $\WS$ by the above claims.

So all edges of $\PTiling(\WS^\ipar)$ satisfy~(\ref{eq:edges}). Add all the other edges given by (\ref{eq:edges}) to it to get a new tiling which we denote $\TPTiling'$. It is clear that these new edges subdivide the polygons of $\PTiling(\WS^\ipar)$ into smaller polygons and do not intersect each other or the edges of $\PTiling(\WS^\ipar)$. 

Let $A:=\{a_1<a_2<\dots<a_r\}\subset [n]$ and put $a_{r+1}:=a_1$. Suppose $S$ is an $i-1$-element subset of $[n]-A$ such that:
\begin{itemize}
 \item $\{Sa_1,\dots,Sa_r\}\subset \WhCl(S)\subset \WS^\ipar$;
 \item $\{Sa_1,\dots,Sa_r\}$ are the vertex labels of a (white) polygon $P$ of $\TPTiling'$;
 \item $(Sa_i,Sa_{i+1})$ is an edge of $P$ for every $i=1,2,\dots,r$;
 \item $P$ has no other edges inside.
\end{itemize}

We assume that $r>3$ and we would like to see why the polygon $P$ actually has to have an edge inside. 

We are going to start with removing all the irrelevant elements. Define a new collection $\WS'\subset 2^{[r]}$ as follows:
\[\WS':=\{T'\subset [r]\mid S\cup \{a_j\}_{j\in T'}\in\WS\}.\]
\begin{claim}
  The collection $\WS'\subset 2^{[r]}$ is a maximal \emph{by inclusion} chord separated collection. 
\end{claim}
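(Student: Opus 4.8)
The plan is to verify the two defining properties of $\WS'$ separately: that it is chord separated, and that it is maximal by inclusion. The first is essentially formal. The relabeling $j\mapsto a_j$ is strictly increasing from $[r]$ into $[n]$, so it sends cyclically ordered quadruples to cyclically ordered quadruples and vice versa. For two sets $T_1',T_2'\in\WS'$, the sets $U_1=S\cup\{a_j\}_{j\in T_1'}$ and $U_2=S\cup\{a_j\}_{j\in T_2'}$ lie in $\WS$, and since $S$ cancels we have $U_1\triangle U_2\subset A$. Hence a cyclic quadruple witnessing that $T_1',T_2'$ are not chord separated in $[r]$ would map, under $j\mapsto a_j$, to a cyclic quadruple witnessing that $U_1,U_2$ are not chord separated in $[n]$, which is impossible. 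So $\WS'$ is chord separated (in fact chord separation is equivalent on the two sides for sets of this special form).

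For maximality I would argue by pulling an ``addable'' set back to $\WS$. Suppose $T_0'\subset[r]$ is chord separated from every element of $\WS'$; set $U_0:=S\cup\{a_j\}_{j\in T_0'}$. If I can show that $U_0$ is chord separated from every $V\in\WS$, then maximality of $\WS$ forces $U_0\in\WS$, i.e.\ $T_0'\in\WS'$, proving $\WS'$ is maximal. So fix $V\in\WS$ and suppose, for contradiction, there is a cyclically ordered quadruple $\alpha,\beta,\gamma,\delta$ with $\alpha,\gamma\in U_0-V$ and $\beta,\delta\in V-U_0$ (the reverse labeling is symmetric). Since $U_0-V=(S-V)\cup\{a_j:j\in T_0',\,a_j\notin V\}$ and $V-U_0\subset V-S$, I first dispose of the easy configurations: if $\alpha,\gamma\in S-V$ this contradicts chord separation of $S$ and $V$ (recall $S\in\WS$ by the preceding claim); and if exactly one of $\alpha,\gamma$ equals some $a_p$ with the other in $S-V$, then replacing $S$ by $Sa_p\in\WS$ (a white-clique vertex) yields $a_p,\gamma\in Sa_p-V$ and $\beta,\delta\in V-Sa_p$, contradicting chord separation of $Sa_p$ and $V$. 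Thus the only surviving case is $\alpha=a_p,\ \gamma=a_{p'}$ with $p,p'\in T_0'$ and $a_p,a_{p'}\notin V$.

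The remaining case, where the $U_0$-side of the obstruction lies entirely in $A$ but $\beta,\delta$ may lie outside $S\cup A$, is the main obstacle, and I would resolve it by \emph{pushing the obstruction into $A$}. Writing $I_1$ for the open cyclic arc from $a_p$ to $a_{p'}$ containing $\beta$ and $I_2$ for the complementary arc (containing $\delta$), I locate the two cyclically consecutive elements $a_m<\beta<a_{m+1}$ of $A$ flanking $\beta$. Because $(\v_{Sa_m},\v_{Sa_{m+1}})$ is a boundary edge of the clique polygon, the set $Sa_ma_{m+1}$ lies in $\WS$ (it is the apex of the adjacent black clique, cf.\ Corollary~\ref{cor:lifting}); comparing it with $V$ and using that $(a_m,a_{m+1})$ contains no other element of $A$, chord separation of $V$ and $Sa_ma_{m+1}$ forces at least one of $a_m,a_{m+1}$ into $V$, necessarily inside $I_1$ since $a_p\notin V$. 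Applying the same argument to $\delta$ produces an element of $A\cap V$ inside $I_2$. This converts the obstruction into a cyclic quadruple lying entirely in $A$, with $\{a_p,a_{p'}\}$ on the $U_0$-side and two elements of $A\cap V$ on the $V$-side; translating back through $j\mapsto a_j$ and using the hypothesis that $T_0'$ is chord separated from the trace of $V$ in $\WS'$ gives the desired contradiction. The delicate points to pin down are exactly these push-in steps: confirming that the flanking consecutive pairs give sets of $\WS$, that the produced elements avoid $T_0'$, and that the resulting in-$A$ quadruple indeed conflicts with an element of $\WS'$ rather than merely with $V$ itself.
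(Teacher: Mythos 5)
Your overall strategy is the same as the paper's: dispose of the cases where the obstruction meets $S$, reduce to a violating quadruple whose $U_0$-side lies in $A$, push the $V$-side into $A$ using the consecutive sets $Sa_ma_{m+1}\in\WS$, and then project to $[r]$. The chord-separation half is correct and is exactly the paper's (one-line) argument. But the maximality half has a genuine gap, and it is precisely the one you flag at the end: the in-$A$ quadruple you produce witnesses that $T_0'$ fails to be chord separated from $\proj(V):=\{j\in[r]\mid a_j\in V\}$, and your hypothesis only says that $T_0'$ is chord separated from every element of $\WS'$. The set $\proj(V)$ belongs to $\WS'$ only if $S\cup\{a_j\mid a_j\in V\}$ itself lies in $\WS$, which is not given for an arbitrary $V\in\WS$ containing $S$ --- indeed, the statement that every such projection lands in $\WS'$ is essentially the equality $\WS'=\WS''$ that the paper derives as a \emph{consequence} of the claim, so assuming it here would be circular. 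The paper's missing ingredient is the auxiliary collection $\WS'':=\{\proj(R)\mid S\subset R\in\WS\}$: one observes that $\WS''$ is chord separated and contains $\WS'$, and proves the implication ``$T'$ chord separated from all of $\WS''$ implies $T'\in\WS'$''. Applying this implication to the elements of $\WS''$ themselves gives $\WS''\subset\WS'$, hence $\WS'=\WS''$, and then the implication \emph{becomes} the desired maximality statement. Your direct route cannot close without this (or an equivalent) device.

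Two smaller points. First, for $\proj(V)$ to be a legitimate comparison set you also need $S\subset V$; you never establish this (your opening reduction only rules out two interleaved points of $S-V$, not all of $S-V$), whereas the paper proves $S\subset R$ as a separate step using the sets $Sb,Sd\in\WS$. Second, the issue you raise about ``the produced elements avoiding $T_0'$'' is real: the element of $\{a_m,a_{m+1}\}$ forced into $V$ may have its index in $T_0'$, in which case it lies in $U_0\cap V$ and is useless as a witness; the paper's corresponding replacement step asserts the analogous membership without comment, so this is not a point on which your write-up falls short of the paper, but it does remain unresolved in your proposal. In summary: same skeleton as the paper, honest about its holes, but missing the one structural idea ($\WS''$) that makes the final contradiction land on a set you are actually entitled to compare with $T_0'$.
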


\begin{claimproof}
For each set $R\in\WS$ containing $S$, define $\proj(R)\subset [r]$ by
\[\proj(R):=\{j\in[r]\mid a_j\in R\}.\]
Now we define another collection of sets
\[\WS'':=\{\proj(R)\mid S\subset R\text{ and }  R\in\WS\}.\]

It is clear that $\WS'$ and $\WS''$ are chord separated and $\WS'\subset\WS''$. We are going to show that if a set $T'\subset [r]$ is chord separated from all $\WS''$ then $T'\in\WS'$. If we manage to do this, it will follow that $\WS'=\WS''$ are both maximal \emph{by inclusion} chord separated collections.

Suppose that there exists a set $T'\subset [r]$ which is chord separated from all elements of $\WS''$ but $T:=S\cup \{a_j\}_{j\in T'}$ is not chord separated from at least one set $R\in\WS$. Let $a,b,c,d$ be cyclically ordered elements of $[n]$ such that $a,c\in R-T$ and $b,d\in T-R$ . Our ultimate goal is to show that $S\subset R$ and that $\proj(R)$ and $T'$ are not chord separated. 

Since $T\supset S$, we have $a,c,\not\in S$. Next, $b,d\in A$: otherwise, if at least one of them, say, $b$, is not in $A$ (and therefore is in $S$ because $S\subset T\subset S\cup A$) then $b,d\in Sd-R$ and $a,c\in R-Sd$ which contradicts the fact that both of them belong to $\WS$. Thus $b,d\in A$, so $Sb,Sd\in\WS$. Now assume that there is an element $s\in S-R$. Then $s\neq a,c$ and so $s$ belongs to one of the circuilar intervals: either $s\in (a,c)$ or $s\in (c,a)$. We know that $b\in (a,c)$ and $d\in (c,a)$ and thus if $s\in (a,c)$ then $Sd$ is not chord separated from $R$, and if $s\in (c,a)$ then $Sb$ is not chord separated from $R$, and in any case we get a contradiction, which means $S\subset R$ and we are half way there. 

If $a,c\in A$ then we are done, because $\proj(R)$ and $T'$ would obviously not be chord separated in this case. Assume that $a\not\in A$ and let $j\in [r]$ be such that $a$ belongs to the cyclic interval $(a_j,a_{j+1})$. Since $b,d\in A$ are strictly between $a$ and $c$, we have that $c\in (a_{j+1},a_j)$. The set $Sa_ja_{j+1}$ belongs to $\WS$, and $c\in R-Sa_ja_{j+1}$, which means that either $a_j$ or $a_{j+1}$ belongs to $R$. Suppose this is $a_j$, and then $a_j\neq b,d$ so we can just replace $a$ with $a_j$ while still preserving the fact that $a,b,c,d$ are cyclically ordered and $a,c\in R-T$ while $b,d\in T-R$. After we do the same with $c$, we get that $a,c\in A$ and thus $\proj(R)$ and $T'$ are not chord separated, which finishes the proof of the claim.
\end{claimproof}

Our situation looks as follows. We have a maximal \emph{by inclusion} chord separated collection $\WS'\subset 2^{[r]}$. It clearly contains all the cyclic intervals, including all the one-element sets. In order to complete the proof of Lemma~\ref{lemma:triangulation}, we need to show that $\WS'$ contains at least one non-trivial two-element set, where \emph{non-trivial} means that it is not a cyclic interval.

For a set $T\subset [r]$, define 
\begin{equation}\label{eq:maxgap}
 \maxgap(T):=\max_{(a,b)\cap T=\emptyset} |(a,b)|.
\end{equation}

In other words, $\maxgap(T)$ is the maximal size of the distance between two consecutive entries of $T$. Choose a set $T\in\WS'$ satisfying:
\begin{itemize}
 \item $T$ is non-trivial;
 \item there is no other non-trivial $T'\in\WS'$ with $\maxgap(T')>\maxgap(T)$.
\end{itemize}
Let $(a,b)$ be an interval that maximizes the right hand side of~(\ref{eq:maxgap}). Then $a,b\in T$. Since $T$ is non-trivial, it is not the case that $T=[b,a]$ and thus there is an element $c\in (b,a)-T$. Consider all pairs $(T',c')$ satisying
\begin{itemize}
\item $T'\in\WS'$;
 \item $a,b\in T'$;
 \item $(a,b)\cap T'=\emptyset$;
 \item $c'\in (b,a)-T'$.
\end{itemize}
Among all such pairs $(T',c')$, choose the one which minimizes the size of $(b,c')$. Thus $[b,c')\subset T'$ but $c'\not\in T'$. Let $d\in T'$ be such that $(c',d)\cap T'=\emptyset$. Consider the set $\{d,c'-1\}$. Certainly this is a non-trivial subset of $[r]$, because $(c'-1,d)$ contains $c'$ while $(d,c'-1)$ contains either $a$ or $b$. Thus if $\{d,c'-1\}\in \WS'$ then we are done. Assume that this is not the case, then there is a set $R\subset [r]$ such that $d,c'-1\not\in R$ and $e,f\in R$ with $e\in (c'-1,d)$ and $f\in (d,c'-1)$. If $f\not\in T'$ then $R$ and $T'$ are not chord separated. Therefore $(a,b)$ contains no such $f$'s and thus $(a,b)\cap R$ has to be empty. If either $a\not\in R$ or $b\not\in R$ then $\maxgap(R)>\maxgap(T)$ which is impossible since $R$ is non-trivial. Thus $a,b\in R$. But since $c'-1\not\in R$, we get a contradiction with the fact that the size of $(b,c')$ was minimal possible. Thus $\{d,c'-1\}\in\WS'$ and we have found a non-trivial two element subset in $\WS'$. It means that $\WS$ contains the set $Sa_da_{c'-1}$ as well as $S,Sa_d,Sa_{c'-1}$ which means that we have an edge that subdivides the white polygon $P$ into two smaller polygons. This finishes the proof of Lemma~\ref{lemma:triangulation}.
\end{proof}

We have proven both lemmas and therefore we are done with Proposition~\ref{prop:families_collections} as well as Theorem~\ref{thm:purity_positive}. 
\end{proof}

\bibliographystyle{plain}
\bibliography{chord_separation}

\end{document}